\documentclass[]{article}

\usepackage{amsfonts,amsmath,amssymb,amsthm,mathrsfs}
\usepackage[numbers,sort]{natbib}
\usepackage[utf8]{inputenc}
\usepackage[english]{babel}
\usepackage{bbm}
\usepackage{enumitem}
\usepackage{color}
\usepackage{comment}
\usepackage[margin=1.045in]{geometry}
\usepackage{setspace}
\usepackage{scalerel}
\setstretch{1.2}

\usepackage{color}

\usepackage[colorlinks=true,linkcolor=blue,citecolor=blue,urlcolor=blue,pdfborder={0 0 0}]{hyperref}
\usepackage{cleveref}

\crefname{theorem}{Theorem}{Theorems}
\crefname{thm}{Theorem}{Theorems}
\crefname{lemma}{Lemma}{Lemmas}
\crefname{lem}{Lemma}{Lemmas}
\crefname{remark}{Remark}{Remarks}
\crefname{prop}{Proposition}{Propositions}
\crefname{defn}{Definition}{Definitions}
\crefname{corollary}{Corollary}{Corollaries}
\crefname{conjecture}{Conjecture}{Conjectures}
\crefname{question}{Question}{Questions}
\crefname{chapter}{Chapter}{Chapters}
\crefname{section}{Section}{Sections}
\crefname{figure}{Figure}{Figures}

\newtheorem{theorem}{Theorem}

\newtheorem{proposition}[theorem]{Proposition}
\newtheorem{corollary}[theorem]{Corollary}
\newtheorem{lemma}[theorem]{Lemma}
\newtheorem{conjecture}[theorem]{Conjecture}
\theoremstyle{remark}
\newtheorem{remark}[theorem]{Remark}
\newtheorem{definition}[theorem]{Definition}

\numberwithin{theorem}{section}

\DeclareMathOperator{\Z}{\mathbb{Z}}
\DeclareMathOperator{\R}{\mathbb{R}}

\DeclareMathOperator{\N}{\mathbb{N}}

\newcommand{\bbP}{\mathbb{P}}
\newcommand{\bbE}{\mathbb{E}}
\newcommand{\bbG}{\mathbb{G}}
\newcommand{\bbH}{\mathbb{H}}

\newcommand{\E}[1]{\mathbb{E}\left[#1\right]}

\DeclareMathOperator{\pr}{\mathbb{P}}

\newcommand{\ceil}[1]{\lceil#1\rceil}

\newcommand{\Dag}{\stretchrel*{{\dag}}{X}}

\newcommand{\abs}[1]{\lvert#1\rvert}

\newcommand{\sbt}{\scaleobj{0.8}{\bullet}}

\title{Most transient random walks have infinitely many cut times}

\author{Noah Halberstam\thanks{Department of Pure Mathematics and Mathematical Statistics, University of Cambridge. Email: nh448@cam.ac.uk} \and Tom Hutchcroft\thanks{The Division of Physics, Mathematics and Astronomy, California Institute of Technology. Email: t.hutchcroft@caltech.edu}}

\begin{document}

\maketitle

\begin{abstract}

We prove that if $(X_n)_{n\geq 0}$ is a random walk on a transient graph such that the Green's function decays at least polynomially along the random walk, then $(X_n)_{n\geq 0}$ has infinitely many cut times almost surely. This condition applies in particular to any graph of spectral dimension strictly larger than $2$. In fact, our proof applies to general (possibly nonreversible) Markov chains satisfying a similar decay condition for the Green's function that is sharp for birth-death chains. 
We deduce that a conjecture of Diaconis and Freedman (Ann. Probab. 1980)  holds for the same class of Markov chains, and resolve a conjecture of Benjamini, Gurel-Gurevich, and Schramm (Ann. Probab. 2011) on the existence of infinitely many cut times for random walks of positive speed.

\end{abstract}
\section{Introduction}
Let $(x_n)_{n\geq 0}$ be a sequence taking values in some set $\Omega$. A \textbf{cut time} of $(x_n)_{n\geq 1}$ is a time $n\in\Z_{\geq 0}$ for which the sets $\{x_i:i\leq n\}$ and $\{x_i:i>n\}$ are disjoint.
The study of cut times of random walks was initiated by 
Erd\H{o}s and Taylor in 1960 \cite{MR126299}, who proved lower bounds on the densities of cut times for simple random walks on the integer lattices $\Z^d$ for $d\geq 5$, showing that in this case the \emph{doubly infinite} random walk has a positive density of cut times. The lower dimensional cases $d=3,4$ are more complicated, with the singly infinite random walk having an infinite, density zero set of cut times and the doubly infinite random walk having no cut times almost surely; see \cite{MR1423466,MR1386294,MR1118560,MR1189088,MR947005,MR1062056,MR1035664} for highlights of the literature and \cite{MR2985195} for an overview.
Extending these results beyond the simple random walk on $\Z^d$, James and Peres \cite{MR1687097} and Blanchere \cite{MR1983173} proved that every centered, finite-range random walk on a transient Cayley graph has infinitely many cut times almost surely; see also also the recent work \cite{dur34690} for a more robust analysis. The proofs of these results rely on 
delicate estimates on the gradient of the Green's function that are not available in more general settings, with the works  \cite{MR1687097,MR1983173} also employing a case analysis of the different possible transitive low-dimensional geometries.

Indeed, while transience is of course a necessary condition for a random walk to have infinitely many cut times, the converse implication quickly breaks down once we leave the transitive setting: James, Lyons and Peres \cite{MR2459947} constructed an example of a  birth-death chain that is transient but has finitely many cut times almost surely (see also \cite{MR2644879}), and 
Benjamini, Gurel-Gurevich, and Schramm \cite{MR2789585} showed that the same behaviour is possible for random walks on bounded degree graphs.
On the other hand, Benjamini, Gurel-Gurevich, and Schramm \cite{MR2789585} also prove
that a graph is transient if and only if the \textit{expected} number of cut times of the random walk is infinite, which suggests that most `non-pathological' transient random walks should indeed have infinitely many cut times. It is also known that the set of edges crossed by a random walk always spans a recurrent graph almost surely \cite{MR2308594,MR4059006}, a property that holds trivially when there are infinitely many cut times.

In this paper, we prove a new, very easily satisfied criterion for a transient Markov chain to have infinitely many cut times almost surely, applying in particular to any Markov chain in which the Green's function decays at least polynomially along a trajectory of the chain.
  Our result demonstrates that most transient Markov chains arising in examples will have infinitely many cut times almost surely, and, in particular, provides a simple and unified treatment of the transitive locally finite case.

We now state our main theorem.
Let $M=(\Omega,P)$ be an irreducible Markov chain consisting of countable state space $\Omega$ and transition kernel $P$. For each $x\in \Omega$, we write $\bbP_x$ and $\bbE_x$ for probabilities and expectations taken with respect to the law of the Markov chain trajectory $(X_n)_{n\geq0}$ started at $x$, and write $\bbG(x,y)$ for the Green's function $\bbG(x,y)= \sum_{n\geq0} P^n(x,y)=\bbE_x \sum_{n\geq0} \mathbbm{1}(X_n=y)$.
We say that a sequence of non-negative numbers $(a_n)_{n\geq 0}$ \textbf{decays at least polynomially} as $n\to\infty$ if there exists a constant $c>0$ and an integer $N$ such that $a_n\leq n^{-c}$ for every $n\geq N$.

\begin{theorem} \label{thm:mainPolyDecay}
	Let $M=(\Omega,P)$ be a countable Markov chain and
	let $(X_n)_{n\geq 0}$ be a trajectory of $M$ started at some state $x\in \Omega$. If there exists a decreasing bijection $\Phi:[0,\infty)\rightarrow (0,1]$ such that 
	\begin{equation} \label{eq:inteq}
	\int_0^1 \frac{1}{u (1\vee \log \Phi^{-1}(u))}\ \mathrm{d}u=\infty \qquad \text{ and } \qquad 	\limsup_{n\to\infty}\frac{\mathbb{G}(X_n,X_m)}{\Phi(n)} < \infty \quad \text{ a.s.\ for every $m \geq 0$}
	\end{equation}
	then the trajectory $(X_n)_{n\geq 0}$ has infinitely many cut times almost surely. In particular, the same conclusion holds if $\mathbb{G}(X_n,X_m)$ decays at least polynomially as $n\to\infty$ for each fixed $m\geq 0$ almost surely.
\end{theorem}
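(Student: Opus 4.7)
The plan is to apply a conditional Borel--Cantelli argument (Lévy's extension of the second Borel--Cantelli lemma) to the cut-time events $C_n = \{n \text{ is a cut time}\}$. Writing $T_n = \{X_0,\ldots,X_n\}$ for the trace of the walk, the Markov property at time $n$ gives
\begin{equation*}
\mathbb{P}(C_n \mid \mathcal{F}_n) = \mathbbm{1}(X_n \notin T_{n-1}) \cdot \psi_n, \qquad \psi_n := \mathbb{P}_{X_n}\bigl(\text{walk from } X_n \text{ avoids } T_n \text{ at all positive times}\bigr).
\end{equation*}
Transience follows from the hypothesis (since $\mathbb{G}(X_n, X_0) \to 0$ a.s.\ forces $\mathbb{G}(X_0,X_0) < \infty$ by irreducibility), so the indicator is almost surely eventually $1$; hence it suffices to show that $\sum_n \psi_n = \infty$ almost surely.

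To lower-bound $\psi_n$, I would start from the hitting identity $\mathbb{G}(x,y) = \mathbbm{1}(x = y) + \mathbb{P}_x(\tau_y^+ < \infty)\mathbb{G}(y,y)$ and the union bound, obtaining $1 - \psi_n \leq \sum_{i \leq n} \mathbb{G}(X_n, X_i)$ up to a small boundary correction. This naive estimate is adequate when the right-hand side is eventually below $1 - \epsilon$, but may be too weak in general --- a sharper, capacity-type bound of the form $\mathbb{P}_{X_n}(\text{hit } T_n \text{ at positive time}) \leq \mathbb{G}(X_n, T_n)/\inf_{y \in T_n} \mathbb{G}(y, T_n)$ improves things when the trace is thick enough that one hit typically leads to many returns. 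The Green's function hypothesis would then be used to bound the numerator via a dyadic decomposition in the time-gap $n - i$: the hypothesis gives $\mathbb{G}(X_n, X_m) \leq C_m \Phi(n)$ for each fixed $m$, which I would upgrade (possibly after replacing $\Phi$ with a slightly larger function still satisfying the integral condition) to a scale-wise uniform bound. Summing the dyadic contributions produces a series whose divergence is governed precisely by $\int_0^1 du/(u \log \Phi^{-1}(u))$, and the assumed divergence of this integral yields $\sum_n \psi_n = \infty$ almost surely.

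The hard part is obtaining a sharp enough lower bound on $\psi_n$ in the regime where the trace is ``thick'' and the naive union bound fails --- for instance, for $\Phi$ near the boundary of the integral condition. This requires the capacity-type sharpening above together with a careful analysis of how the walk interacts with its own past, and is where the sharpness of the condition (tight for birth--death chains) should come into play. A secondary difficulty is upgrading the pointwise-in-$m$ Green's function decay to scale-uniform bounds; I would handle this via a martingale or maximal inequality applied to the process $n \mapsto \mathbb{G}(X_n, X_m)/\Phi(n)$ and a diagonal extraction over $m$.
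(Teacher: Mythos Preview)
Your approach has a genuine structural gap. The hypothesis only controls $\mathbb{G}(X_n,X_m)$ for each \emph{fixed} $m$ as $n\to\infty$; it says nothing about $\mathbb{G}(X_n,X_i)$ when $i$ is close to $n$. In particular, terms like $\mathbb{G}(X_n,X_{n-1})$ are typically bounded away from zero (think of simple random walk on $\mathbb{Z}^d$), so the union bound $1-\psi_n \le \sum_{i\le n}\mathbb{G}(X_n,X_i)$ is of order $n$ and gives no information. Your proposed capacity-type sharpening does not address this: any bound on the probability of hitting the trace $T_n$ must somehow handle the recent portion of the trace, and the hypothesis provides no leverage there. The ``dyadic decomposition in the time-gap $n-i$'' presupposes control of $\mathbb{G}(X_n,X_{n-j})$ as a function of $j$, which you simply do not have. (Separately, the claim that $\mathbbm{1}(X_n\notin T_{n-1})$ is eventually $1$ is false: transient walks typically have a positive density of non-fresh times.)

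The paper sidesteps this entirely by abandoning the attempt to estimate $\mathbb{P}(C_n\mid\mathcal{F}_n)$ directly. Instead it works with the single scalar process $Z_n=\mathbb{H}(X_n,X_0)$ and looks for \emph{permadrops}: times $n$ at which $Z_n$ falls strictly below its running minimum and the process never recovers to the previous minimum. Any permadrop is automatically a cut time, because returning to a past vertex $X_i$ would force $Z=\mathbb{H}(X_i,X_0)\ge Z_{n-1}^*$. This reduces the problem to a question about a one-dimensional supermartingale converging to zero, with no need to control hitting of the whole trace. The analysis then proceeds scale-by-scale over intervals $[e^{-k-1},e^{-k}]$: one proves matching upper and lower bounds on the conditional expected number of ``large'' permadrops on scale $k$ (the upper bound being conditional on there being at least one), and compares them to lower-bound the probability of at least one permadrop per scale. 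The integral condition emerges naturally as the divergence condition in the resulting Borel--Cantelli-type argument across scales. A deterministic lemma is needed to show that, under the decay hypothesis, a positive density of scales have enough ``large'' drops; this is where the room between $\Phi$ and a slightly slower function $\Psi$ is exploited, somewhat in the spirit of your ``replace $\Phi$ with a slightly larger function'' remark, but for an entirely different purpose.
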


We stress that the $\Phi^{-1}(u)$ term appearing in \eqref{eq:inteq} denotes the \emph{inverse} of $\Phi$ rather than its reciprocal.
Note that if the Markov chain is irreducible we can replace the 
decay condition appearing here 
  with the condition that $\limsup_{n\to\infty} \Phi(n)^{-1}\mathbb{G}(X_n,X_0)< \infty$  a.s.

\begin{remark}
Theorem \ref{thm:mainPolyDecay} applies to some decay rates that are slightly slower than polynomial, such as that given by
$\Phi(n) = \exp({-\frac{\log n}{\log\log n}})$.
In \cref{sec:sharp}, we discuss how the results of Cs\'aki, F\"oldes, and R\'ev\'esz~\cite{MR2644879} imply that the integral condition of \cref{thm:mainPolyDecay} is sharp for birth-death chains and hence cannot be improved in general.
\end{remark}

Theorem \ref{thm:mainPolyDecay} easily implies various sufficient conditions for a Markov chain trajectory to have infinitely many cut times almost surely. One particularly simple such condition is as follows.

\begin{corollary}
\label{cor:heatkernel}
Let $M=(\Omega,P)$ be a countable Markov chain and
	let $X=(X_n)_{n\geq 0}$ be a trajectory of $M$ started at some state $x\in \Omega$. If for each $y\in \Omega$ there exist constants $C=C_{xy}<\infty$ and $d=d_{xy}>2$ such that $P^n(x,y) \leq C n^{-d/2}$ for every $n\geq 1$, then $X$ has infinitely many cut times almost surely.
\end{corollary}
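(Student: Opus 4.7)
The plan is to verify the ``in particular'' hypothesis of Theorem~\ref{thm:mainPolyDecay}, which requires showing that $\mathbb{G}(X_n, X_m)$ decays at least polynomially in $n$ almost surely for each fixed $m \geq 0$. The heat kernel decay immediately gives $\mathbb{G}(x,y) = \sum_n P^n(x,y) < \infty$ for every $y$, and hence $\mathbb{G}(y,y) = \mathbb{G}(x,y)/\mathbb{P}_x(\text{hit } y) < \infty$ whenever $y$ is reachable from $x$; in particular this applies to every state visited along the trajectory.

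I would first fix a reachable target state $y \in \Omega$ and show that $\mathbb{G}(X_n,y)$ decays polynomially in $n$ almost surely. The natural object is the hitting function $h_y(z) := \mathbb{P}_z(\exists k \geq 0 : X_k = y) = \mathbb{G}(z,y)/\mathbb{G}(y,y)$, which is $P$-harmonic on $\Omega \setminus \{y\}$ and satisfies $Ph_y(y) \leq h_y(y) = 1$, so that $(h_y(X_n))_{n \geq 0}$ is a non-negative supermartingale. By Fubini and the heat kernel bound,
\[
\mathbb{E}[h_y(X_n)] = \frac{1}{\mathbb{G}(y,y)} \sum_{k \geq n} P^k(x,y) \leq \frac{C_{xy}}{\mathbb{G}(y,y)} \sum_{k \geq n} k^{-d_{xy}/2} \lesssim n^{-\alpha_y}, \qquad \alpha_y := \tfrac{d_{xy}}{2} - 1 > 0.
\]

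The central step --- and the main obstacle --- is upgrading this bound in expectation to almost sure polynomial decay. The difficulty is that $\mathbb{G}(X_n, y)$ itself is not monotone in $n$, so a naive Borel--Cantelli argument along a dyadic subsequence does not extend to all $n$. I would therefore apply Doob's maximal inequality for non-negative supermartingales to the tail suprema, yielding $\mathbb{P}(\sup_{k \geq n} h_y(X_k) \geq \lambda) \leq \mathbb{E}[h_y(X_n)]/\lambda$. Choosing $n = 2^j$ and $\lambda = 2^{-j\alpha_y/2}$ makes the right-hand side summable in $j$, and Borel--Cantelli then gives $\sup_{k \geq 2^j} h_y(X_k) \leq 2^{-j\alpha_y/2}$ for all large $j$ almost surely. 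Monotonicity of the supremum now permits interpolation between consecutive dyadic times, yielding $\mathbb{G}(X_n, y) = O(n^{-\alpha_y/2})$ almost surely.

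Since $\Omega$ is countable, this almost sure polynomial decay holds simultaneously for every $y \in \Omega$ on a single probability-one event. Specialising to the random target $y = X_m$ for each fixed $m \geq 0$ yields the required polynomial decay of $\mathbb{G}(X_n, X_m)$, and Theorem~\ref{thm:mainPolyDecay} completes the proof.
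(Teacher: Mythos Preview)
Your proposal is correct and follows essentially the same route as the paper: bound $\mathbb{E}[\mathbb{G}(X_n,y)]$ via the tail sum of the heat kernel, then use the supermartingale property of $\mathbb{G}(X_\cdot,y)$ together with Borel--Cantelli along dyadic times to upgrade to almost sure polynomial decay, and finally invoke Theorem~\ref{thm:mainPolyDecay}. The only cosmetic difference is that you package the supermartingale step as a single application of Doob's maximal inequality to the tail supremum, whereas the paper does it in two Borel--Cantelli passes (first Markov's inequality at times $2^k$, then optional stopping to control the excursions above $k^4 2^{-(d-2)k/2}$ after time $2^k$); your version is a touch cleaner at the cost of halving the decay exponent, which is irrelevant for the application.
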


Note that if $M$ is irreducible then the hypothesis of this corollary is equivalent to the on-diagonal heat kernel estimate $P^n(x,x)=O(n^{-d/2})$ holding for some $d>2$; for graphs, this is (by definition) equivalent to the \emph{spectral dimension} of the graph being strictly larger than $2$. As such, Corollary \ref{cor:heatkernel} is already sufficient to treat most natural examples of transient graphs arising in examples.

\begin{proof}[Proof of Corollary \ref{cor:heatkernel} given Theorem \ref{thm:mainPolyDecay}]
Fix $x,y\in \Omega$ and suppose that $C<\infty$ and $d>2$ are such that $P^n(x,y)\leq Cn^{-d/2}$ for every $n\geq 1$. We have by the Markov property that
\[
\bbE_x\left[\mathbb{G}(X_n,y)\right] = \bbE_x \#\{\text{visits to $y$ after time $n$}\} = \sum_{m=n}^\infty P^m(x,y) \leq C \sum_{m=n}^\infty m^{-d/2} \leq \frac{2C}{d-2} n^{-(d-2)/2}
\]
for every $n\geq 1$, and hence by Borel-Cantelli that 
\[
\bbG(X_{2^k},y) \leq k^2 2^{-(d-2)k/2} \qquad \text{ for all sufficiently large $k$ almost surely.}
\]
If $\tau$ denotes the first time after time $2^k$ that $X$ hits $y$ then the stopped process $(\mathbb{G}(X_m,y))_{m=2^k}^\tau$ is a non-negative martingale, and it follows by the optional stopping theorem that
\[
\bbP_x\left(\text{there exists } m\geq 2^k \text{ such that } \bbG(X_m,y) \geq k^4 2^{-(d-2)k/2} \mid \bbG(X_{2^n},y) \leq k^2 2^{-(d-2)k/2}\right) \leq \frac{1}{k^2}
\]
for all sufficiently large $k$. Thus, a further application of Borel-Cantelli yields that 
\begin{equation}
\label{eq:greendecayfromheatkernel}
\bbG(X_n,y) \leq (\log_2 n)^4 \left(\frac{n}{2}\right)^{-(d-2)/2}
\end{equation}
for all sufficiently large $n$ almost surely. Since $y$ was arbitrary, the hypotheses of Theorem \ref{thm:mainPolyDecay} are satisfied and $X$ has infinitely many cut times almost surely.
\end{proof}

As mentioned above, earlier results concerning random walks on groups relied on relatively fine control of the Green's function and its gradient, which was used to prove the existence of infinitely many cut times via a second moment argument.
The far weaker and more distributed nature of our decay hypothesis causes this second moment argument to break down. Instead, we compare expectations and conditional expectations of certain special types of cut times as the process $(\bbG(X_n,X_0))_{n\geq 0}$ crosses a small exponential scale $[e^{-k-1},e^{-k}]$. Roughly speaking, this allows us to integrate all of the available information across time, compensating for the looser information. See Section \ref{sec:GC} for details.

\medskip

\textbf{Superdiffusive random walks have infinitely many cut times. }
As an application of \cref{thm:mainPolyDecay}, we also prove that walks on graphs and networks (i.e.\ reversible Markov chains) satisfying a weak superdiffusivity condition have infinitely many cut times almost surely. Given a network $N=(V,E,c)$ with underlying graph $(V,E)$ and conductances $c:E\rightarrow (0,\infty)$, we define the conductance $c(v)$ of a vertex $v$ to be the total conductance of all oriented edges emanating from $v$.
\begin{theorem} \label{thm:speedTheorem}
	Let $N=(V,E,c)$ be a locally finite, connected network with $\inf_v c(v)>0$ and let $X$ be a random walk on $N$.
	 If there exists $r>3/2$ such that
	 \begin{equation}
     \label{eq:superdiffusivity}
\liminf_{n\to\infty} \frac{d(X_0,X_n)}{n^{1/2}(\log n)^r}>0 \qquad \text{ almost surely},
\end{equation}
then $X$ has infinitely many cut times almost surely.
\end{theorem}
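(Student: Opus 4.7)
The plan is to apply \cref{thm:mainPolyDecay} after establishing almost sure decay of $\mathbb{G}(X_n, X_0)$ along the trajectory at a rate satisfying the integral condition \eqref{eq:inteq}. The main analytical tools are the Carne--Varopoulos inequality
\[
P^k(x,y) \;\leq\; 2\sqrt{c(y)/c(x)}\,\exp\!\left(-d(x,y)^2/(2k)\right)
\]
and the superdiffusivity hypothesis \eqref{eq:superdiffusivity}. By the Markov property together with the triangle inequality, the superdiffusive lower bound propagates to the future of the walk: for each fixed $n$ the continuation $(X_{n+m})_{m \geq 0}$ is itself superdiffusive from $X_n$ almost surely, so we may apply \eqref{eq:superdiffusivity} both to the walk at time $n$ and to any fresh walk from $X_n$.

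Fix $n$ large and write $L = d(X_n, X_0)$, which by hypothesis is at least $c\, n^{1/2}(\log n)^r$ almost surely for $n$ large. Choose a cutoff $T$ of order $L^2/(\log L)^{2r}$, calibrated so that the superdiffusive lower bound for a walk from $X_n$ would force it to lie at distance at least $2^r L$ from $X_n$ by time $T$. Using the strong Markov property, write
\[
\mathbb{G}(X_n, X_0) \;=\; \sum_{k \leq T} P^k(X_n, X_0) \;+\; \mathbb{E}_{X_n}\!\left[\mathbb{G}(Y_T, X_0)\right],
\]
where $Y$ denotes a fresh walk from $X_n$. Carne--Varopoulos bounds each summand in the first term by $\exp(-c'(\log L)^{2r})$, and since $T$ grows only polynomially in $L$, the first term decays faster than any polynomial in $n$ as soon as $r > 1/2$. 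For the second term, on the almost sure event that the superdiffusive lower bound for $Y$ has taken effect by time $T$, the triangle inequality forces $d(Y_T, X_0) \geq (2^r - 1) L$, so iterating the decomposition yields a geometric growth of the relevant distance and a super-polynomial decay of $\mathbb{G}(X_n, X_0)$ in $n$. Such decay is more than enough to verify the integral condition of \cref{thm:mainPolyDecay} (for instance with $\Phi(n) = \exp(-(\log n)^{1+\varepsilon})$), which then gives infinitely many cut times almost surely.

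The principal obstacle is controlling the error arising from the random time at which the superdiffusivity of a fresh walk from $X_n$ first takes effect, since the hypothesis \eqref{eq:superdiffusivity} is purely qualitative and gives no a priori rate on this kick-in time. The strict inequality $r > 3/2$, beyond the $r > 1/2$ that would suffice for the Carne--Varopoulos contribution in isolation, provides exactly the margin needed to extract a quantitative tail estimate via a Borel--Cantelli-type argument and to close the iteration across scales.
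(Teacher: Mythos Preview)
Your decomposition
\[
\mathbb{G}(X_n,X_0)=\sum_{k\leq T}P^k(X_n,X_0)+\mathbb{E}_{X_n}\bigl[\mathbb{G}(Y_T,X_0)\bigr]
\]
is fine, and the Carne--Varopoulos bound on the first sum is routine. The gap is in the second term, and it is not just a matter of tightening an error estimate: the iteration cannot be closed from the hypotheses you have. The quantity $\mathbb{E}_{X_n}[\mathbb{G}(Y_T,X_0)]$ is an expectation over a \emph{fresh} walk from $X_n$, and the hypothesis \eqref{eq:superdiffusivity} says nothing about the law of such a walk; it speaks only about the specific trajectory $X$ started at $X_0$. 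Even if one argues (or assumes) that a walk from $X_n$ is again superdiffusive, the hypothesis is qualitative, so the probability of the ``bad'' event that $Y$ has not yet escaped by time $T$ carries no uniform rate in the starting vertex. On that bad event the only bound available is the trivial $\mathbb{G}(Y_T,X_0)\leq\mathbb{G}(X_0,X_0)$, so the expectation can be dominated by a non-decaying term times an uncontrolled probability, and the geometric iteration collapses. Your final paragraph identifies exactly this obstacle but then asserts that $r>3/2$ ``provides the margin needed'' without saying how; there is no mechanism in your sketch by which $r>3/2$ (as opposed to $r>1/2$) yields a tail bound on the kick-in time of a fresh walk.

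The paper addresses this point explicitly: it is not known how to deduce decay of $\mathbb{G}(X_n,X_0)$ from distance estimates alone, since Carne--Varopoulos gives no control of the large-time contribution $\sum_{m\geq n^2}p_m(X_n,X_0)$, and the authors in fact believe there are positive-speed examples where $\mathbb{G}(X_n,X_0)$ decays arbitrarily slowly. Their remedy is to abandon the unkilled Green's function altogether and introduce a spatially dependent killing $K(x)\asymp c(x)\langle x\rangle^{-2}(\log\langle x\rangle)^\gamma$. The killing is strong enough that $\sup_x p_n^{\dagger}(x,o)$ decays superpolynomially (Lemma~\ref{lem:combbound}), which forces the \emph{killed} Green's function $\mathbb{G}_{\dagger}(X_n^{\dagger},X_0)$ to decay at the same superpolynomial rate as $p_n^{\dagger}$ via Proposition~\ref{prop:p_and_g}; yet it is weak enough that, under \eqref{eq:superdiffusivity}, the walk survives forever with positive conditional probability given its path (Lemma~\ref{lem:feels}). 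Theorem~\ref{thm:mainPolyDecay} then gives infinitely many cut times for the killed walk on the survival event, and this transfers to $X$. The condition $r>3/2$ enters precisely here: it is what allows a choice of $\gamma$ with $2<\gamma<2r-1$, balancing superpolynomial decay of the killed heat kernel against summability of $K(X_n)$ along the trajectory.
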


This result resolves a conjecture of Benjamini, Gurel-Gurevich, and Schramm \cite{MR2789585}, who asked whether random walks on graphs with positive linear $\liminf$ speed have infinitely many cut times almost surely. For \emph{bounded degree} graphs where the walk has positive speed, our proof yields that the walk has a \emph{positive density} of cut times a.s., yielding a very strong version of their conjecture.

\begin{theorem} \label{thm:speedTheorem_density}
	Let $G$ be a bounded degree graph and let $X$ be a random walk on $G$.
	 \begin{equation*}
\text{If} \quad \liminf_{n\to\infty} \frac{1}{n}d(X_0,X_n)>0 \quad \text{a.s.} \quad \text{then} \quad \liminf_{n\to\infty}\frac{1}{n}\#\{0\leq m \leq n: m \text{ is a cut time for $X$}\} >0 \quad \text{a.s.}
\end{equation*}
\end{theorem}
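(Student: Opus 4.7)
My plan is to strengthen the conclusion of Theorem \ref{thm:mainPolyDecay} from infinitely many cut times to positive density by using the positive-speed hypothesis to obtain \emph{exponential}, rather than merely polynomial, decay of the Green's function along the walk, and then re-examining the scale-crossing argument sketched after Theorem \ref{thm:mainPolyDecay} in quantitative form.

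The first step is to show that there exist deterministic constants $c, C > 0$ with $\mathbb{G}(X_n, X_m) \leq C e^{-c(n-m)}$ for all sufficiently large $n-m$ almost surely. This would follow from exponential concentration of $d(X_0, X_k)$ around its mean: since $d(X_0, \cdot)$ is $1$-Lipschitz on the bounded-degree graph, the Doob martingale of $d(X_0, X_k)$ with respect to $(\mathcal{F}_j)_{j \leq k}$ has bounded increments, and Azuma--Hoeffding gives Gaussian concentration of $d(X_0, X_k)$ around $\mathbb{E}[d(X_0, X_k)]$. The liminf positive-speed assumption, combined with reverse Fatou applied to the bounded sequence $d(X_0, X_k)/k \in [0,1]$, yields $\mathbb{E}[d(X_0, X_k)] \geq \alpha k - o(k)$ for some deterministic $\alpha > 0$, so Azuma gives $\mathbb{P}(d(X_0, X_k) < \alpha k/2) \leq e^{-\beta k}$. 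Applying this estimate to the walk reversed starting from $X_n$ (via the Markov property) and summing gives an exponentially small upper bound on the probability that the walk from $X_n$ hits $X_m$, hence the claimed exponential Green's-function decay.

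With exponential Green's-function decay in hand, I would revisit the scale-crossing proof of Theorem \ref{thm:mainPolyDecay}. That proof produces at least one cut time each time the process $\mathbb{G}(X_n, X_0)$ descends through an exponential window $[e^{-k-1}, e^{-k}]$. Under exponential decay, scale $k$ is crossed by time $n \asymp k/c$, so scales are traversed at a positive linear rate in time, and the construction now yields at least one cut time in each time interval of some fixed deterministic length, giving positive expected density. To upgrade expected density to almost-sure positive density, I would combine this with a block argument: partition time into blocks of large but fixed length $L$, show via the Markov property and the conditional version of the construction that each block contains a cut time of the full walk with conditional probability bounded below by a positive constant (using exponential Green's-function decay to control the probability that the future hits the past), and conclude by the conditional Borel--Cantelli lemma.

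The main obstacle I expect is ensuring that the cut times produced at distinct scales in the scale-crossing construction of Theorem \ref{thm:mainPolyDecay} are genuinely distinct and occur at different times, so that summing their indicators actually yields a density lower bound rather than overcounting. Resolving this likely requires separating scales by a sufficiently large multiplicative gap and then some combinatorial bookkeeping, similar in spirit to the step in the proof of Corollary \ref{cor:heatkernel} that passes from in-expectation estimates to almost-sure estimates via Borel--Cantelli.
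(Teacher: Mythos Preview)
Your Azuma step has a genuine gap. The claim that ``since $d(X_0,\cdot)$ is $1$-Lipschitz, the Doob martingale of $d(X_0,X_k)$ has bounded increments'' is false on general bounded-degree graphs. By the Markov property the Doob martingale is $M_j = (P^{k-j} f)(X_j)$ with $f=d(X_0,\cdot)$, and bounded increments would require $P^{k-j} f$ to be uniformly Lipschitz in $j,k$; but the transition operator on a non-transitive graph need not preserve Lipschitz constants. A concrete obstruction: glue a $3$-regular tree and a $4$-regular tree at a common root $o$. The walk from $o$ has positive liminf speed almost surely, yet $M_1-M_0$ is of order $(c_4-c_3)k$ where $c_3\neq c_4$ are the two speeds. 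The alternative martingale $d(X_0,X_j)-\sum_{i\le j}\mathbb{E}[\xi_i\mid\mathcal{F}_{i-1}]$ does have bounded increments, but Azuma applied to it only gives concentration around the \emph{random} compensator, not around $\mathbb{E}[d(X_0,X_k)]$. A further issue: even granting the concentration bound for the walk from $X_0$, you invoke it for walks started from the random vertex $X_n$, which is a separate claim not implied by the hypothesis.

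The paper sidesteps the problem of controlling the unkilled Green's function entirely. It introduces the spatially-dependent killing of \cref{sec:SSD}: with $K(x)$ a bounded function of $d(o,x)$ chosen so that the walk survives forever with positive conditional probability, the arguments of \cref{lem:combbound} and \cref{prop:p_and_g} (via Varopoulos--Carne) give that the \emph{killed} Green's function $\mathbb{G}_\dagger(X_n,X_0)$ decays exponentially, with no concentration input required. The endgame is then much simpler than your scale-crossing plan: bounded degree and bounded $K$ give $\mathbb{G}_\dagger(X_{n+1},X_0)\ge c\,\mathbb{G}_\dagger(X_n,X_0)$ deterministically, so exponential decay forces a positive density of times $n$ at which $Z_{n+1}\le a Z_n^*$ for some $a<1$. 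At each such time the conditional probability of a permadrop given $\mathcal{F}_n$ is at least $1-a$ by optional stopping (exactly as in \cref{prop:expDeccase}), and a conditional law of large numbers finishes. You should look for this one-step lower bound on the Green's function ratio; it replaces both your scale-crossing bookkeeping and your block argument.
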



Note that Theorem \ref{thm:speedTheorem} is not an immediate consequence of Theorem \ref{thm:mainPolyDecay}, as we are not aware of any general result allowing us to deduce Green's function decay estimates from distance estimates without further assumptions on the graph: the Varopoulos-Carne inequality \cite{MR822826,MR837740} tells us that $p_m(X_n,X_0)$ is small when $d(X_0,X_n)$ is much larger than $m^{1/2}$, but does not give any control whatsoever of the large-time contribution to the Green's function $\sum_{m \geq n^2} p_m(X_n,X_0)$.
 To circumvent this obstacle, we consider adding a spatially-dependent killing to our network. We tune the rate of killing to be weak enough that the walk has a positive chance to live forever when superdiffusive, and strong enough that we can control the decay of the killed Green's function along the walk. We prove that this killed walk has infinitely many cut times almost surely on the event that it survives forever, from which Theorem \ref{thm:speedTheorem} easily follows.

\medskip
\textbf{The Diaconis-Freedman conjecture.} Let $M=(\Omega,P)$ be a transient Markov chain, and let $X=(X_n)_{n\geq 0}$ be a trajectory of $M$.  
The \textbf{partially exchangeable $\sigma$-algebra} of $X$ is defined to be the exchangeable $\sigma$-algebra generated by the sequence of increments $((X_n,X_{n+1}))_{n\geq 0}$, that is, the set of events that are determined by the sequence of increments and that are invariant under permutations of this sequence that fix all but finitely many terms. This $\sigma$-algebra arises naturally in the work of Diaconis and Freedman \cite{MR556418}, who proved that every partially exchangeable sequence of random variables can be expressed as a Markov process in a random environment. This can be thought of as a partially-exchangeable version of de Finetti's theorem and plays an important role in the theory of reinforced random walks \cite{MR3189433,MR2441859}. Their study of the partially exchangeable $\sigma$-algebra led Diaconis and Freedman to make the following conjecture. Given a trajectory $X=(X_n)_{n\geq 0}$, we define  the \textbf{crossing number} of an ordered pair of states $(x,y)$ to be the number of integers $n$ such that $(X_n,X_{n+1})=(x,y)$.

\begin{conjecture}[Diaconis-Freedman 1980]
Let $X$ be a trajectory of a transient Markov chain. Then the partially exchangeable $\sigma$-algebra of $X$ is generated by the crossing numbers of $X$.
\end{conjecture}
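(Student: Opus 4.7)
The inclusion $\mathcal{C} \subseteq \mathcal{E}$, where $\mathcal{C}$ denotes the $\sigma$-algebra generated by the crossing numbers and $\mathcal{E}$ the partially exchangeable $\sigma$-algebra, is immediate: each $c(x,y)$ is a function of the multiset of increments and is therefore invariant under every finitely-supported permutation of the increment sequence. The substance of the proof is the reverse inclusion $\mathcal{E} \subseteq \mathcal{C}$, which I would establish under the infinitely-many-cut-times condition guaranteed by \cref{thm:mainPolyDecay} --- that is, for the class of Markov chains for which this paper resolves the conjecture.

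The starting point is the following Markov identity: for two finite sequences $(x_0,\ldots,x_n)$ and $(y_0,\ldots,y_n)$ starting at the same state and carrying the same multiset of consecutive pairs, the probabilities $\bbP_{x_0}(X_0=x_0,\ldots,X_n=x_n)$ and $\bbP_{x_0}(X_0=y_0,\ldots,X_n=y_n)$ coincide, since both factor as the same product of transition probabilities. Combined with transience (which ensures each $c(x,y)$ is a.s.\ finite), this shows that the finite-window distribution of the trajectory conditional on the crossing numbers is uniform over the ``Eulerian paths'' in the edge-multigraph with edge multiplicities $c(x,y)$, or at least, is invariant under any finite permutation of increments that remains a valid walk.

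To convert this into a statement about $\mathcal{E}$, I would exploit the cut-time decomposition. Enumerating the cut times $T_0 < T_1 < \cdots$ furnished by \cref{thm:mainPolyDecay}, write the trajectory as a concatenation of excursions $E_k = (X_{T_k},\ldots,X_{T_{k+1}})$ whose vertex sets are pairwise disjoint except at the shared cut-vertex endpoints. The key combinatorial claim is that if two trajectories have the same crossing numbers and differ only in the order or internal structure of finitely many excursions, then they differ by a finite permutation of their increment sequences, so every $\mathcal{E}$-measurable function takes the same value on both. This reduces matters to showing that the tail $\sigma$-algebra of the excursion process lies in $\mathcal{C}$ modulo null sets.

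The final step is a Hewitt--Savage-type $0$--$1$ argument: by the strong Markov property at cut times, the excursion process is itself a Markov chain, and its tail $\sigma$-algebra is almost trivial once one conditions on the crossing-number statistics, which record the multiset of excursion types. The principal obstacle is the combinatorial claim of the previous paragraph --- producing, for two trajectories with identical crossing numbers that agree past some cut time, an explicit finite sequence of increment swaps transforming one into the other. This is where the disjoint-vertex-set structure of the cut-time decomposition is essential, and where the infinitely-many-cut-times hypothesis is used most crucially.
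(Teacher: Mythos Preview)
The paper does not prove this statement: it is presented as an open \emph{conjecture}, not a theorem. The paper's only contribution in this direction is the corollary that the conjecture holds under the hypotheses of Theorem~\ref{thm:mainPolyDecay} or Corollary~\ref{cor:heatkernel}, and even that is not argued in the text --- it follows by combining the main results with the observation, attributed to James and Peres \cite{MR1687097} and stated without proof, that the Diaconis--Freedman conjecture holds whenever $X$ has infinitely many cut times almost surely. There is therefore no proof in the paper against which to compare your proposal; you are attempting to supply the argument the paper merely cites.

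That said, your sketch has a genuine gap. You assert that ``by the strong Markov property at cut times, the excursion process is itself a Markov chain,'' but cut times are \emph{not} stopping times: whether $n$ is a cut time depends on the entire future $(X_m)_{m>n}$, so the strong Markov property is unavailable at the $T_k$, and the excursions between successive cut times need not form a Markov chain in any direct sense. Your Hewitt--Savage-type tail-triviality step rests on exactly this structure, so the argument as written does not close. A workable route instead conditions on the crossing numbers from the outset: the directed multigraph of traversed edges --- and in particular the sequence of cut vertices and the decomposition of the path into excursions between them --- is measurable with respect to the crossing numbers, and the conditional randomness that remains consists of independent uniform choices of Eulerian paths within each excursion block, for which triviality of the exchangeable $\sigma$-algebra given $\mathcal{C}$ is immediate.
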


An equivalent statement of this conjecture is that if we condition on the crossing numbers then the resulting process has trivial exchangeable $\sigma$-algebra almost surely.
Note that there is a close analogy between this conjecture and the problem of computing the Poisson boundary for lamplighter random walks~\cite{MR4228277,MR704539}.
As observed in \cite{MR1687097}, it is easily seen that the Diaconis-Freedman conjecture holds whenever $X$ has infinitely many cut times almost surely. 
As such, our main results imply that the Diaconis-Freedman conjecture holds for most transient Markov chains arising in examples.

\begin{corollary}
		Let $M=(\Omega,P)$ be an irreducible transient Markov chain with trajectory $(X_n)_{n\geq 0}$. If $(M,x)$ satisfies the hypotheses of either Theorem \ref{thm:mainPolyDecay} or Corollary \ref{cor:heatkernel} then the partially exchangeable $\sigma$-algebra of $X$ is generated by its crossing numbers.
\end{corollary}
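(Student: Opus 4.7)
The plan is to split the corollary into two pieces: first obtaining infinitely many cut times from the main results of the paper, and then invoking the observation of James and Peres~\cite{MR1687097} (noted above) that this suffices for the Diaconis--Freedman conjecture. Since Corollary~\ref{cor:heatkernel} is already deduced from Theorem~\ref{thm:mainPolyDecay} in the excerpt, it is enough to assume the hypotheses of Theorem~\ref{thm:mainPolyDecay}; applying that theorem gives directly that the trajectory $X$ has infinitely many cut times almost surely.

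The conceptual core is then to sketch why infinitely many cut times imply the conjecture. Let $\mathcal{E}$ denote the partially exchangeable $\sigma$-algebra of the increment sequence and $\mathcal{C}$ the $\sigma$-algebra generated by crossing numbers; the inclusion $\mathcal{C}\subseteq \mathcal{E}$ is immediate since finite permutations of increments leave crossing numbers fixed. For the reverse inclusion, I would exploit the fact that for the irreducible chain $M$ the likelihood $\prod_{i} P(x_i,x_{i+1})$ of any finite path depends only on its crossing numbers, so that the conditional law of any finite prefix given $\mathcal{C}$ is uniform over valid prefixes with the prescribed partial crossing counts (transience ensures all crossing numbers are a.s.\ finite, so this conditioning is well-posed). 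At a cut time $T$, the pieces $(X_0,\dots,X_T)$ and $(X_T,X_{T+1},\dots)$ share only the vertex $X_T$, so their multisets of increments are disjoint. For any $A\in\mathcal{E}$ and any two cut times $T_j<T_k$, the indicator $\mathbf{1}_A$ is invariant under permutations of increments localized to $[T_j,T_k]$, and a standard Eulerian-path argument on the finite walk $(X_{T_j},\dots,X_{T_k})$ shows that these local rearrangements act transitively on finite walks with the same endpoints and crossing numbers. Letting $j,k$ range over the a.s.\ infinite set of cut times exhausts all finite rearrangements consistent with the global crossing numbers, forcing $\mathbf{1}_A$ to be $\mathcal{C}$-measurable modulo $\mathbb{P}_x$-null sets.

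The main obstacle I anticipate is making this final transitivity statement fully rigorous for the infinite trajectory: one must check that the group generated by the local rearrangements between consecutive cut times acts transitively on trajectories with a given crossing-number profile, and that passing to the $j\to -\infty,\ k\to\infty$ limit along the cut-time sequence actually captures every partially exchangeable event. This is essentially a combinatorial fact about Eulerian excursions once cut times are in hand, which is why the reduction is called ``easily seen'' in~\cite{MR1687097}; the nontrivial analytic input is precisely the existence of infinitely many cut times supplied by Theorem~\ref{thm:mainPolyDecay}.
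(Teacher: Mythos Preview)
Your proposal is correct and follows exactly the approach the paper takes: the paper does not give a standalone proof of this corollary but simply notes (in the text preceding it) that by \cite{MR1687097} the Diaconis--Freedman conjecture holds whenever there are infinitely many cut times, and then cites Theorem~\ref{thm:mainPolyDecay} and Corollary~\ref{cor:heatkernel} for the latter. Your additional sketch of the James--Peres reduction goes beyond what the paper provides; one minor slip is the phrase ``$j\to -\infty$'', which does not apply here since the trajectory is singly infinite, but the argument you outline (Eulerian rearrangement between successive cut times) is the standard one.
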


\medskip
\noindent \textbf{Organisation.} Section \ref{sec:GC} contains the proof of our main theorem, Theorem \ref{thm:mainPolyDecay}. First, in Section \ref{subsec:overarching}, we describe the overarching strategy behind the proof of Theorem \ref{thm:mainPolyDecay} and give a proof in the much simpler special case in which the Green's function decays exponentially along the random walk. We then introduce relevant technical preliminaries in Sections \ref{subsec:drawbridge} and \ref{subsec:deterministic} before proving Theorem \ref{thm:mainPolyDecay} in Section \ref{subsec:mainproof}.  
Finally, we prove our results concerning superdiffusive walks in Section \ref{sec:SSD} and prove that \cref{thm:mainPolyDecay} is sharp for birth-death chains in \cref{sec:sharp}.

\medskip

\noindent \textbf{Notation.} Given a sequence of real numbers $(z_n)_{n\geq 0}$, we will often write $(z^*_n)_{n\geq 0}$ for the associated sequence of running minima $z_n^*= \min_{0 \leq m \leq n} z_m$.

\section{Proof of the main theorem} \label{sec:GC}

In this section we prove our main theorem, Theorem \ref{thm:mainPolyDecay}. We will work mostly under the additional assumption that $M$ is irreducible, locally finite (i.e.\ that there are finitely many possible transitions from each state), and has $P(x,x)=0$ except possibly for one absorbing state $\dagger$, before showing that the general case follows from this case at the end of the proof. It will be convenient to work throughout with the hitting probabilities
\[
\bbH(x,y) = \bbP_x(\text{hit $y$}) = \frac{\bbG(x,y)}{\bbG(y,y)}
\]
rather than the Green's function. This can be done with minimal changes to each of the other statements since $\bbH(X_n,y)$ decays at the same rate as $\bbG(X_n,y)$ for each fixed $y$.

Let us now give some relevant definitions. We define a \textbf{Markov chain with killing} to be a tuple $M=(\Omega,P,\dagger)$ where $\Omega$ is a countable state space, $P:\Omega\times\Omega\rightarrow[0,1]$ is the transition kernel
 and $\dagger\in\Omega$ is a distinguished \textbf{graveyard} state satisfying $p(\dagger,\dagger)=1$. 
We say that a Markov chain with killing is \textbf{locally finite} if the set $\{v:p(u,v)>0\}$ is finite for every $u\in \Omega$ and say that a Markov chain with killing is \textbf{irreducible} if for every $u,v\in\Omega\setminus\{\dagger\}$ there exists $n\in\N$ such that $P^{n}(u,v)>0$. We say the chain is \textbf{transient} if every state other than $\dagger$ is visited at most finitely many times almost surely.
Given a trajectory $X$ of a Markov chain with killing, we define for each $x\in \Omega$ the hitting time $\tau_x=\inf\{n\geq 0: X_n=x\}$,
 and say that a trajectory of the chain is \emph{killed} if $\tau_{\Dag}<\infty$. 

\begin{theorem} \label{thm:kcuts}
Let $M=(\Omega,P,\dagger)$ be a transient, locally finite, irreducible Markov chain with killing such that $P(x,x)=0$ for every $x\neq \dagger$, let $X=(X_n)_{n\geq 0}$ be a trajectory of  $M$, and let $\phi:[0,\infty)\rightarrow[0,\infty)$ be an increasing bijection such that
	\begin{equation}\label{eq:general_condition}
	\sum_{n=1}^\infty \frac{1}{1 \vee \log\left(\phi^{-1}(n)\right)}=\infty.
	\end{equation} 
	If the event
	$\mathscr{G}=\{ \limsup_{n\to\infty} e^{\phi(n)} \bbH(X_n,X_0) < \infty\}$
	has positive probability, then $X$ is either killed or has infinitely many cut times almost surely conditional on $\mathscr{G}$.
\end{theorem}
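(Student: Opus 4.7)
The plan is to apply a conditional Borel--Cantelli argument along the exponential scales of the nonnegative process $h_n := \bbH(X_n, X_0)$; by transience, the walk visits $X_0$ only finitely often almost surely, after which $(h_n)$ becomes a genuine $\mathcal{F}_n$-martingale (via the harmonic equation $\bbH(\cdot, X_0) = P \bbH(\cdot, X_0)$ off $X_0$) converging to $0$. Define scale hitting times $T_k := \inf\{n : h_n \leq e^{-k}\}$. On $\mathscr{G}$ there exists a (random) finite $C$ with $h_n \leq C e^{-\phi(n)}$ for all large $n$, whence $T_k \leq \phi^{-1}(k + \log C)$ for all sufficiently large $k$.

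The core step is to construct, for each $k$, an $\mathcal{F}_{T_{k+1}}$-measurable event $E_k$ satisfying: (i) $E_k$ implies that some $n \in [T_k, T_{k+1})$ is a cut time of $X$; and (ii) $\bbP(E_k \mid \mathcal{F}_{T_k}) \geq c / (1 \vee \log T_k)$ on $\{T_k < \infty,\ X_{T_k} \neq \dagger\}$ for a universal constant $c > 0$. Granted such a family, on $\mathscr{G} \cap \{\text{walk never killed}\}$ the upper bound on $T_k$ combined with the divergence hypothesis \eqref{eq:general_condition} yields $\sum_k \bbP(E_k \mid \mathcal{F}_{T_k}) = \infty$ almost surely, so the conditional Borel--Cantelli lemma forces $E_k$ to occur for infinitely many $k$; since the intervals $[T_k, T_{k+1})$ are disjoint, this delivers infinitely many cut times.

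The main obstacle is the construction of $E_k$ and the lower bound in (ii). The naive choice "$T_k$ itself is a cut time" is insufficient: although $h$ controls $\bbP_{X_{T_k}}(\text{hit } X_0) = h_{T_k} \leq e^{-k}$, there is no a priori control on the probability of the future walk hitting some \emph{other} past vertex, and the naive union bound $\sum_{y \in \{X_0,\dots,X_{T_k-1}\}} \bbH(X_{T_k}, y)$ may easily exceed one --- indeed, for a general (nonreversible) chain we cannot even relate $\bbH(X_{T_k}, y)$ to $\bbH(y, X_{T_k})$ via a symmetry. Based on the discussion following \cref{thm:mainPolyDecay}, I anticipate that the drawbridge construction of Section \ref{subsec:drawbridge} will refine the $h$-excursion from $e^{-k}$ down to $e^{-(k+1)}$ into $O(\log T_k)$ geometrically spaced sub-scales and isolate, within one of them, a subpath that plays the role of a bottleneck separating past from future. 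The factor $1/\log T_k$ in (ii) is then the probability cost of correctly selecting which sub-scale supports the drawbridge, while the deterministic pathwise estimates of Section \ref{subsec:deterministic} should bound hitting probabilities to older past vertices purely in terms of the scale structure of $h$, without reference to finer geometric information about the chain.
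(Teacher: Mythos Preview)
Your high-level scaffold---exponential scales for the hitting-probability process $h_n=\bbH(X_n,X_0)$, a per-scale event producing a cut time, and a conditional Borel--Cantelli argument---matches the paper's. But the proposal misses the central mechanism and misassigns the roles of the technical ingredients, so as written it does not go through.

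\textbf{The permadrop insight.} Your main worry, that one must separately control $\bbH(X_{T_k},y)$ for every past vertex $y$, is unfounded, and the paper's key observation dissolves it. If at some time $n$ the process $h$ reaches a new running minimum, and thereafter $h$ never climbs back to the level $h^*_{n-1}$ of the \emph{previous} running minimum, then $n$ is automatically a cut time: indeed, if the walk visited $X_m$ at some time $n'\geq n$ with $m<n$, then $h_{n'}=\bbH(X_{n'},X_0)=\bbH(X_m,X_0)=h_m\geq h^*_{n-1}$, a contradiction. So one only ever needs to control the single real-valued process $h$, never a union bound over past states. The right per-scale event is therefore ``there is a \emph{permadrop} on scale $k$'': a drop of $h^*$ within $[e^{-k-1},e^{-k}]$ after which $h$ never recovers above the pre-drop level.

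\textbf{Measurability.} A permadrop event (and any event implying a cut time) depends on the entire future of the walk, so your requirement that $E_k$ be $\mathcal{F}_{T_{k+1}}$-measurable is impossible. The paper handles this by passing to a sparse subsequence of scales $a(k)$ with $a(k)-a(k-1)\to\infty$ and introducing an auxiliary event $F_k$ (``all earlier non-permadrops have already recovered by time $T_{1,a(k)}$''), which makes $(F_k\cap\text{no earlier permadrop})$ measurable at the entry time of scale $a(k)$; optional stopping shows $F_k^c$ has summably small probability along the sparse sequence.

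\textbf{Where the $1/\log$ factor actually comes from.} It is not the cost of ``selecting a sub-scale supporting the drawbridge''. The paper bounds $\bbP(\text{permadrop on scale }k\mid\mathcal{F}_{T_{1,k}})$ by comparing two expectations of the number $R_k$ of \emph{large} permadrops on scale $k$: a lower bound $\bbE[R_k]\gtrsim \bbP(\text{scale $k$ is ``$\psi$-good''})$ (Proposition~\ref{prop:k_lower}) and an upper bound $\bbE[R_k]\lesssim (\log\psi^{-1}(k))\,\bbP(R_k\geq1)$ (Proposition~\ref{prop:k}). Dividing gives $\bbP(R_k\geq1)\gtrsim 1/\log\psi^{-1}(k)$ on good scales. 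The upper bound is the delicate part: conditioning on the first permadrop $(a,b)$, one must bound the expected number of further permadrops uniformly over all possible configurations of large drops below $b$, and this is where the drawbridge process is used---it makes $(\mathcal{Z}_t)$ a \emph{continuous} martingale so that optional stopping yields exact hitting probabilities in both directions, not just one-sided inequalities.

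\textbf{What Section~\ref{subsec:deterministic} actually does.} It does not bound hitting probabilities to past vertices. It is a purely deterministic lemma: given the decay hypothesis, one can choose a threshold function $\Psi(k)$ and a sparse sequence $a(k)$ so that on a positive lower density of scales $a(k)$, at least half the total decay across that scale comes from drops of size at least $\Psi(a(k))$. This ``$\psi$-good'' condition is exactly what is needed to make the lower bound on $\bbE[R_k]$ nontrivial, and the sparse sequence $a(k)$ is what enables the decorrelation above while keeping $\sum_k 1/\log\psi^{-1}(a(k))=\infty$.
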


Note that \eqref{eq:general_condition} becomes equivalent to \eqref{eq:inteq} when $\Phi(x)=e^{-\phi(x)}$ as established in the following lemma; we found the condition in terms of $\Phi$ given in Theorem \ref{thm:mainPolyDecay} to be easier to think about in examples, while the condition in terms of $\phi$ given in Theorem \ref{thm:kcuts} is better suited to the proof.

\begin{lemma}
\label{lem:integral}
Let $\Phi :[0,\infty)\to(0,1]$ be a decreasing bijection and let $\phi=-\log \Phi$. Then
\[
\int_0^1\frac{1}{u (1\vee\log \Phi^{-1}(u))} \mathrm{d} u = \infty \qquad \text{ if and only if } \qquad \sum_{n=1}^\infty \frac{1}{(1 \vee \log \phi^{-1}(n))} = \infty.
\]
\end{lemma}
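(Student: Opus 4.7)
The plan is a change of variables followed by the standard integral comparison test, exploiting the fact that $\phi = -\log \Phi$ is an increasing bijection with $\phi^{-1}(y) = \Phi^{-1}(e^{-y})$.

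First, in the integral $\int_0^1 u^{-1}(1\vee\log\Phi^{-1}(u))^{-1}\,\mathrm{d}u$, I would substitute $u = e^{-y}$, so that $y = -\log u$, $\mathrm{d}u/u = -\mathrm{d}y$, the bounds $u=1$ and $u=0$ become $y=0$ and $y=\infty$, and
\begin{equation*}
\log \Phi^{-1}(u) = \log \Phi^{-1}(e^{-y}) = \log \phi^{-1}(y).
\end{equation*}
This turns the integral into
\begin{equation*}
\int_0^1 \frac{1}{u\,(1\vee\log\Phi^{-1}(u))}\,\mathrm{d}u = \int_0^\infty \frac{1}{1\vee\log\phi^{-1}(y)}\,\mathrm{d}y.
\end{equation*}

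Second, since $\phi$ is an increasing bijection of $[0,\infty)$, so is $\phi^{-1}$, hence $y\mapsto 1\vee\log\phi^{-1}(y)$ is non-decreasing and its reciprocal $f(y):= (1\vee\log\phi^{-1}(y))^{-1}$ is non-increasing and non-negative. The usual integral test then gives the sandwich
\begin{equation*}
\sum_{n=1}^\infty f(n) \;\leq\; \int_0^\infty f(y)\,\mathrm{d}y \;\leq\; f(0) + \sum_{n=1}^\infty f(n),
\end{equation*}
so the integral and sum converge or diverge together, which is exactly the claim.

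There is no real obstacle; the only point to check carefully is that the change of variables is valid (which follows from $\Phi$ being a decreasing bijection onto $(0,1]$, so the substitution $u=e^{-y}$ is a smooth bijection between $(0,1)$ and $(0,\infty)$) and that $f$ is genuinely monotone (which uses only that $\phi^{-1}$ is monotone, with no regularity needed beyond that already built into the hypothesis). I would keep the write-up to these two short steps.
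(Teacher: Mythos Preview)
Your proof is correct and is essentially the same argument as the paper's: both perform a logarithmic change of scale and then compare the resulting integral to the sum term-by-term. The paper carries this out by splitting the integral into pieces over $[e^{-k},e^{-k+1}]$ and bounding each piece directly, whereas you make the exact substitution $u=e^{-y}$ first and then invoke the integral test for monotone functions; your version is slightly cleaner and immediately gives both directions, while the paper only writes out one direction and states that the other is similar.
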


\begin{proof}[Proof of Lemma \ref{lem:integral}]
We will prove that if the integral involving $\Phi$ diverges then the sum involving $\phi$ diverges, this being the only direction of the lemma that we need. The reverse direction is proved similarly. Since $\Phi$ is decreasing, we have that
\begin{multline}
\int_0^1\frac{1}{u (1 \vee \log \Phi^{-1}(u))} \mathrm{d} u = \sum_{k=1}^\infty \int_{e^{-k}}^{e^{-k+1}} \frac{1}{u \log (1 \vee \Phi^{-1}(u))} \mathrm{d} u
\\ \leq \sum_{k=1}^\infty  \frac{e^{-k+1}}{e^{-k}(1 \vee \log \Phi^{-1}(e^{-k+1}))} = \sum_{k=1}^\infty  \frac{e}{(1 \vee \log \Phi^{-1}(e^{-k+1}))},
\end{multline}
and the claim follows since $\Phi^{-1}(e^{-k+1}) =\phi^{-1}(k-1)$.
\end{proof}

\subsection{The overarching strategy and the special case of exponential decay}
\label{subsec:overarching}
In this section we describe the high-level strategy underlying Theorem \ref{thm:mainPolyDecay} and present a proof  in the much simpler case of an exponentially decaying hitting probability process $\mathbb{H}(X_n)$. We then document the issues that arise when attempting to extend this method to the subexponential case and outline how we overcome them.

The high-level idea is to construct a function $F:\Omega\rightarrow[0,\infty)$ such that there are infinitely many times $n$ when the trajectory $(X_m)$ of the irreducible Markov chain $M=(\Omega,P)$ satisfies
\begin{center}
	1.  $F(X_{n})<\min_{m<n} F(X_{m})$. \qquad\qquad\qquad and \qquad\qquad\qquad
	2. $F(X_m)\leq F(X_{n})$ for $m>n.$
\end{center}
In other words, at each of these times $n$, the process $F(X_i)$ must \textit{drop} lower than it has previously, and this drop must be a \textit{permadrop}, i.e.\ $F(X_i)$ must not \textit{recover} to any level achieved prior to the drop. Indeed, if both 1.\ and 2.\ hold then the walk cannot return to any vertex it has previously visited and therefore has a cut time at $n$.
For the first of these two properties to hold infinitely often, it is sufficient that the process $(F(X_n))_{n\geq 0}$ converges to zero, and given transience of the Markov chain, a candidate such as $F(x) = d(o,x)^{-1}$ would suffice. Indeed, studying graph distances appears to be a particularly natural choice in the superdiffusive regime.
Unfortunately, there seem to be very limited tools available to prove that this function yields infinitely many permadrops, even when the random walk has positive speed.

These considerations make it natural to instead study the decay of hitting probabilities along the random walk:
when the chain is transient 
 the \textbf{hitting probability process} $(\mathbb{H}(X_n,X_0))_{n\geq 0}$ automatically tends to zero, and we can use the fact that the process is a martingale to attempt to analyse the number of permadrops.
   Indeed, Benjamini, Gurel-Gurevich and Schramm  \cite{MR2789585} used martingale techniques to show that the \emph{expected} number of permadrops of this process is always infinite when the chain is transient and hence that every transient chain has infinitely many cut times in expectation. Thus, a natural approach to the cut times problem is to find sufficient conditions for the number of permadrops of this process to be infinite almost surely.

 Let us first consider the special case in which $M$ is irreducible and $\mathbb{H}(X_n,X_0)$ decays exponentially. Note that this case is already sufficient to resolve the conjecture of Benjamini, Gurel-Gurevich, and Schramm~\cite{MR2789585} in conjunction with the spatially-dependent-killing argument of Section \ref{sec:SSD}.

 \begin{proposition} \label{prop:expDeccase}
If $M$ is irreducible and the hitting probability process $Z_n := \mathbb{H}(X_n,X_0)$ decays exponentially in the sense that $\liminf_{n\to\infty} \frac{1}{n} \log 1/Z_n >0$  a.s.\ then  $X$ has infinitely many cut times a.s. 
 \end{proposition}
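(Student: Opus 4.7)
The plan is to find infinitely many \emph{permadrop} times of the martingale $Z_n := \bbH(X_n, X_0)$, i.e.\ times $n$ with $Z_n < Z_m$ for every $m \neq n$; by the discussion in Section~\ref{subsec:overarching}, each such permadrop is a cut time of $X$. I would first make two preparatory reductions. Since the chain is irreducible and transient, $X_0$ is visited only finitely many times a.s., so after discarding an a.s.\ finite initial segment we may treat $(Z_n)$ as a genuine non-negative martingale tending to zero. Since the event $\{\liminf (1/n)\log(1/Z_n) > 0\}$ is a tail event, Kolmogorov's $0$--$1$ law gives that the $\liminf$ itself is a.s.\ equal to a deterministic positive constant $c_0$, so fixing any $c \in (0, c_0)$ ensures $Z_n \leq e^{-cn}$ for all $n$ past an a.s.\ finite time.

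Next I would define stopping times $\tau_k := \inf\{n : Z_n \leq e^{-Ak}\}$ at an exponentially spaced family of scales, for a constant $A > 1$ to be chosen at the end. By the exponential decay these are a.s.\ finite with $\tau_k \leq Ak/c$ eventually, and each $\tau_k$ is automatically a strict record low because $Z_m > e^{-Ak} \geq Z_{\tau_k}$ for every $m < \tau_k$. The core quantitative input is Doob's maximal inequality applied to the non-negative martingale $(Z_m)_{m \geq \tau_k}$: setting $\lambda = e^{-A(k-1)}$ gives
\[
\bbP\bigl(\sup_{m \geq \tau_k} Z_m \geq e^{-A(k-1)} \bigm| \mathcal{F}_{\tau_k}\bigr) \;\leq\; \frac{Z_{\tau_k}}{e^{-A(k-1)}} \;\leq\; e^{-A},
\]
which is arbitrarily small. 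On this Doob-good event one has $Z_m < e^{-A(k-1)}$ for every $m > \tau_k$, which forces $X_m \neq X_j$ for every such $m$ and every $j < \tau_{k-1}$, since their $Z$-values then lie in disjoint intervals.

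The remaining task is to rule out revisits, after time $\tau_k$, to the intermediate vertices $X_j$ with $\tau_{k-1} \leq j \leq \tau_k$. For these I would use the elementary bound $\bbH(X_{\tau_k}, X_j) \leq Z_{\tau_k}/Z_j$, which follows from the path-through-$X_j$ inequality $\bbH(X_{\tau_k}, X_0) \geq \bbH(X_{\tau_k}, X_j)\,\bbH(X_j, X_0)$. A union bound over the at most $O(k)$ intermediate vertices, combined with the Doob estimate and $A$ chosen large enough, should yield a uniform lower bound $\bbP(\tau_k \text{ is a cut time} \mid \mathcal{F}_{\tau_k}) \geq \delta > 0$, after which L\'evy's conditional Borel--Cantelli lemma gives infinitely many cut times almost surely. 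The main obstacle is precisely this intermediate-vertex estimate: since $Z_j$ for $j$ close to $\tau_k$ can a priori be nearly as small as $Z_{\tau_k}$ itself, the naive union bound is not tight, and one has to refine the candidate cut time or decompose the interval $[\tau_{k-1}, \tau_k]$ into finer sub-scales. This is also precisely the reason the general subexponential case of Theorem~\ref{thm:mainPolyDecay} requires the more elaborate strategy developed in the rest of Section~\ref{sec:GC}.
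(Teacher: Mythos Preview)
Your proposal correctly identifies the overall shape of the argument but stalls at the self-diagnosed ``intermediate-vertex'' step, and the workaround you sketch (union-bounding $\bbH(X_{\tau_k},X_j)$ over $j\in[\tau_{k-1},\tau_k)$) does not close: for $j$ just below $\tau_k$ the ratio $Z_{\tau_k}/Z_j$ is close to $1$, so summing $O(k)$ such terms gives nothing useful. The paper avoids this difficulty altogether by a different choice of stopping times. Instead of fixed levels $e^{-Ak}$, it records the times $t_n^a$ at which $Z_m \le a\,Z_{m-1}^*$ for a constant $a\in(0,1)$, where $Z_m^*=\min_{j\le m}Z_j$ is the running minimum; exponential decay guarantees infinitely many such times for some $a<1$. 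The permadrop event is then $\{Z_m < a^{-1}Z_{t_n^a}\text{ for all }m\ge t_n^a\}$, which by optional stopping has conditional probability at least $1-a$. On this event $Z_m < a^{-1}Z_{t_n^a}\le Z_{t_n^a-1}^*$ for every $m\ge t_n^a$, and since \emph{every} previously visited vertex has $Z$-value at least $Z_{t_n^a-1}^*$, none can be revisited---there is simply no intermediate range to treat separately. L\'evy's zero-one law then gives infinitely many permadrops on the event $E^a=\{t_n^a<\infty\ \forall n\}$, and one finishes by observing that $\bigcup_k E^{(k-1)/k}$ has full probability.

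Two smaller issues. Your opening description of a permadrop as ``$Z_n<Z_m$ for every $m\ne n$'' is a strict global minimum, of which there can be at most one; presumably you intend the two-sided condition from Section~\ref{subsec:overarching}. And Kolmogorov's $0$--$1$ law is for independent sequences: for a general irreducible transient Markov chain the tail $\sigma$-algebra need not be trivial, so you cannot deduce that $\liminf (1/n)\log(1/Z_n)$ is a.s.\ equal to a deterministic constant. The paper's argument does not need this reduction.
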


  The proof of this proposition will rely on L\'{e}vy's zero-one law \cite{LevyPaul1954Tdld}, which is a special case of the martingale convergence theorem.

 \begin{lemma}[L\'{e}vy's zero-one law]
 	Let $(\Omega,F,\pr)$ be a probability space, and let $\mathbb{E}$ denote expectation with respect to $\pr$. Let $(F_n)_{n\geq 0}$ be a filtration and let $A$ be an $F_\infty=\cup_n F_n$ measurable event. Then
 	\[
 	\lim_{k\rightarrow\infty}\mathbb{P}[A\mid F_k] = \mathbbm{1}_A \qquad \text{almost surely.}
 	\]
\end{lemma}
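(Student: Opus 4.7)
The plan is to identify $\pr[A \mid F_k]$ as a closed martingale and apply the martingale convergence theorem, then show that the limit must coincide with $\mathbbm{1}_A$. First I would define $M_k = \pr[A \mid F_k] = \mathbb{E}[\mathbbm{1}_A \mid F_k]$, so that $(M_k)_{k \geq 0}$ is an $(F_k)$-martingale by the tower property of conditional expectation. Since $|M_k| \leq 1$ for every $k$, the martingale is uniformly bounded and in particular uniformly integrable, so Doob's martingale convergence theorem gives the existence of an $F_\infty$-measurable limit $M_\infty$ with $M_k \to M_\infty$ almost surely and in $L^1$.

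The main content is to show that $M_\infty = \mathbbm{1}_A$ almost surely. Since $M_\infty$ is $F_\infty$-measurable, it suffices to show $M_\infty = \mathbb{E}[\mathbbm{1}_A \mid F_\infty]$, because $A \in F_\infty$ then forces the right-hand side to equal $\mathbbm{1}_A$. To establish this identification, I would check the defining property of conditional expectation: for every $B \in F_\infty$,
\[
\mathbb{E}[M_\infty \mathbbm{1}_B] = \mathbb{E}[\mathbbm{1}_A \mathbbm{1}_B].
\]
If $B \in F_n$ for some fixed $n$, then $B \in F_k$ for all $k \geq n$, so by the definition of $M_k$ and the fact that $B \in F_k$ we have $\mathbb{E}[M_k \mathbbm{1}_B] = \mathbb{E}[\mathbbm{1}_A \mathbbm{1}_B]$; passing to the limit using $L^1$ convergence of $M_k$ to $M_\infty$ gives the desired equality for all $B$ in the algebra $\bigcup_n F_n$.

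The main (and only real) obstacle is extending this identity from the algebra $\bigcup_n F_n$ to all of $F_\infty = \sigma\bigl(\bigcup_n F_n\bigr)$. I would handle this with a standard monotone class / Dynkin's $\pi$-$\lambda$ argument: the collection of $B \in F_\infty$ for which $\mathbb{E}[M_\infty \mathbbm{1}_B] = \mathbb{E}[\mathbbm{1}_A \mathbbm{1}_B]$ is a $\lambda$-system (closed under complements and countable increasing unions by monotone convergence, using $|M_\infty|, |\mathbbm{1}_A| \leq 1$ for dominated convergence where needed), and it contains the $\pi$-system $\bigcup_n F_n$, hence contains all of $F_\infty$. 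This identifies $M_\infty$ with $\mathbb{E}[\mathbbm{1}_A \mid F_\infty]$, and since $A$ is $F_\infty$-measurable this conditional expectation equals $\mathbbm{1}_A$ almost surely, completing the proof.
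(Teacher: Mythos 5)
Your proof is correct and follows exactly the route the paper indicates, namely that Lévy's zero-one law is a special case of the martingale convergence theorem: the paper simply cites this fact, and you supply the standard argument (bounded martingale $\mathbb{E}[\mathbbm{1}_A\mid F_k]$ converges a.s.\ and in $L^1$, then identify the limit as $\mathbb{E}[\mathbbm{1}_A\mid F_\infty]=\mathbbm{1}_A$ via a $\pi$-$\lambda$ argument on $\bigcup_n F_n$). No gaps; the uniform integrability, the $L^1$ passage to the limit on sets in the generating algebra, and the Dynkin-system extension are all handled correctly.
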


 \begin{proof}[Proof of Proposition \ref{prop:expDeccase}]
 	Since $M$ is irreducible, $Z_n$ is positive for every $n\geq 0$. Since $Z$ also decays exponentially almost surely, the sequence $Z^*_n=\inf_{m\leq n}Z_m$ is also positive and decays exponentially almost surely. In particular, there exists a $[0,1]$-valued random variable $\alpha$ satisfying $\alpha<1$ almost surely such that 
 	\begin{equation} \label{eq:jump}
 	Z_n = Z_n^* \leq \alpha Z^*_{n-1}
 	\end{equation}
 	for infinitely many $n$.

 	For each $a\in(0,1)$, define the sequence of times $(t_n^a)_{n\geq 0}$ recursively by setting $t_0^a=0$ and for each $n\geq 1$ setting
 	\[
 	t_n^a = \inf \{m>t_{n-1}^a:Z_m \leq a Z^*_{m-1}\}
 	\]
 	with the convention that $\inf \emptyset = \infty$.
Let $E^a=\{t_n^a<\infty\ \forall n\geq 0\}$ be the event that there are infinitely many drops of size at least $a$ and for each $n \geq 1$ consider the \emph{permadrop} event
 	 $A_n^a=\{t_n^a < \infty $ and $Z_m<a^{-1}Z_{t_{n}^a}$ for every $m \geq t^a_n\}$, so that $Z_m < Z^*_{t_n^a-1}$ for every $m\geq t_n^a$ on the event $A^a_n$.
 	Let $\mathcal{A}^a$ be the event that infinitely many of the events $A^a_n$ hold, so that $\mathcal{A}^a \subseteq E^a$ and $X$ has infinitely many cut times whenever $\mathcal{A}^a$ holds. 
 	Fix $a\in (0,1)$ and suppose that $E^a$ occurs with positive probability.
 	 Since the filtration
 	has the $\sigma$-algebra generated by the entire random walk as its union,
 	  L\'{e}vy's zero-one law implies that
 	\[
 	\lim_{n\rightarrow\infty}\pr(t_n^a < \infty \text{ and } \exists m\geq n \text{ s.t. } A_m^a \text{ occurs} \mid \mathcal{F}_{t_n})=\mathbbm{1}(\mathcal{A}^a)
 	\] almost surely.
 	On the other hand, since $Z$ is a supermartingale,
 	we have by optional stopping that
 	\[
 	\pr(t_n^a < \infty \text{ and } \exists m\geq n \text{ s.t. } A_m^a \text{ occurs} \mid \mathcal{F}_{t_n})\geq \pr(A_n^a\mid \mathcal{F}_{t_n})\mathbbm{1}(t_n^a<\infty)\geq (1-a)\mathbbm{1}(t_n^a<\infty)
 	\]
almost surely for each $n\geq 1$. Since the latter estimate is bounded away from zero as $n\to\infty$ on the event $E^a$, we deduce that $\mathcal{A}^a$ holds almost surely conditional on $E^a$ and hence that $X$ has infinitely many cut times almost surely conditional on $E^a$. The claim follows since $a \in (0,1)$ was arbitrary and the countable union $\bigcup_{k \geq 1} E^{(k-1)/k}$ has probability $1$ by \eqref{eq:jump}. \qedhere
 \end{proof}

\textbf{Problems in the subexponential case.} As we have just seen, it is straightforward to show that $Z_n$ has infinitely many permadrops whenever it decays exponentially: the large decay rate guarantees an infinite supply of drops of a constant relative size, and the optional stopping theorem bounds the probability of each of these drops being a permadrop below by a constant. This constant lower bound means we can rely on soft techniques like L\'{e}vy's zero-one law to deduce that permadrop events occur infinitely often without having to worry about their dependencies. However, even if we did have to think about dependencies, we could choose the drops far enough away from each other such that we could easily control the correlations between their recovery events. (The exact argument is somewhat subtle: it is not necessarily true that the correlations are small, but the conditional probability of there being a permadrop on one scale given what has happened on previous scales is bounded away from 0.)

When we move to the subexponential case, this argument quickly begins to break down. Indeed, the best we were able to do by optimizing the above approach was to handle the case of stretched-exponential decay $Z_n = e^{-\Theta(n^z)}$ for $z>1/2$. Let us now overview the problems that arise when attempting to perform such an optimization. First, without access to L\'{e}vy's zero-one law, we now have to consider correlations between recovery events. Perhaps more significantly, however, subexponential decay gives us only very loose information about the local behaviour of the hitting probability process. We know the extent to which it must decrease over long periods of time, but have relatively little structural information about how this decrease occurs or about the positions and sizes of the drops: the overall fall in value of the process could be made up of frequent small drops, rare large drops, or any combination thereof.
Consider for instance the case of stretched exponential decay $Z_n =  e^{-\Theta(n^{z})}$ for $z\in(0,1)$. This decay could be achieved by drops by a factor of size $1-n^{z-1}$ at a positive density of times, or, say, by halving at each time of the form $n^{1/z}$. The only restriction is that we cannot have too much of the decay made up of very small drops, as this would contradict the assumed decay of the process.

In an attempt to adapt the arguments used in the exponential decay case, a natural starting place would be to attempt to extract a sparse sequence of roughly independent drops of guaranteed size. For instance, in the stretched exponential decay case $Z_n= e^{-\Theta(n^{z})}$, we can set up the infinite sequence of stopping times \[
t_n=\inf\left\{m>t_{n-1}:Z_m<a_nZ_{t_{n-1}} \text{ and } Z_m<(1-n^{z^\prime-1}) Z_m^*\right\}
\]
for some decreasing sequence $(a_n)$ very slowly converging to $0$, and $z^\prime\in(0,z)$, where the sequence $a_n$ should be chosen to allow us to safely ignore dependencies between successive steps. It turns out that this works well for $z>1/2$: a deterministic argument proves that if $(Z_m)$ has only finitely many drops of any constant relative size, then for $n$ large enough, the drop at time $t_n$ must approximately have size at least $1-n^{(z^\prime-1)/z}$, and optional stopping allows us to control the dependencies between the recovery events. Optional stopping then gives a $n^{(z^\prime-1)/z}$ probability of the drop at time $t_n$ being a permadrop, and a simple generalisation of Borel-Cantelli then implies that there are infinitely many permadrops almost surely. For $z\leq\frac{1}{2}$, however, the sequence $n^{(z^\prime-1)/z}$ has a convergent sum and the argument breaks down. At this stage we are very far from handling polynomial decay!

\medskip

\textbf{Addressing the problems. }
To get results when the process decays slower than $e^{-n^{1/2}}$, we can no longer just extract sparse sequences and must begin to consider neighbouring drops and the interactions between their recovery events. We attempted to employ a second moment method, bounding each $\pr(A_i\cap A_j)$ from above where $A_i$ is the probability that the $i$th drop is a permadrop. Unfortunately, due to the looseness of the information that we have regarding the locations and sizes of the drops, this method proved difficult to implement and did not seem capable of producing optimal results.
To overcome the outlined issues, we instead analyse the path of the hitting probability process as it traverses a series of spatial scales. At each scale we upper bound the expected number of \emph{large} permadrops \textit{conditional on there being at least one}, and simultaneously \textit{lower bound} the \emph{unconditional} expected number of large permadrops. We modulate the definition of ``large" across scales to ensure that the former  quantity is not too large and the latter is not too small: we need the threshold for the drop sizes we consider to be small enough that we get an adequate supply of drops to lower bound the unconditional expectation while being large enough to prevent an accumulation of drops amplifying the conditional expectation. Once we have done this with a well-chosen choice of thresholding function, comparing these two quantities allows us to lower bound the probability that there is a permadrop on each scale; 
considering a whole scale simultaneously, rather than individual pairs of drops, allowed us to tackle the flexibility present in the structure of the decay. The Borel-Cantelli counterpart then has a natural application demonstrating that  there are infinitely many permadrops when the decay of the hitting probability is strong enough.

Rather than working directly with the hitting probability process of the Markov chain, we work with an augmented continuous time process which we call the \textit{drawbridge process}. This makes the hitting probability process a continuous martingale away from $1$ and lets us use optional stopping to get exact expressions for permadrop probabilities rather than one-sided inequalities: this is important since we need to prove both upper and lower bounds on relevant expectations. As mentioned above, we will work primarily in the setting of locally finite Markov chains that are irreducible bar the presence of a graveyard state, before deducing a result for general Markov chains via a simple reduction argument.

\subsection{The drawbridge process}
\label{subsec:drawbridge}

 At several points in our analysis we will want to apply the optional stopping theorem to get equalities rather than one-sided inequalities,
making it convenient to work with continuous rather than discrete martingales.
 For the random walk on a graph, it is well-known that one can embed the discrete-time random walk inside a continuous-time continuous process by considering Brownian motion on an appropriately constructed metric graph known as the \textit{cable graph} \cite{MR3152724,MR3502602}. We now construct a similar way of embedding a \emph{non-reversible} locally finite Markov chain inside a continuous Markov process, which we call the \emph{drawbridge process}, and hence of embedding the discrete-time hitting probability process inside a continuous martingale.
  While there are precedents for considering similar processes \cite{MR2009371}, it appears to be much less well known than the cable process, and we give a fairly detailed introduction to keep the paper self-contained.

Before giving a precise definition let us first give the intuition behind the name. Let $M=(\Omega,P,\dagger)$ be a locally finite Markov chain with killing and suppose that $P(x,x)=0$ for every $x\neq \dagger$. Consider the corresponding directed graph $G$ with vertex set $\Omega$ and with a directed edge from a vertex $u$ to a vertex $v$ if $u\neq v$ and $P(u,v)>0$. We can make this abstract graph physical, in some sense, by assigning the positive real length $1/P(u,v)$ to each directed edge $(u,v)$. 
 While it is nonsensical to think of a Brownian motion which can only travel in one direction, we can recover restrictions in motion through the use of ``drawbridges". More specifically, we envision Brownian motion on a modified version of the metric graph, in which one places a ``drawbridge" along each directed edge of the metric graph. Each drawbridge has two states, \textbf{raised} and \textbf{lowered}. When the drawbridge at $(u,v)$ is raised, the connection between the part of the edge near $v$ and the vertex $v$ itself is severed, and the Brownian motion cannot cross from $v$ onto the edge $(u,v)$. Conversely, when $(u,v)$ is lowered it is possible for the Brownian motion to enter the edge from either $u$ or $v$. For each vertex $u$, we call the drawbridges across the edges emanating from $u$ in the corresponding directed graph the \textbf{outgoing drawbridges} from $u$. The drawbridge process will be defined by taking the Brownian motion on this metric graph and raising and lowering drawbridges as the Brownian motion moves so that, at each time, the outgoing drawbridges from the last vertex it visited are lowered and all other drawbridges are raised.

We now make this precise.
Let $M=(\Omega,P,\dagger)$ be a locally finite Markov chain with killing such that $P(x,x)=0$ for every $x\neq \dagger$. For each state $x\in\Omega$, we define the set of outgoing states $x^\rightarrow$ to be $\{y \in \Omega\setminus \{x\}:P(x,y)>0\}$. We define the \emph{star graph} $S[x]$ to be the metric graph with vertex set $\{x\}\cup x^\rightarrow$, with edge set $\{\{x,y\}:y\in x^\rightarrow\}$, and with edge lengths 
$1/P(x,y)$, so that $S[\dagger]$ is the metric graph consisting of the single vertex $\{\dagger\}$ and no edges. In an abuse of notation, we will identify vertices in the star graph with their corresponding states; the precise meaning will be clear from context.
We construct the metric space $\mathcal{S}$ from the disjoint union $\mathcal{S}^\sqcup=\sqcup_{y\in\Omega}S[y]=\{(x,y):x\in\Omega, y\in S[x]\}$ by gluing together $(u,u)$ and $(v,u)$ for every $v\in\Omega$ and $u\in v^\rightarrow$. Note that every point in $\mathcal{S}$ has a unique representation of the form $(x,y)$ where $x\in \Omega$ and $y \in S[x]\setminus x^\rightarrow$.

Let $(x_0,y) \in \mathcal{S}$ be such that $x_0\in \Omega$ and $y\in S[x] \setminus x_0^\rightarrow$.
We construct the drawbridge process on $\mathcal{S}$ starting at $(x,y)$ as follows. First we start a Brownian motion $(B^0_t)_{t\geq 0}$ on $\mathcal{S}^\sqcup$ starting at $(x_0,y)$ at time $\mathcal{T}(0)=0$ and run until the stopping time $\mathcal{T}(1)=\inf\{t> \mathcal{T}(0): B^0_t\in \{x_0\}\times x_0^\rightarrow\}$, so that if $\mathcal{T}(1)<\infty$ then $B^0_{\mathcal{T}(1)}=(x_0,x_1)$ for some $x_1\in x_0^\rightarrow$. If $\mathcal{T}(1)$ is finite, we then run a Brownian motion $(B^1_t)_{t=\mathcal{T}(1)}^{\mathcal{T}(2)}$ on $\mathcal{S}[x_1]$, started from $(x_1,x_1)$ at time $\mathcal{T}(1)$ and run until the stopping time $\mathcal{T}(2)=\inf\{t> \mathcal{T}(1): B^1_t\in \{x_1\}\times x_1^\rightarrow\}$.
  We iterate this construction to generate a possibly infinite sequence $((B^i_t)_{\mathcal{T}(i)\leq t\leq \mathcal{T}(i+1)})_i$, noting that $\mathcal{T}(i)$ is almost surely finite whenever $x_i \neq \dagger$. If the sequence terminates because $\mathcal{T}(i)=\infty$ for some $i\in N$, which almost surely happens exactly when the process first visits the graveyard state $\dagger$, then we define $\mathcal{T}(j)=\infty$ for $j>i$, set $\tau_{\Dag}=i-1$ and $\mathcal{T}_\dagger=\mathcal{T}(\tau_\dagger)$. If the sequence does not terminate, we set $\tau_{\Dag}=\mathcal{T}_\dagger=\infty$. Finally, we construct the \emph{drawbridge process} $(\mathcal{X}_t)_{t\geq 0}$ by concatenating, in order, the images of the paths of the  Brownian motions $(B^i)_{0\leq i\leq\tau_{\Dag}
}$ in $\mathcal{S}$ under the gluing map $\mathcal{S}^\sqcup\rightarrow\mathcal{S}$.

We let $\mathbf{P}_{x,y}$, $\mathbf{E}_{x,y}$ denote probability and expectation with respect to the law of $\mathcal{X}$ started at $(x,y)$ and write $\mathbf{P}_x=\mathbf{P}_{x,x}$ and $\mathbf{E}_x=\mathbf{E}_{x,x}$. Observe that if $\mathcal{X}$ is started at $(x,x)$ for some $x\in \Omega$ then the discrete-time process  $X=(X_n)_{n=0}^{\tau_\dagger}$ defined by $(\mathcal{X}_{\mathcal{T}(n)})_{n= 0}^{\tau_\dagger}=(X_n,X_n)_{n= 0}^{\tau_\dagger}$ has the distribution of a trajectory of the Markov chain stopped when it hits the graveyard state $\dagger$.
Thus, if we fix an arbitrary `origin' state $o \neq \dagger$ and let $\mathcal{T}_o=\inf\{t\geq 0:\mathcal{X}_t=(o,o)\}$ be the hitting time of $o$, setting $\mathcal{T}_o=\infty$ if $o$ is never hit, then the hitting probability
$\mathcal{H}(x,y)=\mathbf{P}_{x,y}(\mathcal{T}_o<\infty)$ satisfies $\mathcal{H}(x,x)=\mathbb{H}(x)=\mathbb{H}(x,o)$ for all $x\in\Omega$.
We define $(\mathcal{Z}_t)_{t\geq 0}=(\mathcal{H}(\mathcal{X}_t))_{t\geq 0}$ and $(Z_n)_{n\geq 0}=(\mathbb{H}(X_n))_{n\geq 0}=(\mathcal{Z}_{\mathcal{T}(n)})_{n\geq 0}$, noting that if $\sigma_1 \leq \sigma_2$ are stopping times for $\mathcal{X}$ such that $\mathcal{X}_t \neq o$ almost surely for every $\sigma_1 < t < \sigma_2$ 
  then $(\mathcal{Z}_t)_{t=\sigma_1}^{\sigma_2}$ is a continuous time martingale with respect to its natural filtration.

\subsection{Deterministic preliminaries}
\label{subsec:deterministic}

We now set up the notation to record the behaviour of the running minima of a non-negative sequence as it converges to zero across a series of exponential scales. Recall that for each sequence $(z_n)_{n\geq 1}$ we write $(z_n^*)_{n\geq 0} = (\min_{m\leq n} z_m)_{n\geq0}$ for the associated sequence of running minima.  Given a non-negative sequence $(z_n)_{n\geq 0}$ converging to $0$ with $z_0>0$, we construct a sequence of logarithmic scales over which to analyse and control its behaviour. We record the associated notation in the following definition.

\begin{definition}[Notation for drops at scale $k$]\label{def:drops}
Fix a non-negative sequence $z=(z_n)_{n\geq 0}$ with $z_n>0$ and $z_n \to 0$ as $n\to\infty$.
Let $k_0=k_0(z)=\ceil{2\log z_0^{-1}}$, and for each $k\geq 1$ define the $k$th scale interval
  $I_n=[e^{-{k-1}},e^{-k}]$. For each $k\geq k_0$ we define the set $D_k$ by adjoining the set of running minima on the $k$th scale to the endpoints of the corresponding interval $I_k$ so that 
  \[D_k=D_k(z)=(\{z_m^* : m \geq 1\}\cap I_k)\cup\{ e^{-k}, e^{-k-1}\}\] for each $k\geq k_0$.  We define $N_k=N_k(z)=\abs{D_k(z)}-1$ and label the elements of $D_k(z)$ in decreasing order as $(d_{i,k})_{1\leq i\leq N_k}=(d_{i,k}(z))_{0\leq i\leq N_k}$ so that
\[ e^{-k}= d_{0,k}>d_{1,k}>\cdots>d_{N_k,k}= e^{-k-1} \qquad \text{ and } \qquad \prod_{i=0}^{N_k-1} \frac{d_{i+1,k}}{d_i,k} = e^{-1}.\]
We call the pairs $(d_{i,k},d_{i+1,k})$ for $0\leq i \leq N_k-1$ the \textbf{drops} of $z$ on scale $k$.
	When context makes clear which sequence $z$ we are referring to, we will drop it from our notation. Similarly, when it is clear that we are referring to a particular scale $k$ we will drop the second subscript on the $d_{i,k}$ by writing $d_i=d_{i,k}$.
\end{definition}

We begin the proof by proving the following deterministic lemma. Roughly speaking, this lemma says that for sequences which decay to zero sufficiently quickly, we can define a threshold between large and small drops in such a way that the following hold:
\begin{enumerate}
\item For a good proportion of scales, a good proportion of the decay is made up of large drops.
\item The large drops are large enough that there cannot be too many such drops at any particular scale.
\end{enumerate} The actual threshold we will use will be a simple function of the $\psi$ which is outputted by this lemma. Later, we will apply this lemma to the hitting probability process. 
Given an increasing function $\psi:[0,\infty)\to [0,\infty)$ and a sequence of non-negative numbers $(z_n)_{n\geq 0}$, we say that $(z_n)_{n\geq 0}$ is \textbf{$\psi$-good on scale $k$} if
\begin{multline*}
\prod\left\{\frac{d_{i+1,k}}{d_{i,k}}:0\leq i < N_{k} \text{ is such that } \frac{d_{i+1,k}}{d_{i,k}} \leq \exp\left[-\frac{k}{2\psi^{-1}(k)}\right]\right\} \\\leq \prod\left\{\frac{d_{i+1,k}}{d_{i,k}}:0\leq i< N_{k} \text{ is such that }  \frac{d_{i+1,k}}{d_{i,k}} > \exp\left[-\frac{k}{2\psi^{-1}(k)}\right]\right\},
\end{multline*}
or equivalently if
\[
\prod\left\{\frac{d_{i+1,k}}{d_{i,k}}:0\leq i< N_{k} \text{ is such that } \frac{d_{i+1,k}}{d_{i,k}} \leq \exp\left[-\frac{k}{2\psi^{-1}(k)}\right]\right\} \leq e^{-1/2},
\]
where we write $\prod\{A_i:i\in I\}=\prod_{i\in I} A_i$ for reasons of legibility.  That is, $(z_n)_{n\geq 0}$ is $\psi$-good on scale $k$ if at least half of the total decay across the scale comes from drops of size at least $\Psi(k):=e^{-k/2\psi^{-1}(k)}$ in a geometric sense. Note that $\psi^{-1}$ denotes the inverse of $\psi$ defined by $\psi^{-1}(x)=\min\{ y \geq 0 : \psi(y) \geq x\}$; we will typically think of $\psi$ as being a slowly growing function so that $\psi^{-1}$ satisfies $\psi^{-1}(x)\gg x$.

\begin{lemma}[Good, well-separated scales] \label{lem:decaydensity}
	Let $\phi:[0,\infty)\rightarrow [0,\infty)$ be an increasing bijection 
	 such that
	\begin{equation} \label{eq:sumeq}
		\sum_{k=1}^\infty \frac{1}{1 \vee \log\left(\phi^{-1}(k)\right)}=\infty.
	\end{equation}
	Then there exist an increasing bijection $\psi:[0,\infty)\rightarrow [0,\infty)$ satisfying $\psi(x)\leq \phi(x)$ and  $\psi(x)\leq \sqrt{x}$ for every $x\geq 0$ and a strictly increasing function $a:\N\rightarrow\N$ satisfying
	\begin{equation}\label{eq:freq}
		\lim_{n\rightarrow\infty} a(k)-a(k-1)=+\infty
		\qquad \text{ and } \qquad \sum_{k=1}^\infty \frac{1}{1 \vee \log(\psi^{-1}(a(k)))}=\infty,
	\end{equation}
	 such that if  $(z_n)_{n\geq 0}$ is a sequence of positive reals with $z_0>0$ satisfying $\limsup_{n\to\infty} e^{\phi(n)}z_n < \infty$,  then
\begin{equation}\label{eq:badgood}
	 \liminf_{k\rightarrow\infty}\frac{1}{k}\# \left\{1\leq r \leq k: \text{$z$ is $\psi$-good on scale $a(r)$}\right\}\geq \frac{1}{2}.
\end{equation}	
\end{lemma}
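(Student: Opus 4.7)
The plan is to construct $\psi$ and $a$ jointly, so that a pigeonhole comparison, fed by the deterministic drop-count obtained from the decay hypothesis on $z$, forces most $a(r)$-scales to be good. I will take $\psi$ strictly smaller than $\min(\phi,\sqrt{x})$ by a quantitative factor, making $\psi^{-1}$ grow substantially faster than $\phi^{-1}$, while taking $a$ sparse enough that $a(r+1)-a(r)\to\infty$ yet dense enough that \eqref{eq:freq} persists. Concretely, take $\psi$ essentially equal to $\min(\phi,\sqrt{x})/C$ for a constant $C>2$, suitably modified to remain a strictly increasing bijection; and define $a$ by partitioning $\mathbb{N}$ into consecutive blocks of slowly-growing size and picking the first index of each block. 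The hypothesized divergence of $\sum 1/(1\vee\log\phi^{-1}(k))$ provides enough slack for this greedy selection to preserve divergence of $\sum 1/(1\vee\log\psi^{-1}(a(r)))$.

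For the deterministic input, the assumption $\limsup e^{\phi(n)}z_n<\infty$ gives $T_k:=\min\{n:z_n^*\leq e^{-k}\}\leq \phi^{-1}(k+O(1))$ for large $k$, hence $\sum_{k\leq K}N_k\leq T_{K+1}+O(K)\leq \phi^{-1}(K+1+O(1))$, since each new running minimum consumes a distinct time step. If $z$ is $\psi$-bad on scale $k$, the definition of $\psi$-good forces at least half the unit of log-decay across the scale to come from drops of log-size at most $k/(2\psi^{-1}(k))$, so $N_k\geq \psi^{-1}(k)/k$; restricting to the $a(r)$-subsequence yields
\begin{equation*}
\sum_{s\leq r:\ a(s)\text{ bad}}\frac{\psi^{-1}(a(s))}{a(s)}\leq \phi^{-1}(a(r)+1+O(1)).\qquad (\star)
\end{equation*}
If \eqref{eq:badgood} failed, there would be $\delta>0$ and arbitrarily large $r$ with more than $(1/2+\delta)r$ bad indices among $a(1),\dots,a(r)$, so at least $\delta r$ of them would lie in $(r/2,r]$. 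By monotonicity of $\psi^{-1}$ and $a$, each such index contributes at least $\psi^{-1}(a(\lceil r/2\rceil))/a(r)$ to the left-hand side of $(\star)$, giving $\delta r\cdot\psi^{-1}(a(\lceil r/2\rceil))/a(r)\leq \phi^{-1}(a(r)+1+O(1))$. The construction of $\psi$ and $a$ will be calibrated so that the ratio of the left-hand side to the right-hand side tends to $+\infty$, delivering the required contradiction.

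The main obstacle is the joint construction of $\psi$ and $a$: the three demands $\psi\leq\min(\phi,\sqrt{x})$, divergence of $\sum 1/\log\psi^{-1}(a(r))$ together with $a(r+1)-a(r)\to\infty$, and the growth condition $r\psi^{-1}(a(\lceil r/2\rceil))/(a(r)\phi^{-1}(a(r)))\to\infty$ are in genuine tension, especially when $\phi$ is near the integrability threshold (e.g.\ $\phi(x)=\log x/\log\log x$). Strengthening $\psi^{-1}$ to beat $\phi^{-1}(a(r))$ inflates $\log\psi^{-1}$ and thereby demands a denser $a$, which can then conflict with gaps tending to $\infty$. I plan to resolve this by a piecewise choice $\psi=\phi/C$ when $\phi\leq\sqrt{x}$ and $\psi(x)=x^{1/(2+\varepsilon)}$ otherwise, together with $a(r)=r\omega(r)$ for a slowly growing $\omega$ tailored to the tail behaviour of $1/\log\phi^{-1}$.
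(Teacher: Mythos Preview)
Your counting/pigeonhole skeleton is correct and is exactly what the paper does: a $\psi$-bad scale $k$ forces at least $\psi^{-1}(k)/k$ running-minimum drops in that scale, and summing these against the time budget $\phi^{-1}(a(r)+O(1))$ coming from the decay hypothesis shows that bad scales cannot have asymptotic density exceeding $1/2$ along the subsequence $a$. The genuine gap is in your construction of $\psi$. Taking $\psi=\phi/C$ for a fixed constant $C$ gives $\psi^{-1}(y)=\phi^{-1}(Cy)$, and your growth condition $r\,\psi^{-1}(a(\lceil r/2\rceil))/\bigl(a(r)\,\phi^{-1}(a(r))\bigr)\to\infty$ becomes $C^3\,r\,a(\lceil r/2\rceil)^3/a(r)^4\to\infty$ in the concrete case $\phi(x)=x^{1/3}$ (which satisfies $\phi\leq\sqrt{x}$, so your piecewise split does not help here). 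But any strictly increasing $a:\N\to\N$ has $a(r)\geq r$ and $a(r)\geq a(\lceil r/2\rceil)$, whence $a(r)^4\geq r\,a(\lceil r/2\rceil)^3$ and the ratio is bounded by $C^3$ for every $r$. Thus no choice of $a$ and no finite $C$ yields the required divergence, and the tension you correctly identified is not resolved by a constant dilation of $\phi$.

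The paper's remedy is to build $a$ into $\psi$ rather than choosing them independently. One first reduces to $\phi\leq\sqrt{x}$ and applies Lemma~\ref{lem:subseq}(2) to $f(k)=1/(1\vee\log\phi^{-1}(k))$ to obtain $a$ with $a(k)-a(k-1)\to\infty$ and $\sum_k 1/(1\vee\log\phi^{-1}(a(k)))=\infty$. Then one \emph{defines}
\[
\psi^{-1}(x)=8\,\phi^{-1}\!\bigl(a(8\,a^{-1}(x))\bigr),
\]
so that $\psi^{-1}(a(j))=8\,\phi^{-1}(a(8j))$. Evaluating at $j=\lfloor k/4\rfloor$ gives $\tfrac{1}{8}\psi^{-1}(a(\lfloor k/4\rfloor))=\phi^{-1}(a(8\lfloor k/4\rfloor))\geq\phi^{-1}(a(2k))$, and the comparison with $\phi^{-1}(a(k)+O(1))$ reduces to $a(2k)>a(k)+O(1)$, which holds for all large $k$ since the gaps of $a$ tend to infinity. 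Divergence of $\sum_k 1/(1\vee\log\psi^{-1}(a(k)))=\sum_k 1/(1\vee\log(8\phi^{-1}(a(8k))))$ follows from Lemma~\ref{lem:subseq}(1). The essential point you are missing is that $\psi^{-1}$ must amplify $\phi^{-1}$ by a factor that grows along the sequence $a$, and composing with $a(8\,a^{-1}(\cdot))$ is precisely what achieves this uniformly in $\phi$.
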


As mentioned above, the function $\psi$, which we think of as ``$\phi$ with some room'', is used to define the threshold for a drop on scale $k$ to be ``large'', with $z$ being $\psi$-good on scale $k$ precisely when a good proportion of the total decay on this scale comes from drops that are larger than this threshold. Meanwhile, the sequence $a$ is used to take a sparse sequence of spatial scales so that we can safely ignore dependencies between scales while keeping various series divergent so that we can still hope to conclude via Borel-Cantelli.

The proof of Lemma \ref{lem:decaydensity} will rely on the following elementary analytic facts.

\begin{lemma} \label{lem:subseq}
	Suppose that $f:\N\rightarrow[0,\infty)$ is a decreasing function satisfying
	$\sum_{n=1}^\infty f(n)=\infty$. 
	\begin{enumerate}
		\item  If $A\subseteq \N$ has positive density in the sense that $\liminf_{N\to\infty} \frac{1}{N} \sum_{n=1}^N \mathbbm{1}(n \in A)>0$ then 
		\[
		\sum_{n\in A} f(n)=\infty \qquad \text{ and } \qquad \liminf_{N\to\infty}\frac{\sum_{n=1}^N \mathbbm{1}(n \in A) f(n)}{\sum_{n=1}^N f(n)}>0.
		\]
		\item There exists a convex, strictly increasing function $a:\N\rightarrow\N$ with $\lim_{n\rightarrow\infty}a(n)-a(n-1)=\infty$ such that $\sum_{n=1}^\infty f(a(n))=\infty$.
	\end{enumerate} 
\end{lemma}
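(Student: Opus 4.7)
For Part 1, the plan is to use summation by parts. Set $C_N := |A\cap[1,N]|$, $h(N) := \sum_{n=1}^N \mathbbm{1}(n\in A) f(n)$, $g(N) := \sum_{n=1}^N f(n)$, and $\alpha := \liminf_N C_N/N > 0$. Abel summation gives the parallel representations
\begin{align*}
h(N) &= C_N f(N) + \sum_{n=1}^{N-1} C_n \bigl(f(n)-f(n+1)\bigr),\\
g(N) &= N f(N) + \sum_{n=1}^{N-1} n \bigl(f(n)-f(n+1)\bigr),
\end{align*}
both with non-negative weights $f(n)-f(n+1)$ since $f$ is decreasing. Fixing $\varepsilon>0$ and choosing $N_0$ so that $C_n \geq (\alpha-\varepsilon)n$ for all $n\geq N_0$, substituting this lower bound into the formula for $h(N)$ and bounding the finitely many $n<N_0$ terms crudely yields $h(N)\geq(\alpha-\varepsilon)g(N) - O(N_0 f(1))$. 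Since $g(N)\to\infty$ and $\varepsilon$ is arbitrary, both desired conclusions follow: $\liminf_N h(N)/g(N) \geq \alpha > 0$, and in particular $\sum_{n\in A} f(n) = \infty$.

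For Part 2, the plan is to construct $a$ recursively in terms of the partial sum function $F(N) := \sum_{n=1}^N f(n)$. Note that $f(n)>0$ for every $n$ (else $\sum f<\infty$), so $F$ is strictly increasing with $F(N)\to\infty$. Choose $a(0)\in\mathbb{N}$ large enough that $F(a(0))\geq e$, and set
\[
a(k+1) := a(k) + \bigl\lceil \log F(a(k)) \bigr\rceil.
\]
The increments $d_k := a(k)-a(k-1) = \lceil\log F(a(k-1))\rceil$ are then non-decreasing (so $a$ is convex) and satisfy $d_k\to\infty$. To verify $\sum_k f(a(k)) = \infty$, the key estimate, using that $f$ is decreasing, is
\[
d_k \cdot f(a(k-1)) \;\geq\; \sum_{n=a(k-1)+1}^{a(k)} f(n) \;=\; F(a(k)) - F(a(k-1)),
\]
which rearranges to $f(a(k-1)) \geq \bigl[F(a(k)) - F(a(k-1))\bigr]/\bigl[\log F(a(k-1))+1\bigr]$. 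Summing over $k$, the right-hand side is a left-endpoint Riemann sum in the variable $u=F(a(k))$ for $\int du/(\log u+1)$; since $1/(\log u+1)$ is decreasing, this Riemann sum lower-bounds the integral, which diverges.

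The main obstacle is Part 2, specifically identifying the right growth rate for the block widths $d_k$. Naive choices such as $a(k)=k^2$ or $a(k)=2^k$ already fail for sufficiently slowly-divergent $f$ (e.g.\ $f(n)=1/(n\log n)$), and even the natural strategy of choosing $a$ so that each block $[a(k-1),a(k)]$ captures unit $f$-mass produces $d_k$ that may grow so fast that $\sum 1/d_k$ converges. The correct rate is $d_k\approx\log F(a(k-1))$: this is slow enough that $\sum f(a(k))$ is controlled below by the divergent integral $\int du/(\log u+1)$, yet still forces $d_k\to\infty$. Part 1, by contrast, is essentially a routine computation once one writes out the Abel summation representation.
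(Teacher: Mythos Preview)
Your proof is correct, and both parts take genuinely different routes from the paper.

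For Part 1, the paper groups indices into geometric blocks $[k^{\ell-1},k^{\ell}]$ (with $k$ chosen so that $\alpha>2/k$), compares the block contributions of $A$ and of $\{1,\ldots,N\}$ using monotonicity of $f$, and arrives at the cruder bound $\liminf_N h(N)/g(N)\geq 1/k^2$. Your Abel-summation argument is shorter and in fact yields the sharp inequality $\liminf_N h(N)/g(N)\geq\alpha$.

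For Part 2, the paper builds $a$ greedily: in stage $i$ it uses step size $2^i$ and runs until the partial sum $\sum_{n=1}^{d_i} f(b_i+2^i n)$ first exceeds $1$ (the existence of such $d_i$ being guaranteed by Part 1 applied to the arithmetic progression with common difference $2^i$), then moves to step size $2^{i+1}$. Each stage therefore contributes at least $1$ to $\sum f(a(n))$, forcing divergence. Your construction is instead analytic: the adaptive gap $d_k=\lceil\log F(a(k-1))\rceil$ is tuned so that the telescoping bound $f(a(k-1))\geq (u_k-u_{k-1})/(\log u_{k-1}+1)$ feeds directly into the divergent integral $\int du/(\log u+1)$. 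The paper's approach has the advantage that Part 2 is an immediate corollary of Part 1; yours is self-contained and gives an explicit, nearly optimal growth rate for $a$.
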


\begin{proof}[Proof of Lemma \ref{lem:subseq}]
	Fix $f$ as in the statement of the lemma.
	We begin with the first statement. Let $A\subseteq \N$ be such that $\liminf_{N\to\infty} \frac{1}{N} \sum_{n=1}^N \mathbbm{1}(n \in A)>0$ and let $k$ be such that $\liminf_{N\to\infty} \frac{1}{N} \sum_{n=1}^N \mathbbm{1}(n \in A) > 2/k$, so that there exists $\ell_0$ such that $\sum_{n=k^{\ell-1}}^{k^\ell+1} \mathbbm{1}(n \in A) \geq 2 k^{\ell-1}-k^{\ell-1}= k^{\ell-1}$ for every $\ell \geq \ell_0$. Letting $N\geq k^{\ell_0+1}$ and setting $\ell_1 = \lfloor \log_k N\rfloor$, we have that
	\[\sum_{n=k^{\ell_0}+1}^{N} \mathbbm{1}(n\in A)f(n) \geq \sum_{\ell=\ell_0}^{\ell_1-1} f(k^{\ell+1}) \sum_{n=k^{\ell}+1}^{k^{\ell+1}}\mathbbm{1}(n\in A) \geq   \sum_{\ell=\ell_0}^{\ell_1-1} k^\ell f(k^{\ell+1})\]
and that
	\[
\sum_{n=k^{\ell_0}+1}^{N} f(n) \leq \sum_{\ell=\ell_0}^{\ell_1} k^{\ell+1} f(k^{\ell}) \leq k^{\ell_0+1} f(k^{\ell_0}) + k^2 \sum_{\ell=\ell_0}^{\ell_1-1} k^\ell f(k^{\ell+1}).
	\]
	Since $f$ was assumed to be divergent, it follows that 
	\[
\liminf_{N\to\infty}\frac{\sum_{n=1}^N \mathbbm{1}(n \in A) f(n)}{\sum_{n=1}^N f(n)}\geq \frac{1}{k^2}>0,
	\]
	and hence that $\sum_{n\in A} f(n)=\infty$ as claimed.
	
	We now prove the second statement. The first statement implies that for any $a,b\geq 1$ there exists $m=m(a,b)\geq 1$ such that
	$\sum_{n=1}^m f(a+bn)\geq1$.
This fact allows us to recursively construct a pair of integer sequences $(b_i)_{i\geq 0}$ and $(d_i)_{i\geq 0}$ by setting $b_0=0$ and recursively defining 
	\[d_i = \min\Bigl\{m:\sum_{ n=1}^mf(b_{i}+2^in)\geq1\Bigr\} \qquad \text{ and } \qquad b_{i+1}=b_i+2^i d_i \qquad \text{ for each $i\geq 0$},\]
	and we observe that both $d_i$ and $b_i$ must be finite for $i\geq 0$.
	We must then have
	\[
	\sum_{i=1}^\infty\sum_{n=1}^{d_i}f(b_{i}+2^in) = \sum_{n = 1}^\infty f(a(n)) = \infty,
	\]
	where $a(n)$ is the convex, strictly increasing sequence defined by
	\[
	(a(1),a(2),\ldots) = (b_1+2,b_1+2\cdot 2,\ldots,b_1+2 \cdot d_1,b_2+2^2,b_2+2^2\cdot 2,\ldots,b_2+2^2 \cdot d_2,b_3+2^3, b_3 + 2^3 \cdot 2,\ldots)\]
	This sequence has increasing increments tending to infinity by construction, completing the proof.
\end{proof}

We now apply Lemma \ref{lem:subseq} to prove Lemma \ref{lem:decaydensity}.

\begin{proof}[Proof of Lemma \ref{lem:decaydensity}]
We may assume without loss of generality that $\phi(x) \leq \sqrt{x}$ for every $x\geq 0$, replacing $\phi$ with $ \tilde \phi=\min\{\phi,\sqrt{x}\}$ otherwise. Indeed, since $\phi$ is increasing, we have that
$\tilde \phi^{-1}(y) \leq \max\{\phi^{-1}(y),y^2\}$, and we have by the Cauchy condensation test that
\[
\sum_{k=1}^\infty \frac{1}{1 \vee \log \max\{\phi^{-1}(k),k^2\}} = \infty \quad \text{ if and only if }\quad \sum_{k=1}^\infty \frac{2^k}{1\vee \max\{\log \phi^{-1}(2^k),k\log 4\}} = \infty.
\]
If there are infinitely many $k$ such that $4^k \geq \phi^{-1}(2^k)$ then the right hand series trivially diverges, while if not then it diverges as a consequence of the Cauchy condensation test applied to $\sum_{k=1}^\infty \frac{1}{1\vee\log \phi^{-1}(k)}$.

	We begin by applying Lemma \ref{lem:subseq} to the function $f(k)=1/1\vee\log\phi^{-1}(k)$ to give a strictly increasing function $a(k):\N\rightarrow\N$ such that $\lim_{k\rightarrow\infty} a(k)-a(k-1)=\infty$ and 
	\begin{equation}\label{eq:lemmaapp}
	\sum_{k=1}^\infty \frac{1}{1 \vee \log\phi^{-1}(a(k))}=\infty.
	\end{equation} 
Extend $a$ arbitrarily to an increasing bijection $a:[0,\infty)\to [0,\infty)$ and 
define $\psi:[0,\infty)\to[0,\infty)$ to be the inverse of the increasing bijection
\[
\psi^{-1}(x) =8\phi^{-1}\Bigl(a\left(8a^{-1}(x)\right)\Bigr),
\]
so that $\psi$ is strictly increasing, bounded above by $\phi$, and satisfies
	\[
\sum_{k=1}^\infty \frac{1}{1\vee\log(\psi^{-1}(a(k)))} = \sum_{k=1}^\infty \frac{1}{1\vee(\log 8+\log(\phi^{-1}(a(8k))))}=\infty
	\]
	by Lemma \ref{lem:subseq}. Since $\phi$ and $a$ are increasing and $\phi(x) \leq \sqrt{x}$ for every $x\geq 0$ we also have that $\psi^{-1}(x)\geq 8 x^2$ and $\psi(x)\leq \sqrt{x}$ for every $x \geq 0$.

	Let $(z_n)_{n\geq 0}$ be a sequence of positive numbers and let $C$ be such that $z_n\leq Ce^{-\phi(n)}$ for every $n\geq 1$. We will use the notation of 
	Definition \ref{def:drops}. 
	Observe that if $k\geq k_0$ is such that $z$ is \emph{not} $\psi$-good on scale $k$ then we must have that
	\[
\#\left\{i:\frac{d_{i+1,k}}{d_{i,k}}>\exp\left[{-\frac{1}{2\psi^{-1}(k)}}\right]\right\}>\frac{\log e^{-1/2}}{\log \exp(-k/(2\psi^{-1}(k))}=\frac{1}{k}\psi^{-1}(k).
	\]
	Discounting the endpoints of the interval, it follows that
	\[
	\#\bigl(\{z_m^*: m \geq 0\}\cap (e^{-k-1},e^{-k})\bigr)>\frac{1}{k}\psi^{-1}(k)-2 \geq \frac{1}{2k}\psi^{-1}(k)
	\]
	whenever $z$ is not $\psi$-good on scale $k$, where we used that $\psi^{-1}(k)\geq 8k^2$ in the final inequality.
	Let $B$ be the set of positive integers $k\geq k_0$ such that $z$ is not $\psi$-good on scale $a(k)$ and let
	\[
	A=\left\{k \geq k_0 : \frac{1}{k}|B \cap \{k_0,\ldots,k\}| \geq \frac{1}{2} \right\}.
	\]
	We wish to prove that $A$ is finite, and observe that $A$ is finite if and only if $A \cap B$ is finite.
	For each $k \in A \cap B$ with $k \geq 4 k_0$, we have that $|B \cap \{\lfloor k/4 \rfloor, \ldots k\}| \geq k/4$ and hence, since $\psi^{-1}$ is increasing, that
	\begin{align*}|\{n : z_n \geq e^{-a(k)-1}\}| 
	&\geq \sum_{i\in B\cap\{k_0,\ldots,k\}} \abs{\{z_m^*\}\cap (e^{-a(i)-1},e^{-a(i)})}
	\\&\geq \frac{1}{2}\sum_{i\in B\cap\{k_0,\ldots,k\}} \frac{1}{k}\psi^{-1}(a(i)) \geq \frac{1}{8} \psi^{-1}(a(\lfloor k/4 \rfloor)).
	\end{align*}
As such, for each $k \in A \cap B$ with $k \geq 4 k_0 \geq 4$, there must exist $n\geq \frac{1}{8} \psi^{-1}(a(\lfloor k/4 \rfloor))$ such that $z_n \geq e^{-a(k)-1}$.
On the other hand, for such $n$, we also have that
\[
\phi(n) \geq \phi\left(\frac{1}{8} \psi^{-1}(a(\lfloor k/4 \rfloor)) \right) = a\left(8\lfloor k/4 \rfloor\right) \geq a(2k),
\]
and since $a(2k)-a(k) \to\infty$ and $\limsup_{n\to\infty} e^{\phi(n)} z_n < \infty$, we deduce that $A \cap B$ is finite as claimed.
\end{proof}

\subsection{Proof of Theorem \ref{thm:kcuts}}
\label{subsec:mainproof}

We now apply the deterministic tools from Section \ref{subsec:deterministic} to prove Theorem \ref{thm:kcuts}. Let us fix for the remainder of this subsection a transient Markov chain with killing $M=(\Omega,P,\dagger)$ with distinguished origin vertex $o$ such that $M$ is irreducible, locally finite, and satisfies $P(x,x)=0$ for every $x\neq \dagger$. Fix $X_0 \in \Omega \setminus \{\dagger\}$, let $(\mathcal{X}_t)_{t\geq 0}$ be the drawbridge process started at $(X_0,X_0)$, let $(X_n)_{n\geq 0}$ be the associated discrete-time trajectory of the Markov process, and let $(\mathcal{Z}_t)_{t\geq 0}$ and $(Z_n)_{n\geq 0}$ be the associated continuous- and discrete-time hitting probability processes as defined in \cref{subsec:drawbridge}. We write $\mathbf{P}$ and $\mathbf{E}$ for probabilities and expectations taken with respect to the joint law of these processes.

Fix an increasing bijection $\phi:[0,\infty)\to[0,\infty)$ as in the statement of the theorem, and let $a$ and $\psi$ be as in Lemma \ref{lem:decaydensity}. For each $k\geq 1$ and $0\leq i \leq N_k-1$ we say that the drop $(d_{i,k},d_{i+1,k})$ is a \textbf{large drop} if
\[
\frac{d_{i+1,k}}{d_{i,k}} \leq \Psi(k):=\exp\left[-\frac{k}{2\psi^{-1}(k)}\right]
\]
and there exists $n$ such that $Z_n^*=d_{i+1,k}$. (The latter condition can fail if $d_{i+1,k}=e^{-k-1}$.) If $(d_{i,k},d_{i+1,k})$ is a large drop and $Z$ first hits $d_{i+1,k}$ at some time $n$, we say that $(d_{i,k},d_{i+1,k})$
is a \textbf{large permadrop} if we have additionally that
\[
\mathcal{Z}_t < d_{i,k} \text{ for every $t \geq \mathcal{T}(n)$.}
\]
We say that an arbitrary pair of values $(a,b)$ in $[e^{-k-1},e^{-k}]$ with $a>b$ is a large drop or large permadrop if $(a,b)=(d_{i,k},d_{i+1,k})$ for some large drop or large permadrop $(d_{i,k},d_{i+1,k})$ as appropriate.

Given $k \geq k_0$ and $i\geq 1$, we write $\tau_{i}=\tau_{i,k}$ for the $i$th time the discrete process $Z$ reaches a new running minimum smaller than $e^{-k}$, and write $\mathcal{T}_i=\mathcal{T}(\tau_{i,k})$ for the corresponding time for the continuous process $\mathcal{Z}$, noting that $\mathcal{T}_{i,k}$ is a stopping time for $\mathcal{X}$ for each $i,k\geq 1$. Note that when $1\leq i \leq N_k$, $\tau_{i,k}$ can be defined equivalently as the first time that $Z_n \leq d_{i,k}$. 
Recall that if $\rho$ is a stopping time for $\mathcal{X}$ then $\mathcal{F}_\rho$ denotes the $\sigma$-algebra generated by $(\mathcal{X}_t)_{t=0}^{\rho}$.
 Given such a stopping time, we lighten notation by writing $\mathbf{E}_\rho[{}\cdot{}]=\mathbf{E}[\ \cdot\mid\mathcal{F}_\rho]$ and $\mathbf{P}_\rho[{}\cdot{}]=\mathbf{P}[\ \cdot\mid\mathcal{F}_\rho]$.

The following two estimates on the distribution of the random variable
$R_k := \#\{0\leq i \leq N_{k}-1 : (d_{i,k},d_{i+1,k}) \text{ is a large permadrop}\}$
lie at the heart of the paper. 

\begin{proposition} \label{prop:k_lower} The estimate
	\begin{equation} \label{eq:UBN}
		\mathbf{E}_{\mathcal{T}_{1,k}}[R_k]\geq \frac{1}{4} \mathbf{P}_{\mathcal{T}_{1,k}}\Bigl(\text{\emph{$Z$ is $\psi$-good on scale $k$ and there exists $n$ such that $e^{-k-1} < Z_n^* \leq e^{-k-3/4}$}}\Bigr)
	\end{equation}
	holds almost surely for every $k\geq 1$.
\end{proposition}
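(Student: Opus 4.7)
The plan is to combine an exact optional-stopping identity for each individual large drop with the deterministic estimate on the total decay contributed by large drops coming from the $\psi$-good condition.

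For each large drop $(d_{i,k},d_{i+1,k})$ --- which by definition requires $d_{i+1,k}$ to be a genuine running minimum of $Z$ --- let $\sigma_i$ denote the continuous-time first hitting time of the level $d_{i+1,k}$ by $\mathcal{Z}$. I would apply the optional stopping theorem to the bounded continuous martingale $(\mathcal{Z}_t)_{t\geq\sigma_i}$ stopped at $\tau' = \inf\{t \geq \sigma_i : \mathcal{Z}_t \geq d_{i,k}\}$. Continuity of $\mathcal{Z}$ yields $\mathcal{Z}_{\tau'}=d_{i,k}$ on $\{\tau'<\infty\}$, while on $\{\tau'=\infty\}$ the process $\mathcal{X}$ cannot revisit $o$ (else $\mathcal{Z}$ would jump to $1>d_{i,k}$), so by transience $\mathcal{Z}_t\to 0$ almost surely. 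Dominated convergence then yields the exact identity $\mathbf{P}_{\sigma_i}(\text{permadrop at $i$}) = 1 - d_{i+1,k}/d_{i,k}$; combined with the tower property and linearity of conditional expectation this gives
\[
\mathbf{E}_{\mathcal{T}_{1,k}}[R_k] \;=\; \mathbf{E}_{\mathcal{T}_{1,k}}\!\left[\sum_{i \in L_k}\!\left(1 - \frac{d_{i+1,k}}{d_{i,k}}\right)\right],
\]
where $L_k\subseteq\{0,\dots,N_k-1\}$ denotes the random set of large-drop indices.

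It then suffices to show the pointwise deterministic bound $\sum_{i\in L_k}(1-d_{i+1,k}/d_{i,k}) \geq 1/4$ on the event appearing in the proposition. Being $\psi$-good gives that the log-sum $\sum \log(d_{i,k}/d_{i+1,k})$ taken over \emph{all} indices with $d_{i+1,k}/d_{i,k}\leq \Psi(k)$ --- including the last drop $(d_{N_k-1,k},e^{-k-1})$ ending at the artificial endpoint --- is at least $1/2$. The extra condition $\exists n:\,e^{-k-1}<Z_n^*\leq e^{-k-3/4}$ forces $d_{N_k-1,k}\leq e^{-k-3/4}$ and hence caps the log-contribution of this potentially fake last drop at $\log(e^{-k-3/4}/e^{-k-1})=1/4$; therefore the genuine large drops indexed by $L_k$ contribute at least $1/4$ to the log-sum. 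Combined with the elementary inequality $\sum_i(1-r_i)\geq 1-\prod_i r_i$ (proved by induction from $(1-r_i)(1-r_j)\geq 0$), this yields the desired lower bound after an elementary estimate.

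The main obstacle is the bookkeeping around the artificial endpoint $e^{-k-1}$ in the definition of $D_k$: since this is generically not an actual running minimum, one must carefully distinguish between drops ending at genuine minima --- the only ones that can contribute to $R_k$ --- and the possibly fake last drop, which nevertheless enters the $\psi$-good accounting. The choice of $e^{-k-3/4}$ in the extra condition is precisely calibrated to cap the fake contribution so that a definite positive share of the large-drop budget remains for $L_k$. A secondary technical point is justifying the optional stopping argument across possible excursions of $\mathcal{Z}$, which uses only the continuity of $\mathcal{Z}$ and its boundedness in $[0,1]$ together with dominated convergence.
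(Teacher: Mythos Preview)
Your approach is essentially the same as the paper's: both compute the conditional permadrop probability exactly as $1-d_{i+1,k}/d_{i,k}$ via optional stopping for the continuous process, then exploit the $\psi$-good condition together with the bound $d_{N_k-1,k}/d_{N_k,k}\leq e^{1/4}$ (coming from the hypothesis $\exists n:\,e^{-k-1}<Z_n^*\leq e^{-k-3/4}$) to control the artificial last drop and leave a guaranteed log-budget of at least $1/4$ for the genuine large drops.

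One minor point worth flagging: your Weierstrass-type inequality $\sum_i(1-r_i)\geq 1-\prod_i r_i$ combined with $\prod_{i\in L_k} r_i\leq e^{-1/4}$ only yields $\sum_{i\in L_k}(1-r_i)\geq 1-e^{-1/4}\approx 0.221$, which falls just short of the stated constant $1/4$ (and indeed the pointwise bound $\sum_{i\in L_k}(1-r_i)\geq 1/4$ can fail, e.g.\ when there is a single genuine large drop with ratio exactly $e^{-1/4}$). The paper's proof has the same issue at this step --- its stated inequality ``$1-x\geq\log x$'' goes the wrong way for the purpose at hand --- so this is a shared slip in the constant rather than a gap in your reasoning. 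Since only \emph{some} positive constant is needed downstream (it is absorbed into the universal constant $c$ in the proof of Theorem~2.1), replacing $1/4$ by $1-e^{-1/4}$ throughout is harmless.
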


\begin{proposition} \label{prop:k}
There exists a universal constant $C$ such that the estimate
	\begin{equation} \label{eq:LBN}
		\mathbf{E}_{\mathcal{T}_{1,k}}[R_k]\leq \left(2+C\log\frac{2\psi^{-1}(k)}{k}\right)\mathbf{P}_{\mathcal{T}_{1,k}} (R_k\geq 1)
	\end{equation}
	holds almost surely for every $k\geq 1$.
\end{proposition}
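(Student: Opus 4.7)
My plan is to condition on $\mathcal{F}_{\mathcal{T}_{1,k}}$ throughout and exploit the martingale property of the continuous-time hitting probability process $(\mathcal{Z}_t)_{t\geq\mathcal{T}_{1,k}}$, which is bounded in $[0,1]$ and converges almost surely to $0$ by transience. Writing $M = 2\psi^{-1}(k)/k$, so that $\Psi(k)=e^{-1/M}$, we have the deterministic bound $R_k \leq M$ since each large permadrop accounts for at least $1/M$ of the total logarithmic range $\log(e^{-k}/e^{-k-1}) = 1$ of the scale.

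The starting point is the optional stopping identity obtained by applying the optional stopping theorem to $\mathcal{Z}_t$ at the first visit to $d_{i+1}$:
\[
\mathbf{P}[B_i \mid \mathcal{F}_{\mathcal{T}_{1,k}}] = \mathbf{E}\bigl[\mathbbm{1}[\text{$i$-th drop is large}]\,(1 - d_{i+1}/d_i) \bigm| \mathcal{F}_{\mathcal{T}_{1,k}}\bigr].
\]
Summing over $i$, using $1-x\leq -\log x$ and the telescoping identity $\sum_i \log(d_i/d_{i+1}) = 1$, this yields the a priori bound $\mathbf{E}_{\mathcal{T}_{1,k}}[R_k] \leq 1$ almost surely, which already settles the proposition whenever $\mathbf{P}_{\mathcal{T}_{1,k}}[R_k\geq 1] \geq (2+C\log M)^{-1}$. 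The remaining, and interesting, regime is when this probability is smaller and $\mathbf{E}_{\mathcal{T}_{1,k}}[R_k]$ must be shown to scale proportionally with $\mathbf{P}_{\mathcal{T}_{1,k}}[R_k \geq 1]$ up to a $\log M$ factor.

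To handle the small-probability regime, my plan is to establish the harmonic tail estimate $\mathbf{P}_{\mathcal{T}_{1,k}}[R_k \geq r] \leq (C/r)\,\mathbf{P}_{\mathcal{T}_{1,k}}[R_k \geq 1]$ for every $2 \leq r \leq M$, from which the layer-cake identity
\[
\mathbf{E}_{\mathcal{T}_{1,k}}[R_k] = \sum_{r \geq 1} \mathbf{P}_{\mathcal{T}_{1,k}}[R_k \geq r] \leq \mathbf{P}_{\mathcal{T}_{1,k}}[R_k \geq 1]\Bigl(1 + C\sum_{r=2}^{M}\tfrac{1}{r}\Bigr) \leq (2+C'\log M)\,\mathbf{P}_{\mathcal{T}_{1,k}}[R_k \geq 1]
\]
yields the claim. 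To prove the tail estimate, I would parametrize each length-$r$ permadrop configuration by its levels $(U_s,V_s)_{s=1}^r = (d_{J_s}, d_{J_s+1})_{s=1}^r$ and apply iterated optional stopping (using the strong Markov property at the successive first-hit times $\sigma_s$ of $V_s$) to derive the exact conditional identity
\[
\mathbf{P}[B_{J_1}\cap\cdots\cap B_{J_r} \mid \mathcal{F}_{\sigma_r},\text{levels}] = \prod_{s=1}^{r-1}\frac{U_s - V_s}{U_s - V_{s+1}}\cdot\Bigl(1 - \frac{V_r}{U_r}\Bigr).
\]

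The main obstacle, and the technical heart of the argument, is to convert this length-$r$ product into the harmonic bound $C/r$ times a length-$1$ permadrop probability. My expectation is that this requires a symmetrization-type argument in which each length-$r$ configuration is compared to the $r$ length-$1$ configurations obtained by selecting a single permadrop from it; averaging over these selections should distribute the joint probability into $r$ contributions to $\mathbf{P}_{\mathcal{T}_{1,k}}[R_k \geq 1]$, producing the factor $1/r$. Making this rigorous requires careful bookkeeping over the joint distribution of the random levels $(U_s,V_s)$, respecting the constraint $\sum_s \log(U_s/V_s) \leq 1$ inherited from the scale having log-length $1$, which is ultimately where the logarithmic dependence on $M$ enters.
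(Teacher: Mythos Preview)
Your a priori bound $\mathbf{E}_{\mathcal{T}_{1,k}}[R_k]\leq 1$ is correct, and the layer-cake reduction is sound: the harmonic tail estimate $\mathbf{P}_{\mathcal{T}_{1,k}}[R_k\geq r]\leq (C/r)\,\mathbf{P}_{\mathcal{T}_{1,k}}[R_k\geq 1]$ would indeed yield the proposition. But there is a genuine gap at exactly this point. First, the iterated optional-stopping identity is not well-posed as written: conditioning on $\mathcal{F}_{\sigma_r}$ already reveals whether each of the first $r-1$ permadrop conditions was violated before time $\sigma_r$, so the conditional probability is $\mathbbm{1}(\text{not yet violated})\cdot(1-V_r/U_r)$, not the product you give; the product arises only after a further tower of conditional expectations, and then only inside an expectation over the random levels. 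More seriously, the ``symmetrization-type argument'' that is meant to convert the length-$r$ product into $C/r$ times a single-permadrop probability is never carried out, and the natural version of it is circular. Mapping each length-$r$ permadrop configuration to its $r$ singletons and averaging yields precisely the trivial Markov bound $\mathbf{P}_{\mathcal{T}_{1,k}}[R_k\geq r]\leq \tfrac{1}{r}\mathbf{E}_{\mathcal{T}_{1,k}}[R_k]$, which after summing gives $\mathbf{E}_{\mathcal{T}_{1,k}}[R_k]\leq 1+\log M$ but \emph{not} a bound proportional to $\mathbf{P}_{\mathcal{T}_{1,k}}[R_k\geq 1]$. Getting the tail relative to $\mathbf{P}[R_k\geq 1]$ rather than $\mathbf{E}[R_k]$ is the entire content of the proposition, and nothing in your outline addresses it.

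The paper avoids tail bounds altogether and takes a different route. It conditions on the identity $(a,b)$ of the \emph{first} large permadrop and proves, uniformly in $(a,b)$, that
\[
\mathbf{E}_{\mathcal{T}_b}\bigl[\mathbbm{1}((a,b)\text{ is the first large permadrop})\,R''\bigr]\;\leq\; C\log M\cdot \mathbf{P}_{\mathcal{T}_b}\bigl((a,b)\text{ is the first large permadrop}\bigr),
\]
where $R''$ counts the subsequent large permadrops. The key device is a deterministic ``tightening'' lemma: the random collection $L$ of later large drops is majorized by repeatedly splitting any drop $(x,y)$ with $y/x<\Psi^2$ into $(x,\Psi x)$ and $(\Psi x,y)$, until every drop has ratio in $[\Psi^2,\Psi]$. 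This only increases the functional $F(S)=\sum_{(x,y)\in S}(1-y/x)\,\mathbbm{1}(\mathcal{Z}\text{ hits }y\text{ before }a)$, and for the tightened set one has the pointwise bound $F(S)\leq (1-\Psi^2)\sum_{i=1}^{\lceil -1/\log\Psi\rceil}\mathbbm{1}(\mathcal{Z}\text{ hits }\Psi^i b\text{ before }a)$. Optional stopping then converts the right-hand side into the explicit sum $(1-\Psi^2)\sum_i (a-b)/(a-\Psi^i b)$, which for $b/a\leq\Psi$ is at most a constant times $\log M$ times $(a-b)/a=\mathbf{P}_{\mathcal{T}_b}[(a,b)\text{ is a permadrop}\mid K]$. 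Summing over the countably many candidate first permadrops $(a,b)$ finishes the argument. The $\log M$ thus enters as the length of a deterministic harmonic series arising from the canonical tightened configuration, not from any tail analysis of $R_k$.
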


\begin{proof}[Proof of Proposition \ref{prop:k_lower}]
To lighten notation, we drop $k$s from subscripts wherever possible. We can use the optional stopping theorem to compute
\begin{align}
		\mathbf{E}_{\mathcal{T}_1}[R]&\geq
		\mathbf{E}_{\mathcal{T}_1} 
		\left[ \sum_{i \geq 0} \mathbbm{1}(Z_{\tau_{i+1}} > e^{-k-1})\mathbf{P}_{\mathcal{T}_{i+1}} ((d_i,d_{i+1}) \text{ is a large permadrop}) \right]
		\nonumber\\  &= \mathbf{E}_{\mathcal{T}_1} \left[\sum_{i=0}^{N_{k}-2} \left( 1-\frac{d_{i+1}}{d_i}\right)\mathbbm{1}\left(\frac{d_{i+1}}{d_i}\leq \Psi\right)\right]
		\label{eq:OptionalStopping1}
\end{align}
and applying the inequality $1-x\geq\log x$ yields that
\begin{align}
		\mathbf{E}_{\mathcal{T}_1}[R]&\geq \mathbf{E}_{\mathcal{T}_1}\left[\log \prod\left\{\frac{d_{i}}{d_{i+1}}:0\leq i\leq N_{k}-2,\frac{d_{i+1}}{d_i}\leq \Psi\right\}\right].
		\label{eq:Elogprod}
	\end{align}
	When $Z$ is $\psi$-good on scale $k$ and there exists $n$ such that $e^{-k-1} < Z_n^* \leq e^{-k-3/4}$ we have that
	\[
\prod\left\{\frac{d_{i}}{d_{i+1}}:0\leq i\leq N_{k}-1,\frac{d_{i+1}}{d_i}\leq \Psi\right\} \geq e^{1/2} \qquad \text{ and } \qquad \frac{d_{N_k-1}}{d_{N_k}} \leq e^{1/4},
	\]
so that the claim follows from \eqref{eq:Elogprod}.
\end{proof}

\begin{proof}[Proof of Proposition \ref{prop:k}]
	We fix a scale $I_k=[e^{-k-1},e^{-k}]$ and calculate the conditional expectation of the number of large permadrops on that scale given there is at least one.  Since $k\geq 1$ is fixed throughout, we will drop it from notation when possible. We condition on the location on the first permadrop being $(a,b)\subset I_k$ as follows.
	Let $R^\prime$ be the number of large permadrops in the scale excluding possibly the last drop, so that
	\[
		R^\prime=\sum_{0\leq i\leq N_k-2} \mathbbm{1}((d_i,d_{i+1}) 
		\text{ is a large permadrop}),
	\] 
	and let $R''=(R'-1)\vee 0$ be the amount that $R'$ exceeds $1$.
	  Given an arbitrary pair $ e^{-k-1}\leq b<a\leq e^{-k}$, we say that $(a,b)$ is the \textbf{first large permadrop} if $(a,b)=(d_i,d_{i+1})$ for some $0\leq i \leq N_k-1$ such that the pair $(d_i,d_{i+1})$ is a large permadrop and $(d_j,d_{j+1})$ is not a large permadrop for any $j<i$. We write $\tau_b$ for the first time $Z_n$ hits $b$ (letting $\tau_b=\infty$ if this never occurs), write $\mathcal{T}_b=\mathcal{T}(\tau_b)$, and seek to upper bound the conditional expectation
	  \[
\mathbf{E}_{\mathcal{T}_b}\left[\mathbbm{1}((a,b) \text{ is the first large permadrop}) R'' \right].
	  \]
	  If $b= e^{-k-1}$ then this conditional expectation is zero, so assume $b> e^{-k-1}$.

	    Note that if a large drop $(d_i,d_{i+1})$ is \emph{not} a permadrop then there must exist a \textit{recovery time} at which $\mathcal{Z}$ hits $d_i$ for the first time after $\mathcal{T}_{i+1}$. Let $L=L(a,b)=\{(d_i,d_{i+1}):1\leq i \leq N_k-2$, $d_i\leq b$, and $d_{i+1}/d_i \leq \Psi(k)\}$ be the set of large drops on the scale $k$ after $(a,b)$ and possibly excluding the last drop, let $K=K(a,b)$ be the event that $(a,b)$ is a drop and that every previous drop in the scale recovers before time $\mathcal{T}_b$, and observe that 
	   \begin{multline*}
	   \mathbbm{1}((a,b) \text{ is the first large permadrop}) R'' \\= \mathbbm{1}(K)\mathbbm{1}((a,b) \text{ is a large permadrop}) \sum_{(d_i,d_{i+1})\in L} \mathbbm{1}((d_i,d_{i+1}) \text{ is a large permadrop}).
	   \end{multline*}
	   Since $\mathcal{T}_i$ is a stopping time for each $i\geq 1$, we can use the optional stopping theorem to compute that if $b/a\leq \Psi(k)$ then
	\begin{align}
		&\mathbf{E}_{\mathcal{T}_b} \left[\mathbbm{1}((a,b)\text{ is the first large permadrop})R^{\prime\prime}\right]
		\nonumber\\&\hspace{3cm}= \mathbf{E}_{\mathcal{T}_b}\biggl[\mathbbm{1}(K)\sum_{(d_i,d_{i+1})\in L}\mathbbm{1}((a,b) \text{ and } (d_i,d_{i+1}) \text{ are permadrops})\biggr] 
		\nonumber\\
		&\hspace{3cm}= \mathbf{E}_{\mathcal{T}_b}\bigg[\mathbbm{1}(K)\sum_{(d_i,d_{i+1})\in L}\mathbbm{1}(\mathcal{Z}_t<a \text{ for }t\in(\mathcal{T}_b,\mathcal{T}_{i+1}) \text{ and }\mathcal{Z}_t<d_i \text{ for } t>\mathcal{T}_{i+1})\bigg]
		\nonumber\\
		&\hspace{3cm}= \mathbf{E}_{\mathcal{T}_b}\bigg[\mathbbm{1}(K)\sum_{(d_i,d_{i+1})\in L}\bigl(1-\frac{d_{i+1}}{d_i}\bigr)\mathbbm{1}\bigl(\mathcal{Z}_t<a \text{ for }t\in(\mathcal{T}_b,\mathcal{T}_{i+1})\bigr) \bigg] 
		\nonumber\\ &\hspace{3cm}=
		\mathbf{E}_{\mathcal{T}_b}\bigg[\mathbbm{1}(K)\sum_{(d_i,d_{i+1})\in L}\big(1-\frac{d_{i+1}}{d_i}\big)\mathbbm{1}((\mathcal{Z}_t)_{t>\mathcal{T}_b} \text{ hits }d_{i+1}\text{  before } a ) \bigg],
		\label{eq:first_opt}
	\end{align}
	where the third equality follows by taking conditional expectations with respect to $\mathcal{F}_{\mathcal{T}_{i+1}}$ for the term in the summation corresponding to $(d_i,d_{i+1})$ and then applying optional stopping.

Write $\Psi=\Psi(k)$ and let $\mathscr{L}=\mathscr{L}(b)$ be the set of all finite sets $S$ of ordered pairs  of numbers in $[e^{-k-1},e^{-k}]$ satisfying the following conditions:
\begin{enumerate}
	\item If $(x,y) \in S$ then $x\leq b$ and $y/x \leq \Psi$.
	\item If $(x,y)$ and $(z,w)$ are distinct elements of $S$ then the open intervals $(y,x)$ and $(w,z)$ are disjoint.
\end{enumerate}
 If we consider the (random) function $F:\mathscr{L}\to[0,\infty)$ defined by
	\[
	F(S)=\sum_{(x,y)\in S}\big(1-\frac{y}{x}\big)\mathbbm{1}((\mathcal{Z}_t)_{t>\mathcal{T}_b} \text{ hits }y\text{ before } a ),
	\]
	then we can rewrite \eqref{eq:first_opt} as
	\begin{equation}\label{eq:rewritten_in_terms_of_F}
		\mathbf{E}_{\mathcal{T}_b}\left[\mathbbm{1}((a,b)\text{ is the first permadrop})R_k^{\prime\prime}\right]\leq \mathbf{E}_{\mathcal{T}_b}\left[\mathbbm{1}(K)F(L)\right],
	\end{equation}
and we  claim that the inequality
	\begin{equation} \label{eq:deterministic_inequality}
	F(S)\leq (1-\Psi^2) \sum_{i=1}^{\lceil-1/\log \Psi\rceil}\mathbbm{1}((\mathcal{Z}_t)_{t>\mathcal{T}_b} \text{ hits }\Psi^i b\text{ before } a ),
	\end{equation}
	is satisfied deterministically for every $S\in \mathscr{L}$. 

	Before proving the claimed inequality \eqref{eq:deterministic_inequality}, let us first see how it implies \eqref{eq:LBN}. 
	Substituting \eqref{eq:deterministic_inequality} into \eqref{eq:rewritten_in_terms_of_F} yields that
	\begin{align*}
	\mathbf{E}_{\mathcal{T}_b} \left[\mathbbm{1}((a,b)\text{ is the first permadrop})R^{\prime\prime}\right]
	&\leq (1-\Psi^2)\mathbf{E}_{\mathcal{T}_b}{\mathbbm{1}(K)\sum_{i=1}^{\lceil-1/\log \Psi\rceil}\mathbbm{1}((\mathcal{Z}_t)_{t>\mathcal{T}_b} \text{ hits }\Psi^i b\text{ before } a )}
	\\
	&= (1-\Psi^2)\mathbf{E}_{\mathcal{T}_b}\left[\mathbbm{1}(K)\sum_{i=1}^{\lceil-1/\log \Psi\rceil} \frac{a-b}{a-\Psi^i b}\right] 
	\\
	&= (1-\Psi^2) \frac{a-b}{a} \mathbbm{1}(K) \sum_{i=1}^{\lceil-1/\log \Psi\rceil} \frac{1}{1-(b/a)\Psi^i },
	\end{align*}
	where we have applied optional stopping in the second inequality. 
	Since $1/(1-x\Psi^i)$ is an increasing function of $x$ it follows that if $b\leq \Psi a$ then
	\begin{align*}
	\mathbf{E}_{\mathcal{T}_b} \left[\mathbbm{1}((a,b)\text{ is the first permadrop})R^{\prime\prime}\right]
	&\leq \frac{a-b}{b}\mathbbm{1}(K) (1-\Psi^2) \sum_{i=1}^{-\lceil 1/\log \Psi \rceil} \frac{1}{1-\Psi^{i+1}}.
	\end{align*}
	Now, we have by calculus that $1-\Psi^{i+1} \geq (i+1) (1-\Psi)$ for every $1\leq i \leq -\lceil 1/\log \Psi\rceil$ and hence that there exists a universal constant $C$ such that
	\[\mathbf{E}_{\mathcal{T}_b} \left[\mathbbm{1}((a,b)\text{ is the first permadrop})R^{\prime\prime}\right]
	\leq
	 \frac{a-b}{b}\mathbbm{1}(K) \frac{1-\Psi^2}{1-\Psi} \sum_{i=1}^{-\lceil 1/\log \Psi \rceil} \frac{1}{i+1} \leq C\frac{a-b}{b} \mathbbm{1}(K) \log \frac{2\psi^{-1}(k)}{k},
	\]
	where we used the definition of $\Psi=\Psi(k)$ in the final inequality. Since we also have by optional stopping that
	\begin{equation}
	\mathbf{P}_{\mathcal{T}_b} \left((a,b)\text{ is the first permadrop})\right) = \frac{a-b}{b} \mathbbm{1}(K)
	\label{eq:optional_stopping_equality},
	\end{equation}
	we can take expectations over $\mathcal{F}_{\mathcal{T}_b}$ conditional on $\mathcal{F}_{\mathcal{T}_1}$ to deduce that
	\begin{multline}
\mathbf{E}_{{\mathcal{T}_1}} \left[\mathbbm{1}((a,b)\text{ is the first large permadrop})R^{\prime\prime}_k \right] \\ \leq C \log \left[ \frac{2\psi^{-1}(k)}{k} \right] \cdot \mathbf{P}_{{\mathcal{T}_1}} \left((a,b)\text{ is the first large permadrop}\right).
	\end{multline}
	We stress that this holds for \emph{every} pair of real numbers $a>b$ in $[e^{-k-1},e^{-k}]$, but that both sides will be equal to zero for all but countably many such pairs. Summing over the countable set of pairs giving a non-zero contribution yields that
	\begin{equation*} 
	\mathbf{E}_{\mathcal{T}_1}[R]\leq 2\mathbf{P}_{{\mathcal{T}_1}}(R\geq 1) + \mathbf{E}_{\mathcal{T}_1}[R'']\leq \left(2+C\log\frac{2\psi^{-1}(k)}{k}\right)\mathbf{P}_{\mathcal{T}_1} (R\geq 1)
\end{equation*}
as claimed.

It remains to prove \eqref{eq:deterministic_inequality}.
Given a set $S\in\mathscr{L}$, we say $S$ is \textbf{slack} if there exists an element $(x,y)\in S$ such that $y/x < \Psi^2$ and \textbf{taut} otherwise. Observe that if $S\in \mathscr{L}$ is slack and $(x,y)\in S$ satisfies $y/x < \Psi^2$ then the set $S' = S \cup \{(x,\Psi x),(\Psi x, y)\} \setminus \{(x,y)\}$ also belongs to $\mathscr{L}$ and satisfies $F(S)\leq F(S')$. Indeed, the latter inequality follows from the pointwise inequality
	\begin{multline*}
		\mathbbm{1}((\mathcal{Z}_t)_{t>\mathcal{T}_b} \text{ hits }y_{i}\text{ before } a ) \Bigl(1-\frac{y_i}{x_i}\Bigr)\\
		\leq\mathbbm{1}((\mathcal{Z}_t)_{t>\mathcal{T}_b} \text{ hits }\Psi x_i\text{ before } a ) \Bigl(1-\frac{{\Psi x_i}}{x_i}\Bigr)+\mathbbm{1}((\mathcal{Z}_t)_{t>\mathcal{T}_b} \text{ hits }y_{i}\text{ before } a ) \Bigl(1-\frac{y_i}{\Psi x_i}\Bigr).
	\end{multline*}
	To verify this inequality, note that if the indicator on the left is one, then so are both indicators on the right, and when all three indicators are equal to one, the inequality is equivalent to the elementary inequality 
	\[
	1-\frac{\Psi x_i}{x_i} + 1 - \frac{y_i}{\Psi x_i} -\left(1-\frac{y_i}{x_i}\right)= \frac{\Psi(1-\Psi) x_i - (1- \Psi) y_i}{\Psi x_i} \geq  \frac{\Psi(1-\Psi) x_i - (1- \Psi) \Psi^2 x_i}{\Psi x_i} = (1-\Psi)^2\geq 0
	\]
	which holds since $y_i< \Psi^2 x_i$.
	Given a slack set $S \in \mathscr{L}$, we can therefore iterate this operation until we obtain a taut set $S^\bullet$ with $F(S)\leq F(S^\bullet)$; this iterative process must terminate after finitely many steps since $|S'|=|S|+1$ and every set in $\mathscr{L}$ contains at most $\lceil -1/\log \Psi \rceil$ pairs of points. Enumerate the pairs of points of $S^\bullet$ in decreasing order as $(x_1,y_1)$, $(x_2,y_2)$, \ldots, $(x_\ell,y_\ell)$. Since every pair $(x,y) \in S^\bullet$ satisfies $y/x\leq \Psi^2$ and every two distinct pairs of points in $S^\bullet$ span disjoint open intervals of $[e^{-k-1},b]$ we must have that $y_i \leq \Psi^i b$ for every $1 \leq i \leq \ell$ and hence that $\ell \leq \lceil -1/\log \Psi\rceil$ as previously mentioned.
	It follows that
	\begin{multline*}
F(S)\leq F(S^{\sbt})=\sum_{i=1}^\ell \mathbbm{1}((\mathcal{Z}_t)_{t>\mathcal{T}_b} \text{ hits }y_i\text{ before } a ) \big(1-\frac{y_i}{x_i}\big)
\\\leq \sum_{i=1}^{\ceil{-1/\log\Psi}}\mathbbm{1}((\mathcal{Z}_t)_{t>\mathcal{T}_b} \text{ hits }\Psi^i b\text{ before } a ) (1-\Psi^2) ,
\end{multline*}
as claimed, where we used that $S^\bullet$ is taut in the second inequality.
\end{proof}

With these bounds in hand, we can now prove Theorem \ref{thm:kcuts}. 
The proof will apply the Borel-Cantelli counterpart \cite{MR587211} which is an extension of the second Borel-Cantelli lemma to dependent events.
\begin{lemma}[Borel-Cantelli Counterpart] \label{lem:BCC}
	If $(E_n)_{n\geq0}$ is an increasing sequence of events satisfying the divergence condition
	$\sum_{n\geq 1} \pr(E_n\mid E_{n-1}^c)=\infty$, then $\pr(\bigcup_{n\geq 1} E_n)=1$.
\end{lemma}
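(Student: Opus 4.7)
The plan is to iteratively decompose $\pr(E_n^c)$ using that $(E_n)$ is increasing, then use the divergence hypothesis together with the elementary inequality $1-x \leq e^{-x}$ to show $\pr(E_n^c) \to 0$, and conclude by continuity of measure.

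First I would observe that since $E_n$ is increasing, $E_n^c \subseteq E_{n-1}^c$, so for any $n \geq 1$ with $\pr(E_{n-1}^c) > 0$,
\[
\pr(E_n^c) = \pr(E_n^c \mid E_{n-1}^c)\,\pr(E_{n-1}^c) = \bigl(1 - \pr(E_n \mid E_{n-1}^c)\bigr)\,\pr(E_{n-1}^c).
\]
If $\pr(E_N^c) = 0$ for some $N$, then using $E_N \subseteq \bigcup_{n \geq 1} E_n$ (which follows from the increasing property combined with $E_0 \subseteq E_1$) gives $\pr\bigl(\bigcup_{n \geq 1} E_n\bigr) \geq \pr(E_N) = 1$ and the conclusion is immediate, so I may assume $\pr(E_n^c) > 0$ for every $n$. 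Iterating the identity above then yields
\[
\pr(E_n^c) = \pr(E_0^c) \prod_{k=1}^n \bigl(1 - \pr(E_k \mid E_{k-1}^c)\bigr).
\]

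Next I would use $1 - x \leq e^{-x}$ to deduce
\[
\pr(E_n^c) \leq \pr(E_0^c) \exp\biggl(-\sum_{k=1}^n \pr(E_k \mid E_{k-1}^c)\biggr),
\]
which tends to $0$ as $n \to \infty$ by the divergence hypothesis. Since $(E_n^c)_{n\geq 0}$ is a decreasing sequence of events, continuity of measure gives $\pr\bigl(\bigcap_{n\geq 0} E_n^c\bigr) = \lim_n \pr(E_n^c) = 0$, and hence $\pr\bigl(\bigcup_{n \geq 0} E_n\bigr) = 1$. The increasing property ensures that the union over $n \geq 1$ coincides with that over $n \geq 0$, completing the argument.

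The proof is essentially routine; no substantive obstacle is anticipated. The only minor subtlety is handling the case where a conditioning event has probability zero, which is disposed of at the start so that all conditional probabilities appearing later in the argument are well-defined.
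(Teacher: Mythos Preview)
Your proof is correct and is the standard argument for this elementary lemma. The paper itself does not prove \cref{lem:BCC} but merely cites it as a known result from the literature, so there is no ``paper's proof'' to compare against; your write-up would serve perfectly well as a self-contained justification.
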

Setting $E_n=\cup_{1\leq i\leq n} A_i$ for $n\geq 1$ where $(A_i)_{i\geq 1}$ is an arbitrary sequence of events,
the Borel-Cantelli counterpart implies in particular that
\begin{equation}
\label{eq:counterpart}
\text{ if } \qquad \sum_{n\geq 1} \pr\big(A_n\mid (\cup_{i=1}^{n-1} A_i)^c\big)=\infty \qquad \text{ then } \qquad \pr\Biggl(\,\bigcup_{n\geq 1} A_n \Biggr)=1.
\end{equation}

\begin{proof} [Proof of Theorem  \ref{thm:kcuts}]
	We assume that $\tau_\dagger=\infty$ with positive probability, the claim holding vacuously otherwise.
	 We continue to use $\phi$, $\psi$, $a$, and $k_0$ as defined above and let $\mathscr{G}$ be the event that $Z_n \leq e^{-\phi(n)}$ for all sufficiently large $n$. Since each permadrop gives rise to a distinct cut time, it suffices to prove that $\sum_{k} R_k = \infty$ almost surely on the event that $Z$ is not killed.
	For each $k\geq 1$, let $A_k=\{R_{a(k)}\geq 1\}$ be the event that there is at least one large permadrop on the $a(k)$th scale, let $B_k$ be the event that there exists $n$ such that $e^{-a(k)-1} < Z_n^* \leq e^{-a(k)-3/4}$, and let $C_k$ be the event that there exists $n$ such that $\sqrt{e^{-a(k-1)-1}e^{-a(k)}} \leq Z_n^* < a^{-a(k-1)-1}$
	. 
	If $(B_k \cap C_k)^c$ holds for infinitely many $k$ then $Z$ is either killed or satisfies $Z_{n+1}^* \leq e^{-1/4} Z_n^*$ for infinitely many $n$, in which case the claim follows from the proof of Proposition~\ref{prop:expDeccase}. As such, it suffices to prove that 
	\[
\mathbf{P}\biggl( \bigcup_{k=k_1}^\infty A_k \mid \mathscr{G} \text{ holds, } \tau_\dagger = \infty, \text{ and } B_k \cap C_k \text{ holds for all sufficiently large $k$}\biggr) =1 \text{ for every $k_1 \geq k_0$}
	\]
    whenever the event being conditioned on has positive probability.
	We will assume for contradiction that there exists $k_1 \geq k_0$ such that this does not hold, and fix such a $k_1$ for the remainder of the proof. 
	
	For each $k\geq k_0$, let $G_k$ be the event that $Z$ is $\psi$-good on scale $a(k)$. It follows from Propositions \ref{prop:k_lower} and \ref{prop:k} that there exists a universal constant $c>0$ such that
	\begin{equation} \label{eq:propApplication}
\mathbf{P}_{\mathcal{T}_{1,a(k)}} (A_k) \geq  \frac{c \cdot \mathbf{P}_{\mathcal{T}_{1,a(k)}}(G_k \cap B_k)}{1+ \log \psi^{-1}(a(k)) }
	\end{equation}
	for every $k\geq 1$, where we used that $\psi(x)\leq \sqrt{x}$ to bound $\log (\psi^{-1}(a(k))/a(k)) \geq \frac{1}{2}\log \psi^{-1}(a(k))$. For each $k\geq k_0$, let $F_k$ be the event that 
	for each $\ell < k$ and $0 \leq i \leq N_{a(\ell)-1}$ such that the drop $(d_{i,a(\ell)},d_{i+1,a(\ell)})$ is \emph{not} a permadrop, the process $\mathcal{Z}$ hits $d_{i,a(\ell)}$ at a time between $\mathcal{T}_{i+1,a(\ell)}$ and $\mathcal{T}_{1,a(k)}$.
The event $F_k$ is constructed precisely to force decorrelation between $A_k$ and $(\bigcup_{i=k_1}^{k-1} A_i)^c$. Indeed, the intersection $F_k \cap C_k \setminus \cup_{i=k_1}^{k-1} A_i$ is measurable with respect to $\mathcal{F}_{\mathcal{T}_{1,a(k)}}$ 
and we can apply \eqref{eq:propApplication} to deduce that
\begin{align}
\mathbf{P}(A_k \cap F_k \cap C_k \setminus \cup_{i=k_1}^{k-1} A_i)
&= \mathbf{E}\left[\mathbbm{1}(F_k \cap C_k \setminus \cup_{i=k_1}^{k-1} A_i) \mathbf{P}_{\mathcal{T}_{1,a(k)}} (A_k) \right] \nonumber
\\&\geq \frac{c}{1+ \log \psi^{-1}(a(k)) } \mathbf{E}\left[\mathbbm{1}(F_k \cap C_k \setminus \cup_{i=k_1}^{k-1} A_i) \mathbf{P}_{\mathcal{T}_{1,a(k)}} (G_k \cap B_k) \right] \nonumber\\&= \frac{c}{1+ \log \psi^{-1}(a(k)) } \mathbf{P}(F_k \cap C_k \cap G_k \cap B_k \setminus \cup_{i=k_1}^{k-1} A_i)
\end{align}
for every $k\geq k_0$. On the other hand, it follows by optional stopping that
\[
\mathbf{P}(F_k^c \cap C_k \cap G_k \cap B_k \setminus \cup_{i=k_1}^k A_i) \leq \mathbf{P}(F_k^c \cap C_k) \leq \sqrt{e^{-a(k)+a(k-1)+1}}
\]
and hence that
\begin{align}
\mathbf{P}(A_k \setminus \cup_{i=k_1}^{k-1} A_i)&\geq \frac{c}{1+ \log \psi^{-1}(a(k)) } \left(\mathbf{P}(C_k \cap G_k \cap B_k \setminus \cup_{i=k_1}^{k-1} A_i)-\sqrt{e^{-a(k)+a(k-1)+1}}\right)\vee 0.
\end{align}
We deduce by linearity of expectation that
\begin{equation}
\sum_{k=k_1}^\ell \mathbf{P}(A_k \setminus \cup_{i=k_1}^{k-1} A_i) \geq c \cdot \mathbf{E}\left[ \sum_{k=k_1}^{\ell} \frac{(\mathbbm{1}(C_k \cap G_k \cap B_k \setminus \cup_{i=k_1}^{k-1} A_i)-\sqrt{e^{-a(k)+a(k-1)+1}})\vee 0}{1+ \log \psi^{-1}(a(k)) }  \right].
\label{eq:nearly_there}
\end{equation}
On the event that $\mathscr{G}\setminus \cup_{i=k_1}^\infty A_i$ holds and $B_k \cap C_k$ holds for all sufficiently large $k$ (which has positive probability by assumption), we have by choice of $\psi$ in Lemma~\ref{lem:decaydensity} that $\liminf_{k\to\infty}\frac{1}{k}\sum_{\ell =k_1}^k \mathbbm{1}(C_k \cap G_k \cap B_k \setminus \cup_{i=k_1}^\infty A_i) >0$ and hence by Lemma \ref{lem:subseq} that there exists an almost surely positive $\eta>0$ and almost surely finite $k_2$ such that
\[
\sum_{k=k_1}^{\ell} \frac{(\mathbbm{1}(C_k \cap G_k \cap B_k \setminus \cup_{i=k_1}^{k-1} A_i)}{1+ \log \psi^{-1}(a(k)) } \geq \eta \sum_{k=k_1}^{\ell} \frac{1}{1+ \log \psi^{-1}(a(k)) }
\]
for every $\ell\geq k_2$. Since $a(k)-a(k-1)\to\infty$ as $k\to\infty$, the other term in \eqref{eq:nearly_there} is of lower order than this and we deduce that
\[\sum_{k=k_1}^\infty \mathbf{P}(A_k \setminus \cup_{i=k_1}^{k-1} A_i)
=\infty.
\]
It follows from the Borel-Cantelli counterpart that $\mathbf{P}(\cup_{k=k_1}^\infty A_k)=1$, contradicting the definition of $k_1$. \qedhere
\end{proof}

\subsection{Completing the proof of Theorem \ref{thm:mainPolyDecay}}

In this section we deduce Theorem \ref{thm:mainPolyDecay} from Theorem \ref{thm:kcuts}. Note that the proof establishes a slightly stronger claim giving the almost sure existence of infinitely many cut times on the event that hitting probabilities decay quickly, without needing to assume that the latter occurs almost surely.
\begin{proof}[Proof of Theorem \ref{thm:mainPolyDecay}]
Let $M=(\Omega,P,\dagger)$ be a transient Markov chain with killing, let $X=(X_n)_{n\geq 0}$ be a trajectory of  $M$, and let $\phi:[0,\infty)\rightarrow[0,\infty)$ be a strictly increasing function  such that
	\begin{equation}\label{eq:general_condition}
	\sum_{n=1}^\infty \frac{1}{1 \vee \log\left(\phi^{-1}(n)\right)}=\infty.
	\end{equation} 
	It suffices by Lemma \ref{lem:integral} to prove that if the event
	$\mathscr{G}=\{ \limsup_{n\to\infty} e^{\phi(n)} \bbH(X_n,X_m) < \infty$ for every $m\geq 0$ such that $X_m \neq \dagger\}$
	has positive probability, then $X$ is either killed or has infinitely many cut times almost surely conditional on $\mathscr{G}$. Compared to this statement, Theorem \ref{thm:kcuts} has three additional hypotheses: that $M$ is locally finite, that $M$ is irreducible, and that $P(x,x)=0$ for every $x\neq \dagger$. We will show that these assumptions can each be removed via a simple reduction argument.

\medskip

\noindent \textbf{Removing the condition that $P(x,x)=0$ for every $x\neq \dagger$:} First suppose that $M$ is irreducible and locally finite but does not necessarily satisfy $P(x,x)=0$ for every $x\neq \dagger$. Since $M$ is irreducible and transient, $P(x,x) \neq 1$ for every $x \neq \dagger$. Consider the Markov chain $M'=(\Omega,P',\dagger)$ where $P'$ is the transition matrix defined by $P'(x,x)=0$ for every $x\neq \dagger$ and
\[
P'(x,y) = \frac{P(x,y)}{1-P(x,x)} \qquad \text{ for every $x \in \Omega \setminus \{\dagger\}$ and $y\in \Omega \setminus \{x\}$.}
\]
 We can couple trajectories $X$ and $Y$ of $M$ and $M'$ so that $X$ visits the same states as $Y$ in the same order but possibly includes additional steps where it stays at the same non-graveyard vertex for more than one consecutive step. In particular, if $Y$ has infinitely many cut times then $X$ does also. Since the hitting probabilities for $M$ and $M'$ are equal and $Y_n \in \{X_m : m \geq n\}$ for every $n\geq 1$, if the event $\mathscr{G}$ holds for $X$ then the analogous event holds for $Y$ also, and the claim follows from Theorem~\ref{thm:kcuts}.

\medskip

\noindent \textbf{Removing the condition that $M$ is irreducible:} Now suppose that $M$ is locally finite but not necessarily irreducible, and does not necessarily satisfy $P(x,x)=0$ for every $x\neq \dagger$. 
 For each communicating class $C\neq \{\dagger\}$ of $M$ we can define a Markov chain with killing $M_C=(C \cup \{\dagger\},P_C,\dagger)$, where $P_C(u,v) = P(u,v)$ for each $u,v \in C$ and $P_C(u,\dagger) = \sum_{v\notin C} P(u,v)$ for each $u\in C$.
 When a trajectory $X$ of the original Markov chain $M$ enters a communicating class $C \neq \{\dagger\}$, it can be coupled with a trajectory of $M_C$ up to the first time that it leaves $C$, at which time the coupled trajectory of $M_C$ is killed. 
 Observe that a trajectory of $M$ must either pass though infinitely many communicating classes or enter some final communicating class $C_f$. If $C_f = \{\dagger\}$, the trajectory is killed and there is nothing to prove.
 Each time the trajectory $(X_n)$ enters a new communicating class $C \neq \{\dagger\}$, the coupling with a trajectory of $M_C$ together with the previous part of the proof implies that, conditional on $\mathscr{G}$,  the walk will almost surely either stay in $C$ forever and have infinitely many cut times or leave $C$.
 Thus, if $\mathscr{G}$ holds and $X$ eventually stays in a single communicating class, then it is either killed or has infinitely many cut times almost surely. On the other hand, if 
 $X$ visits infinitely many communicating classes then the set of times at which it enters a new communicating class constitute an infinite set of cut times, so that the claim also holds in this case.
 
\medskip

 \noindent
 \textbf{Removing the condition that $M$ is locally finite:} We now let $M$ be arbitrary; it remains only to remove the restriction that it is locally finite.
   We assume that trajectory $X$ starts at a non-recurrent state $X_0\in\Omega$, the claim holding vacuously otherwise. We merge all the recurrent communicating classes of $M$ into the single state $\dagger$ to give a Markov chain with killing $M^\prime=(\Omega^\prime,P^\prime,\dagger)$, noting that we can couple trajectories of $M$ and $M^\prime$ such that they are identical up to the first time the two trajectories enter a recurrent communicating class (which corresponds to be killed in $M'$). We enumerate the states in $\Omega^\prime\setminus\{\dagger\}$ as $(y_i)_{i\geq 1}$ and for each state $y\in \Omega$ define $y^\rightarrow=\{z\in\Omega:P(y,z)>0\}$. Fix $\varepsilon>0$. Since every state in $\Omega'\setminus \{\dagger\}$ is transient,  we can select for each $i\geq 0$ a subset $L_i$ of the states in $y_i^\rightarrow$ such that $y_i^\rightarrow\setminus L_i$ is finite and the trajectory $(X_n)$ on $M^\prime$ starting at $X_0$ satisfies
 \[
 \pr(\exists j\in\N \text{ such that } X_j=y_i \text{ and } X_{j+1}\in L_i)<\varepsilon 2^{-i}.
 \]
It follows by a union bound that the event $\mathscr{L}=\{\exists i,j\in\N$ such that $X_j=y_i$ and $X_{j+1}\in L_i\}$ that the trajectory ever makes a transition of this type has probability at most $\varepsilon$.
We construct a new Markov chain with killing $M^{\prime\prime}=(\Omega^\prime,P^{\prime\prime},\dagger)$ where, for each $i\geq 1$, transitions from $y_i$ to $L_i$ are redirected to the graveyard state. That is, for each $i\geq 1$, we set $P^{\prime\prime}(y_i,v)=0$ for every $v\in L_i$, set $P''(y_i,v)=P'(y_i,v)$ for each $v \notin L_i \cup \{\dagger\}$, and set $P^{\prime\prime}(y_i,\dagger)=P^{\prime}(y_i,\dagger)+\sum_{v\in L_i} P^{\prime}(y_i,v)$. This construction ensures that $M''$ is locally finite.
 We can couple trajectories $X$ on $M^\prime$ and $Y$ on $M^{\prime\prime}$ to be identical up until the time that $X$ makes a transition from $y_i$ to $L_i$ for some $i\geq 1$, after which $Y$ is killed.
  It follows from this coupling that
   \begin{equation}\label{eq:greendecrease}
	\mathbb{H}^M(x,y)\geq\mathbb{H}^{M^{\prime\prime}}(x,y) \qquad \text{ for every $x,y\in \Omega' \setminus \{\dagger\}$},
\end{equation}
and hence under this coupling that $\mathbb{H}^{M^{\prime\prime}}(Y_n,Y_m)\leq\mathbb{H}^{M^{\prime\prime}}(X_n,X_m)$ whenever $n\geq m$ is such that $Y_n \neq \dagger$.
Since $M''$ is locally finite it follows that, under this coupling, $Y$ is either killed or has infinitely many cut times on the event  $\mathscr{G}=\{ \limsup_{n\to\infty} e^{\phi(n)} \bbH(X_n,X_m) < \infty$ for every $m\geq 0$ such that $X_m \neq \dagger\}$. The claim follows since $X$ and $Y$ coincide forever with probability at least $1-\varepsilon$ and $\varepsilon>0$ was arbitrary.
\end{proof}

\section{Superdiffusive walks} \label{sec:SSD}

In this section we prove \cref{thm:speedTheorem}, which states that random walks on networks satisfying a mild superdiffusivity condition have infinitely many cut times almost surely. It will once again be convenient to work within a more general framework that allows for random walks to be killed.
 We define a \textbf{network with killing} to be a tuple $N=(V,E,c,K)$ where $(V,E,c)$ is a network and $K:V\to [0,\infty)$ is a \textbf{killing function}. Given a network with killing $N=(V,E,c,K)$, the random walk on $N$ is the Markov chain with state space $V \cup \{\dagger\}$ and with transition matrix defined by
 \[
P(u,v) = \frac{c(u,v)}{c(u)+K(u)} \quad \text{for $u,v\in V$} \qquad P(u,\dagger) = \frac{K(u)}{c(u)+K(u)} \quad \text{for $u\in V$}, \quad \text{ and } \quad P(\dagger,\dagger)=1,
 \]
 where $c(u)$ denotes the total conductance of all oriented edges emanating from $u$ and $c(u,v)$ denotes the total conductance of all oriented edges with tail $u$ and head $v$.
We will follow the standard practice of writing $p_n(u,v)=P^n(u,v)$ for transition probabilities.

\medskip

The starting point of our analysis is the following well-known theorem of Varopoulos and Carne \cite{MR822826,MR837740} (see also \cite{MR3616205}). While usually stated without allowing for killing, the same proof applies equally well to networks with killing; the important thing is that $P$ satisfies the self-adjointness relation $(c(u)+K(u)) P(u,v) = (c(v)+K(v))P(v,u)$ for every $u,v\in V$ and that the restriction of $P$ to $V$ is substochastic.

\begin{theorem}[Varopoulos-Carne Inequality]
    The transition probabilities $p_n(x,y)$ of a simple random walk on a network with killing $N=(V,E,c,K)$ satisfy
    \begin{equation} \label{eq:VC}
        p_n(x,y)\leq \sqrt{\frac{c(y)+K(y)}{c(x)+K(x)}} \exp\left[-\frac{d(x,y)^2}{2n}\right] \rho^n.
    \end{equation}
for every $x,y\in V$ and $n\geq 1$,    where $\rho$ is the spectral radius of the restriction of $P$ to $V$.
\end{theorem}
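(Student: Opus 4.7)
The plan is to adapt the classical Chebyshev polynomial proof of the Varopoulos--Carne inequality, using the self-adjointness relation $(c(u)+K(u)) P(u,v) = (c(v)+K(v)) P(v,u)$ highlighted in the paragraph preceding the statement. Set $\pi(u) = c(u)+K(u)$ and work on the Hilbert space $\ell^2(V,\pi)$, on which the restriction of $P$ to $V$ becomes a self-adjoint bounded operator whose operator norm equals, by definition, the spectral radius $\rho$ of $P\big|_V$. We can then write
\[
p_n(x,y) = \frac{(P^n \mathbbm{1}_y, \mathbbm{1}_x)_\pi}{\pi(x)},
\]
and the problem is reduced to a bound on this inner product.

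Next, I would apply the Chebyshev identity
\[
\lambda^n = \sum_{k=0}^{n} \bbP(|S_n|=k)\, T_k(\lambda)
\]
valid for $\lambda \in [-1,1]$, where $T_k$ is the Chebyshev polynomial of the first kind and $S_n$ is simple random walk on $\ints$ started at $0$. Apply this to the rescaled operator $P/\rho$, whose spectrum lies in $[-1,1]$ by self-adjointness, via the functional calculus; since each $T_k$ is a polynomial, this is just polynomial evaluation in the operator $P/\rho$, yielding
\[
P^n = \rho^n \sum_{k=0}^{n} \bbP(|S_n|=k)\, T_k(P/\rho).
\]
The crucial locality property is that $T_k$ has degree $k$ and $P^j(x,y)=0$ whenever $j < d(x,y)$ (since any walk from $x$ to $y$ in the killed chain must traverse at least $d(x,y)$ edges of $(V,E)$), so $T_k(P/\rho)(x,y) = 0$ for $k < d(x,y)$.

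To bound the remaining matrix elements, use Cauchy--Schwarz in $\ell^2(V,\pi)$ together with $\|T_k(P/\rho)\|_{\ell^2(V,\pi)} \leq \sup_{\lambda \in [-1,1]} |T_k(\lambda)| = 1$ (which is where self-adjointness of $P/\rho$ and the spectral theorem are essential) to obtain
\[
|T_k(P/\rho)(x,y)| \leq \frac{1}{\pi(x)}\|\mathbbm{1}_y\|_\pi \|\mathbbm{1}_x\|_\pi = \sqrt{\pi(y)/\pi(x)}.
\]
Combining with the Chebyshev expansion gives
\[
p_n(x,y) \leq \rho^n \sqrt{\pi(y)/\pi(x)}\, \bbP\bigl(|S_n| \geq d(x,y)\bigr),
\]
and a standard Hoeffding/Azuma estimate for simple random walk on $\ints$ provides the Gaussian tail bound $\bbP(|S_n|\geq d) \leq \exp(-d^2/(2n))$ (or with a factor of $2$ that one can absorb by symmetry, depending on the normalization of $T_0$).

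The main subtlety is verifying that the classical argument goes through cleanly in the presence of the killing function $K$. The only real adjustment is in the definition of $\pi$: the usual stationary measure $c(u)$ is replaced by $c(u)+K(u)$, which is exactly what makes $P\big|_V$ self-adjoint on $\ell^2(V,\pi)$ rather than merely formally symmetric, and which leaves $P\big|_V$ a contraction (in fact strictly substochastic at killed vertices) so that its spectral radius $\rho$ is well-defined and the operator bound $\|T_k(P/\rho)\|\leq 1$ is valid. Aside from this bookkeeping, every step of the proof --- the self-adjointness, the Chebyshev expansion, the locality of $T_k(P)$ in the graph metric, and the Hoeffding tail bound --- is identical to the unkilled case.
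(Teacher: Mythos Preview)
The paper does not give its own proof of this theorem: it is quoted as a known result of Varopoulos and Carne, with the remark that ``the same proof applies equally well to networks with killing; the important thing is that $P$ satisfies the self-adjointness relation $(c(u)+K(u)) P(u,v) = (c(v)+K(v))P(v,u)$ for every $u,v\in V$ and that the restriction of $P$ to $V$ is substochastic.'' Your proposal is exactly the classical Chebyshev-polynomial argument the paper is alluding to, and you have correctly identified the only modification needed in the killed setting, namely replacing the measure $c(\cdot)$ by $\pi(\cdot)=c(\cdot)+K(\cdot)$ so that $P|_V$ is self-adjoint on $\ell^2(V,\pi)$.

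One small quibble: the Hoeffding bound for simple random walk gives $\bbP(|S_n|\geq d)\leq 2\exp(-d^2/(2n))$, and the factor of $2$ cannot really be ``absorbed by symmetry'' as your parenthetical suggests; the standard statement of Varopoulos--Carne carries this factor (and indeed the version stated in the paper is missing it, though this is harmless for every application they make of it).
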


We will bound the spectral radius term trivially by $1$ in all our applications of this inequality.

\medskip

While we would naively like to use Varoupoulos-Carne together with our superdiffusivity hypothesis to obtain bounds on the decay of the Green's function along the random walk, and conclude by applying \cref{thm:mainPolyDecay}, unfortunately, this
 does not seem to be possible in general. Indeed, while it is possible to obtain bounds on the small-time and medium-time transition probabilities of the walk using the Varopoulos-Carne inequality, this inequality gives us no control of the large-time contributions to the Green's function.
In our efforts to circumvent this issue, we will establish some rather general conditions under which we can compare the decay of $\mathbb{G}(X_n,X_0)$ and $p_n(X_n,X_0)$ that may be of independent interest.

\subsection{Comparing $p_n(X_n,X_0)$ and $\mathbb{G}(X_n,X_0)$ assuming superpolynomial decay}

The first step of our proof is to give conditions under which the a.s.\ rates of decay of $p_n(X_n,X_0)$ and $\mathbb{G}(X_n,X_0)$ can be compared. Given a connected network with killing $N$, we say that $N$ satisfies the \textbf{superpolynomial decay condition} if
\begin{equation}
\label{eq:SPD}
\tag{SPD}
\lim_{n\to\infty} \frac{\log \sup_{u\in V} p_n(u,v)}{\log n} =0 \qquad \text{ for some (and hence every) $v\in V$.}
\end{equation}

\begin{proposition}
\label{prop:p_and_g}
Let $N$ be a network with killing and let $X=(X_n)_{n\geq 0}$ be a random walk started at $o$. If the transition probabilities to $o$ satisfy the superpolynomial decay condition \eqref{eq:SPD} then
\[
\lim_{n\to\infty}\frac{\log p_n(X_n,X_0)}{\log \mathbb{G}(X_n,X_0)} = 1
\]
almost surely, with the convention that this ratio is equal to $1$ when $X_n=\dagger$.
\end{proposition}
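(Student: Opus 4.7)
The plan is to establish both $\log p_n(X_n,X_0)/\log \mathbb{G}(X_n,X_0) \geq 1$ and $\log p_n(X_n,X_0)/\log \mathbb{G}(X_n,X_0) \leq 1 + o(1)$ almost surely, which combined give the desired limit $1$. First I would observe that \eqref{eq:SPD} implies transience of the walk (via $\sum_n p_n(v,v) < \infty$) and hence that $\mathbb{G}(X_n,X_0) = \mathbb{P}_{X_n}[\tau_{X_0} < \infty]\,\mathbb{G}(X_0,X_0) \to 0$ almost surely, so for $n$ large enough both $\log p_n$ and $\log \mathbb{G}$ are negative with $|\log \mathbb{G}(X_n,X_0)| \to \infty$. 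The pointwise inequality $p_n(X_n,X_0) \leq \mathbb{G}(X_n,X_0)$ then yields $|\log p_n| \geq |\log \mathbb{G}|$ and hence the lower bound on the ratio.

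For the reverse direction, it suffices to show $\log\bigl(\mathbb{G}(X_n,X_0)/p_n(X_n,X_0)\bigr) = o\bigl(|\log p_n(X_n,X_0)|\bigr)$ almost surely. Since \eqref{eq:SPD} forces $|\log p_n(X_n,X_0)| \geq |\log \sup_v p_n(v,X_0)|$ to grow faster than $\log n$, it is enough to prove the cruder estimate $\mathbb{G}(X_n,X_0) \leq n^{O(1)}\, p_n(X_n,X_0)$ almost surely for all large $n$: after taking logs this contributes only an $O(\log n)$ additive correction, which is negligible compared to $|\log p_n|$ and hence to $|\log \mathbb{G}|$.

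To establish this polynomial comparison I would split $\mathbb{G}(X_n,X_0) = \sum_{m=0}^\infty p_m(X_n,X_0)$ into a head $m < n$ and a tail $m \geq n$. For the tail, Chapman--Kolmogorov gives $p_{n+k}(X_n,X_0) \leq \sup_v p_k(v,X_0)$ for every $k\geq 0$; by \eqref{eq:SPD} this upper bound is super-polynomially small in $k$, so the tail sum is dominated by $o(n^{-K})$ for every $K$. For the head, I would use the elementary Chapman--Kolmogorov lower bound $p_n(x,y) \geq p_k(x,x)\,p_{n-k}(x,y)$ to conclude that $p_{n-k}(X_n,X_0) \leq p_n(X_n,X_0)/p_k(X_n,X_n)$, and then bound $\sum_{k=1}^n 1/p_k(X_n,X_n)$ by $n^{O(1)}$ using reversibility and the fact that \eqref{eq:SPD} applies uniformly across starting points.

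The main obstacle is the control of $\sum_k 1/p_k(X_n,X_n)$ along the walk: a priori the walk could visit vertices at which the short-time return probabilities are anomalously small, inflating this sum. Ruling this out almost surely requires either extracting a robust uniform version of \eqref{eq:SPD} from its pointwise statement, or a Borel--Cantelli argument showing that the walk spends only negligible time at such bad vertices. This is the technical crux; the decomposition and the logarithmic reduction to a polynomial comparison are relatively routine.
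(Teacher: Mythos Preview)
Your reduction and the lower bound are fine, but the ``head'' step has a genuine gap. You need $\sum_{k=1}^{n} 1/p_k(X_n,X_n)=n^{O(1)}$, which amounts to a polynomial \emph{lower} bound on the return probabilities $p_k(X_n,X_n)$ at the moving vertex $X_n$. The hypothesis \eqref{eq:SPD} is an \emph{upper} bound on $\sup_u p_k(u,o)$ for the single fixed vertex $o$; it gives no information whatsoever about return probabilities at other vertices, and on networks with unbounded degrees or conductances the quantity $p_2(v,v)$ can be arbitrarily small. Reversibility does not help either, since it relates $p_k(u,v)$ to $p_k(v,u)$ rather than producing a lower bound on $p_k(v,v)$. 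Neither of your proposed fixes has an evident implementation from the stated hypotheses. There is also a slip in the tail: your Chapman--Kolmogorov bound $p_{n+k}(X_n,X_0)\le \sup_v p_k(v,X_0)$ only shows that the tail is bounded by the \emph{constant} $\sum_{k\ge 0}\sup_v p_k(v,X_0)$; the obvious repair $p_m(X_n,X_0)\le \sup_v p_m(v,X_0)$ for $m\ge n$ does give $o(n^{-K})$, but that still does not imply the tail is $\le n^{O(1)}p_n(X_n,X_0)$ when $p_n(X_n,X_0)$ is itself much smaller than any inverse polynomial.

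The paper avoids all of this with a single identity. Writing $p_m(X_n,X_0)/p_n(X_n,X_0)=p_m(X_0,X_n)/p_n(X_0,X_n)$ on $\{X_n\ne\dagger\}$ by reversibility and taking expectations,
\[
\mathbb{E}\!\left[\frac{p_m(X_n,X_0)}{p_n(X_n,X_0)}\,\mathbbm{1}(X_n\ne\dagger)\right]
=\sum_{x} p_n(X_0,x)\,\frac{p_m(X_0,x)}{p_n(X_0,x)}
\le 1,
\]
so the ratio has expectation at most $2$ uniformly in $m,n$. Markov's inequality plus Borel--Cantelli over the doubly indexed family then gives $p_m(X_n,X_0)\le \gamma (m+1)^2(n+1)^2\, p_n(X_n,X_0)$ for all $m,n$ almost surely, with a single random constant~$\gamma$: this controls every $p_m$ against $p_n$ directly, with no reference to $p_k(X_n,X_n)$. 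One then truncates not at $m=n$ but at $N:=\lceil p_n(X_n,X_0)^{-\varepsilon}\rceil$: the head contributes at most a polynomial in $N,n$ times $p_n$, hence $p_n^{1-O(\varepsilon)}$, while the tail $\sum_{m>N}\sup_v p_m(v,X_0)$ is $N^{-\omega(1)}=p_n^{\omega(1)}$ by \eqref{eq:SPD}. This yields $\mathbb{G}(X_n,X_0)\le p_n(X_n,X_0)^{1-o(1)}$, which is exactly what is needed.
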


This proposition is not really needed for \cref{thm:speedTheorem}, since the superpolynomial decay hypothesis \eqref{eq:SPD} would already suffice to deduce the claim from \cref{thm:mainPolyDecay}. It will, however, be used more seriously in the proof of \cref{thm:speedTheorem_density}.
For random walks on finitely generated groups with positive speed, which always satisfy \eqref{eq:SPD} by \cite[Corollary 6.32]{MR3616205},  \Cref{prop:p_and_g} implies that the Av\'ez entropy and exponential decay rate of the Green's function coincide, recovering a result of Benjamini and Peres \cite[Proposition 6.2]{MR1254826}. Similar results for groups that are \emph{not} finitely generated have been obtained in \cite{MR2408585}.

\medskip

\Cref{prop:p_and_g} will be deduced from the following elementary observation.

\begin{lemma}  \label{lem:ratios}
    The transition probabilities $p_n(x,y)$ of a simple random walk $(X_n)_{n\geq 0}$ on a network with killing $N=(V,E,c,K)$ satisfy
    \begin{equation} \label{eq:ratios}
        \E{\frac{p_m(X_n,X_0)}{p_n(X_n,X_0)}}\leq \mathbb{P}(X_m \neq \dagger) + \mathbb{P}(X_n = \dagger)\leq 2,
    \end{equation}
    for every $x\in V$ and $m,n\geq 0$, with the convention that the ratio is $1$ when $X_n=\dagger$.
\end{lemma}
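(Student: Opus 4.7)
The plan is to reduce the claim to a short reversibility calculation. The crucial fact is that, as noted just before the statement of the Varopoulos--Carne inequality, the transition kernel $P$ satisfies the self-adjointness relation $(c(u)+K(u))P(u,v) = (c(v)+K(v))P(v,u)$ for all $u,v\in V$; equivalently, the restriction of $P$ to $V$ is reversible with respect to the measure $\pi(u):=c(u)+K(u)$. Iterating detailed balance yields $\pi(u)p_n(u,v)=\pi(v)p_n(v,u)$ for all $u,v\in V$ and $n\geq 0$.

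Next I would split the expectation according to whether $X_n$ has been killed. Writing $x=X_0$ and using the stated convention that the ratio equals $1$ on $\{X_n=\dagger\}$,
\begin{equation*}
\mathbb{E}_x\!\left[\frac{p_m(X_n,X_0)}{p_n(X_n,X_0)}\right] = \mathbb{P}_x(X_n=\dagger) + \sum_{y\in V} p_n(x,y)\,\frac{p_m(y,x)}{p_n(y,x)}.
\end{equation*}
For the sum, one application of reversibility to $p_n(x,y)=(\pi(y)/\pi(x))p_n(y,x)$ cancels the $p_n$ in the denominator, leaving $\sum_{y\in V}(\pi(y)/\pi(x))p_m(y,x)$; a second application of reversibility to $p_m$ converts this into $\sum_{y\in V}p_m(x,y) = \mathbb{P}_x(X_m\neq \dagger)$. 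Combining the two contributions gives
\begin{equation*}
\mathbb{E}_x\!\left[\frac{p_m(X_n,X_0)}{p_n(X_n,X_0)}\right] = \mathbb{P}_x(X_n=\dagger) + \mathbb{P}_x(X_m\neq \dagger) \leq 2,
\end{equation*}
as required.

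There is essentially no obstacle here: the lemma is a one-line reversibility computation once one recognizes that detailed balance applies to the substochastic chain on $V$. The only minor point requiring care is bookkeeping the graveyard state correctly, i.e.\ noting that reversibility need only be invoked on $V\times V$ and that the event $\{X_n=\dagger\}$ contributes exactly $\mathbb{P}_x(X_n=\dagger)$ via the stated convention, which is precisely the first term on the right-hand side of the bound.
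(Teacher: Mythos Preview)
Your argument is correct and is essentially the same reversibility computation as the paper's proof. The paper applies detailed balance once to flip both arguments simultaneously in the ratio, writing $p_m(X_n,X_0)/p_n(X_n,X_0)=p_m(X_0,X_n)/p_n(X_0,X_n)$ and then cancelling, whereas you apply it twice to the numerator and denominator separately; these are the same manipulation in a different order.

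One small correction: your final display should read $\leq$ rather than $=$. After cancelling you obtain $\sum_{y\in A} p_m(x,y)$ where $A=\{y:p_n(x,y)>0\}$, not the full sum over $V$, so in general you only get $\mathbb{P}_x(X_n=\dagger)+\sum_{y\in A}p_m(x,y)\leq \mathbb{P}_x(X_n=\dagger)+\mathbb{P}_x(X_m\neq\dagger)$. This is exactly what the lemma asserts, so the proof goes through unchanged; the paper records the same inequality at this step.
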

\begin{proof}[Proof of \cref{lem:ratios}]
    Let  $A$ be the set of vertices $x\in V$ such that $p_n(X_0,x)>0$. Then we have that \begin{multline*}
    \E{\frac{p_m(X_n,X_0)}{p_n(X_n,X_0)} \mathbbm{1}(X_n \neq \dagger)} = \E{\frac{p_m(X_0,X_n)}{p_n(X_0,X_n)} \mathbbm{1}(X_n \neq \dagger)}   \\=  \sum_{x\in A} p_n(X_0,x) \frac{p_m(X_0,x)}{p_n(X_0,x)}= \sum_{x\in A} p_m(X_0,x)\leq \mathbb{P}(X_m \neq \dagger),
    \end{multline*}
    which is easily seen to imply the claim.
\end{proof}

\begin{proof}[Proof of \Cref{prop:p_and_g}]
Using \cref{lem:ratios}, an application of the Borel-Cantelli Lemma implies that there exists an almost surely finite  random variable $\gamma$
such that 
\begin{equation}
\label{eq:m2n2}
p_m(X_n,X_0)\leq \gamma (m+1)^2(n+1)^2 p_n(X_n,X_0)
\end{equation}
for every $n,m\geq0$. Fix $\varepsilon>0$, let $n \geq 1$ and let $N= \lceil p_n(X_n,X_0)^{-\varepsilon}\rceil$. We deduce by summing \eqref{eq:m2n2} over $0\leq m \leq N$ that
\[
\mathbb{G}(X_n,X_0) = \sum_{m=0}^\infty p_m(X_n,X_0) \leq \gamma (N+1)^3 (n+1)^2 p_n(X_n,X_0) + \sum_{m=N+1}^\infty p_m(X_n,X_0).
\]
Since $p_m(X_n,X_0) \leq \sup_v p_m(v,X_0)$ decays superpolynomially in $m$ by \eqref{eq:SPD} we can write this estimate in asymptotic notation as 
\[
\mathbb{G}(X_n,X_0) \leq p_n(X_n,X_0)^{1-3\varepsilon-o(1)}+p_n(X_n,X_0)^{\omega(1)} \qquad \text{ a.s.\ as $n\to\infty$ for each fixed $\varepsilon>0$,}
\]
where $o(1)$ and $\omega(1)$ denote quantities tending to $0$ and $+\infty$ respectively. The claim follows since $\varepsilon>0$ was arbitrary and the inequality $\mathbb{G}(X_n,X_0)\geq p_n(X_n,X_0)$ holds trivially.
\end{proof}

Since $p_n(X_n,X_0)$ decays superpolynomially under the superdiffusivity assumption \eqref{eq:superdiffusivity} by Varopoulos-Carne and we only require polynomial decay of $\mathbb{G}(X_n,X_0)$ to apply \cref{thm:mainPolyDecay}, to prove \cref{thm:speedTheorem} it would suffice for us to have a much weaker comparison of the two quantities than that provided by \Cref{prop:p_and_g}. Such comparison inequalities can be provided by the proof of \Cref{prop:p_and_g} under much weaker assumptions on transition probabilities that are only barely stronger than transience. For example, this argument is able to handle the ballistic case under the mild additional assumption that there exists $c>0$ such that
\begin{equation}
\label{eq:logp_ncondition}
\sup_u p_n(u,v) \leq \frac{C_v}{n (\log n)^{1+c}} \qquad \text{ for every $v \in V$ and $n\geq 2$},
\end{equation}
where $C_v$ is a finite constant depending on the choice of $v$.
Unfortunately, we believe that such transition probability estimates need not hold in general, even when the random walk has positive speed. Indeed, identifying the origin of $\Z^2$ with the root of a binary tree gives an example where the random walk has positive liminf speed almost surely but where $p_n(0,0)$ is at least the probability that the walk makes an excursion of length $n$ from the origin to itself in $\Z^2$, which is of order $n^{-1} (\log n)^{-2}$. Replacing $\Z^2$ in this example by a tree of slightly superquadratic growth should allow one to construct examples where the random walk has positive speed but where \eqref{eq:logp_ncondition} does not hold for any $c>0$; we do not pursue this further here. We believe that there exist examples where the random walk has positive speed but where $\mathbb{G}(X_n,X_0)$ decays very slowly, but this seems to require a more involved construction.

\subsection{Spatially-dependent killing and the proof of Theorem \ref{thm:speedTheorem}}

We now describe how we circumvent the issue discussed at the end of the previous subsection by introducing \emph{spatially dependent killing} to our network, where we will take $K(x)$ to be a function of the distance of $x$ from some fixed origin vertex $o$. We will show under the hypotheses of \cref{thm:speedTheorem} that this killing function can be chosen to decay sufficiently quickly that the random walk has a positive probability never to be killed, but decay sufficiently slowly that the resulting network with killing satisfies \eqref{eq:SPD}.

\medskip

We begin by finding the marginal rate of decay under which the resulting network with killing automatically satisfies \eqref{eq:SPD}. Given a network $N=(V,E,c)$ and a fixed origin vertex $o$, we write $\langle x \rangle = 2 \vee d(o,x)$ for each $x\in V$ to avoid division by zero.

\begin{lemma} \label{lem:combbound} 
Let $N=(V,E,c)$ be a network with $c_{\min}=\inf_{x\in V} c(x)>0$, fix a vertex $o\in V$, let $\gamma \in \R$ and let $K:V\to [0,\infty)$ be the killing function defined by $K(x) = c(x) \min\{1,$ $\langle x \rangle^{-2} (\log \langle x \rangle)^\gamma \}$. Then there exists a positive constant $c=c(\gamma)$ such that
    \[  
    p_n(x,o) \leq \sqrt{\frac{8c(o)}{c_{\min}}} \exp\Big[-c\, (\log n)^{\gamma/2}\Big]
    \]
    for every $x\in V$ and $n\geq 2$. In particular, if $\gamma>2$ then $(V,E,c,K)$ satisfies \eqref{eq:SPD}.
\end{lemma}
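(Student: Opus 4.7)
The plan is to decompose $p_n(x,o)$ according to whether the walk has made an excursion of distance at least $r$ from $o$ by time $n$, treating the long-excursion contribution with the Varopoulos-Carne inequality and the near-$o$ contribution by estimating the accumulated killing, then balancing the two on a single spatial scale $r=A\sqrt{n}(\log n)^{\gamma/4}$ with $A=A(\gamma)>0$ to be chosen. Letting $T=\inf\{t\geq 0:d(X_t,o)\geq r\}$, I would split
\[
p_n(x,o)=\mathbb{P}_x(T\leq n,\,X_n=o,\,\tau>n)+\mathbb{P}_x(T>n,\,X_n=o,\,\tau>n).
\]

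For the first term, I apply the strong Markov property at $T$ and then Varopoulos-Carne to $p_{n-T}(X_T,o)$. Since $K(y)\leq c(y)$ by construction gives $c(y)\leq c(y)+K(y)\leq 2c(y)$, the Varopoulos-Carne prefactor satisfies $\sqrt{(c(o)+K(o))/(c(X_T)+K(X_T))}\leq\sqrt{2c(o)/c_{\min}}$; bounding the spectral radius by $1$ and using $d(X_T,o)\geq r$ together with $n-T\leq n$ then yields a bound of $\sqrt{2c(o)/c_{\min}}\exp(-r^2/(2n))$. For the second term, the walk stays inside $B(o,r)=\{y:d(y,o)\leq r\}$ for all $n$ steps, so I need a uniform lower bound on the per-step killing probability there. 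Writing $f(z)=z^{-2}(\log z)^\gamma$, one has $K(y)/(c(y)+K(y))\geq\tfrac{1}{2}\min\{1,f(\langle y\rangle)\}$, and since $f$ attains its unique maximum on $[2,\infty)$ at $z=e^{\gamma/2}$ and is monotone on each side, $\min_{z\in[2,r]}f(z)=\min(f(2),f(r))$, which equals $f(r)=r^{-2}(\log r)^\gamma$ for $r$ sufficiently large (depending on $\gamma$). Hence there is a constant $c_1=c_1(\gamma)>0$ with killing rate at least $c_1 r^{-2}(\log r)^\gamma$ throughout $B(o,r)$, and the second term is at most $\exp(-c_1 nr^{-2}(\log r)^\gamma)$.

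Substituting $r=A\sqrt{n}(\log n)^{\gamma/4}$ makes both exponents proportional to $(\log n)^{\gamma/2}$, and adding the two contributions gives
\[
p_n(x,o)\leq\left(\sqrt{2c(o)/c_{\min}}+1\right)\exp(-c(\log n)^{\gamma/2})\leq\sqrt{8c(o)/c_{\min}}\exp(-c(\log n)^{\gamma/2}),
\]
where the last step uses $c(o)\geq c_{\min}$; the finitely many values of $n$ for which the large-$r$ assumption fails are handled by noting that the claimed bound is bounded away from $0$ in that regime. The "in particular" claim then follows at once because $(\log n)^{\gamma/2}/\log n\to\infty$ when $\gamma>2$. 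The main obstacle is the uniform killing-rate lower bound in $B(o,r)$, which requires the monotonicity analysis of $f$ and the case split in the $\min$; note that for $\gamma\leq 0$ the claim is trivial since $(\log n)^{\gamma/2}$ is bounded, so only $\gamma>0$ is genuinely interesting.
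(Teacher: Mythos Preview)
Your proposal is correct and follows essentially the same approach as the paper: the same decomposition via the stopping time at which the walk first reaches distance $r$ from $o$, the same Varopoulos--Carne bound on the long-excursion part, the same accumulated-killing bound on the near-$o$ part, and the same choice $r\asymp n^{1/2}(\log n)^{\gamma/4}$ to balance the two exponents. Your write-up is in fact slightly more careful than the paper's in justifying the uniform killing lower bound via the monotonicity of $z\mapsto z^{-2}(\log z)^\gamma$ and in noting that the case $\gamma\leq 0$ is trivial.
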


The rough idea behind this lemma is as follows: Suppose we run a random walk for  time $n$ started at some vertex $x$. If $d(o,X_m) \gg \sqrt{n}$ for some $0\leq m \leq n$ then the probability of hitting the origin at time $n$ is small as a consequence of Varopoulos-Carne. On the other hand, if this never happens, the higher rate of killing ensures that the walk is killed before time $n$ with high probability and is therefore unlikely to hit the origin at time $n$.

\begin{proof}[Proof of Lemma~\ref{lem:combbound}]
Let $\mathbb{P}_x$ denote the law of the random walk $X=(X_n)_{n\geq 0}$ on the network with killing $(V,E,c,K)$ started at some fixed vertex $x \in V$, and let $\tau_\dagger$ denote the time the walk is killed (i.e.\ first visits the graveyard state $\dagger$). We define $d(o,\dagger)=\infty$ and decompose
\begin{multline}
\label{eq:split}
\mathbb{P}_x(X_n=o) = 
\mathbb{P}_x(X_n=o \text{ and } d(o,X_m) > r \text{ for some } 0\leq m \leq n)
\\+
\mathbb{P}_x(X_n=o \text{ and } d(o,X_m) \leq r \text{ for every $0\leq m \leq n$})
\end{multline}
for  each $n,r \geq 2$, where $r$ is a parameter we will optimize over at the end of the proof. 
    We begin by analysing the first term on the right hand side of \eqref{eq:split}. Let $\kappa$ be the stopping time $\kappa:=\inf \{m \geq 0: d(o,X_m) >r\}$. We apply the strong Markov property at $\kappa$ together with Varopoulos-Carne to give that
    \begin{align*}
    \mathbb{P}_x(X_n=o \text{ and } d(o,X_m) > r \text{ for some } 0\leq m \leq n) 
    &\leq \sum_{m=0}^n \sum_{z \in V} \mathbb{P}_x(\kappa=m, X_\kappa =z)\mathbb{P}_z(X_{n-m}=o)\nonumber\\
    &\leq  \sqrt{\frac{c(o)+K(o)}{c(z)+K(z)}} \exp\left[ - \frac{r^2}{2n} \right] \leq \sqrt{\frac{2c(o)}{c_{\min}}} \exp\left[ - \frac{r^2}{2n} \right],
    \end{align*}
    where $c_{\min}=\inf_{z\in V} c(z)$, and where the final inequality follows by definition of $K$.
We now turn our attention to the second term on the right hand side of \eqref{eq:split}. Each time the walk makes a step at distance at most $r$ it is killed with probability at least $\frac{1}{2} (1 \wedge r^{-2} (\log r)^\gamma)$. Letting $c_1=c_2(\gamma)$ be a positive constant such that this probability is at least $c_1 r^{-2} (\log r)^\gamma$, we deduce that
    \begin{align*}
    \mathbb{P}_x\bigl(X_n=o \text{ and } d(o,X_m) \leq r \text{ for every $0\leq m \leq n$}\bigr)
     &\leq
    \mathbb{P}_x\bigl(\tau_\dagger > n \text{ and } d(o,X_m) \leq r \text{ for every $0\leq m \leq n$}\bigr)\\ &\leq  \left(1-\frac{c_1 (\log r)^\gamma}{r^2}\right)^n \leq \exp\left[- \frac{c_1(\log r)^\gamma n}{r^2}\right],
    \end{align*}
    where we used the inequality $1-t \leq e^{-t}$ in the final inequality. Substituting these two estimates into \eqref{eq:split} yields that
    \[
\mathbb{P}_x(X_n=o) \leq \sqrt{\frac{2 c(o)}{c_{\min}}} \left(\exp\left[ - \frac{r^2}{2n} \right] + \exp\left[- \frac{c_1(\log r)^\gamma n}{r^2}\right]\right),
    \] 
    and the claim follows by taking $r= \lceil n^{1/2} (\log n)^{\gamma/4} \rceil$. \qedhere
    
\end{proof}

Let $N=(V,E,c)$ be a network, let $o$ be a vertex of $N$, and let $X=(X_n)_{n\geq 0}$ be the random walk on $N$. Let $r>0$ and let $\mathscr{S}_r$ be the event that
\[
\liminf_{n\to\infty} \frac{d(o,X_n)}{n^{1/2} (\log n)^r} >0.
\]
We next wish to show that for any choice of $r$, we can choose the killing function $K$ as in \cref{lem:combbound} such that if $\mathscr{S}_r$ holds, the walk does not ``feel" the effects of the killing. More precisely, we can ensure the killing function decays quickly enough such that conditional on the path of the walk, the walk almost surely has a positive probability of never getting killed. To formulate this lemma, let us first note that we can couple the random walks on $(V,E,c)$ and $(V,E,c,K)$ so that they coincide up until the killing time $\tau_\dagger$. Writing $X$ for the unkilled walk and writing $\mathbf{P}_x$ for the joint law of $X$ and $\tau_\dagger$ when $X$ is started at $x\in V$, this coupling is determined by the equality
\[
\mathbf{P}_x(\tau_\dagger = n \mid X) = K(X_{n-1}) \prod_{i=0}^{n-2} (1-K(X_i)).
\]

\begin{lemma} \label{lem:feels} 
Let $N=(V,E,c)$ be a network with $c_{\min}=\inf_{x\in V} c(x)>0$, fix a vertex $o\in V$, let $\gamma \in \R$, and let $K:V\to [0,\infty)$ be the killing function defined by $K(x) = c(x) \min\{1,$ $\langle x \rangle^{-2} (\log \langle x \rangle)^\gamma \}$. If $X$ is a random walk on $N$ and $\gamma+1 < 2r$, then
$\mathbf{P}_x(\tau_\dagger = \infty \mid X) > 0$ 
almost surely on the event $\mathscr{S}_r$.
\end{lemma}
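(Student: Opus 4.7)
The plan is to reduce the claim to a Borel--Cantelli-style computation: given the unkilled path $X$, the killing times form independent Bernoulli trials along the path, so
\[
\mathbf{P}_x(\tau_\dagger = \infty \mid X) = \prod_{i=0}^\infty (1-p_i),
\]
where $p_i$ is the per-step killing probability at $X_i$, namely $p_i = K(X_i)/(c(X_i)+K(X_i))$. This infinite product is positive if and only if $\sum_i p_i < \infty$, so it suffices to show that this sum converges almost surely on $\mathscr{S}_r$.

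To bound $p_i$ from above, observe that the factor of $c(X_i)$ cancels in the formula for the killing probability: writing $f(x) = \langle x \rangle^{-2} (\log \langle x \rangle)^\gamma$, we have
\[
p_i = \frac{\min\{1, f(X_i)\}}{1+\min\{1, f(X_i)\}} \leq f(X_i).
\]
On the event $\mathscr{S}_r$, there exists an almost surely positive random variable $\eta$ such that $d(o,X_i) \geq \eta\, i^{1/2}(\log i)^r$ for all sufficiently large $i$. Since the walk takes single edge steps we also have $d(o,X_i) \leq d(o,X_0)+i$, and these two bounds together give $\log \langle X_i \rangle = \Theta(\log i)$ as $i\to\infty$. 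Plugging these estimates into $f(X_i)$ yields
\[
p_i \leq f(X_i) \leq \frac{C(\log i)^\gamma}{i(\log i)^{2r}} = \frac{C}{i(\log i)^{2r-\gamma}}
\]
for some random $C=C(\eta,\gamma,r,d(o,X_0))<\infty$ and all sufficiently large $i$, almost surely on $\mathscr{S}_r$.

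Under the assumption $\gamma+1<2r$, we have $2r-\gamma>1$, so the series $\sum_i i^{-1}(\log i)^{-(2r-\gamma)}$ converges. Thus $\sum_i p_i <\infty$ almost surely on $\mathscr{S}_r$, which gives $\prod_i (1-p_i)>0$ and hence $\mathbf{P}_x(\tau_\dagger=\infty \mid X)>0$ almost surely on $\mathscr{S}_r$, as required. The proof is essentially just arithmetic once the right framing is in place; the main (minor) care-point is to confirm that the $c(X_i)$ factors drop out of the per-step killing probability, so that the unbounded-conductance hypothesis of the network causes no trouble, and to verify the two-sided logarithmic comparison $\log\langle X_i\rangle \asymp \log i$ so that the sign of $\gamma$ is irrelevant.
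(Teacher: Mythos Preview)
Your proof is correct and follows essentially the same approach as the paper: express the conditional survival probability as an infinite product, reduce positivity to summability of the per-step killing probabilities, and then bound these by $C\, i^{-1}(\log i)^{\gamma-2r}$ on $\mathscr{S}_r$. You are in fact slightly more careful than the paper in two places: you correctly identify the per-step killing probability as $K(X_i)/(c(X_i)+K(X_i))$ (the paper writes the product as $\prod(1-K(X_i))$, which is a harmless shorthand but would be problematic if read literally when $c$ is unbounded), and you explicitly justify the two-sided comparison $\log\langle X_i\rangle\asymp \log i$ so that the bound goes through for both signs of $\gamma$.
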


\begin{proof}[Proof of \cref{lem:feels}]
We can write the conditional probability $\mathbf{P}_x(\tau_\dagger = \infty \mid X)$ as an infinite product
\[
\mathbf{P}_x(\tau_\dagger = \infty \mid X) = \prod_{i=0}^\infty (1-K(X_i)), \quad \text{ which is positive if and only if } \quad \sum_{i=0}^\infty K(X_i) <\infty.
\]
We have by calculus that there exists a random variable $\alpha$ taking values in $[1,\infty]$ that is finite on the event $\mathscr{S}_r$ and satisfies
$K(X_n) \leq \alpha (\log n)^{\gamma - 2r} n^{-1}$
for every $n\geq 1$, and it follows that if $2r>1+\gamma$ then $\sum_{i=0}^\infty K(X_i)<\infty$ on the event $\mathscr{S}_r$ as required.
\end{proof}

We are now ready to complete the proofs of \cref{thm:speedTheorem,thm:speedTheorem_density}.

\begin{proof}[Proof of \cref{thm:speedTheorem}]
Let $r>3/2$ and $2<\gamma<2r-1$.
Let $N=(V,E,c)$ be a network with $c_{\min}=\inf_{x\in V} c(x)>0$, fix a vertex $o\in V$, and let $K:V\to [0,\infty)$ be the killing function defined by $K(x) = c(x) \min\{1,$ $\langle x \rangle^{-2} (\log \langle x \rangle)^\gamma \}$. Couple the random walk $X$ on $N$ with the killing time $\tau_\dagger$ as above, write $X^\dagger$ for the killed walk, and assume that the superdiffusivity event $\mathscr{S}_r$ has positive probability.
Let $p^\dagger_n$ and $\mathbb{G}_\dagger$ denote transition probabilities and the Green's function with respect to the killed network $N_\dagger = (V,E,c,K)$. \cref{lem:combbound} implies that $N_\dagger$ satisfies the superpolynomial decay condition \eqref{eq:SPD}, and we deduce from \Cref{prop:p_and_g} that 
\begin{equation}
\label{eq:daggerlogratio}
\lim_{n\to\infty}\frac{\log p^\dagger_n(X_n^\dagger,X_0^\dagger)}{\log \mathbb{G}_\dagger(X_n^\dagger,X_0^\dagger)}=1
\end{equation}
almost surely, where the ratio is considered to be equal to $1$ when $X_n^\dagger=\dagger$. Varopoulos-Carne yields that
\begin{align*}
p^\dagger_n(X_n^\dagger,X_0^\dagger) &= \exp\left[-\Omega\left((\log n)^{2r}\right) \right] & \text{ as $n\to\infty$}
\intertext{when $\mathscr{S}_r$ holds, and hence by \eqref{eq:daggerlogratio} that}
\mathbb{G}_\dagger(X_n^\dagger,X_0^\dagger) &= \exp\left[-\Omega\left((\log n)^{2r}\right) \right] & \text{ as $n\to\infty$}
\end{align*}
almost surely on the event $\mathscr{S}_r$. (Here we recall that $\Omega(f(n))$ denotes a quantity that is lower bonded by a (possibly random) positive multiple of $f(n)$ for large values of $n$.) Since $r>3/2> 1/2$, this decay is superpolynomial, and it follows from \cref{thm:mainPolyDecay} that $X^\dagger$ is either killed or has infinitely many cut times almost surely on the event $\mathscr{S}_r$. Since we also have that the conditional probability $\mathbf{P}_x(\tau_\dagger = \infty \mid X)$ is almost surely positive on the event $\mathscr{S}_r$, we deduce that $X$ has infinitely many cut times almost surely on the event $\mathscr{S}_r$ as claimed.
\end{proof}

\begin{proof}[Proof of \cref{thm:speedTheorem_density}]
Since $G$ has bounded degrees and the walk has positive liminf speed almost surely, it follows as above that we can take a bounded killing function $K$ so that the walk has a.s.\ positive conditional probability not to be killed and the killed Green's function $\mathbb{G}_\dagger(X_n^\dagger,X_o^\dagger)$ decays exponentially. On the other hand, since the degrees and the killing function are both bounded, there exists a positive constant $c$ such that $\mathbb{G}_\dagger(X_{n+1}^\dagger,X_o^\dagger) \geq c \cdot \mathbb{G}_\dagger(X_{n}^\dagger,X_o^\dagger)$ for every $n\geq 0$ such that $X_{n+1}^\dagger \neq \dagger$. Combined with exponential decay this implies that if we define $A_a=\{n : Z_{n+1}^\dagger \leq a \min_{m\leq n}Z_m^\dagger\}$, where $Z_n^\dagger=\mathbb{G}_\dagger(X_{n}^\dagger,X_o^\dagger)$, and define $\mathcal{A}_a$ to be the event that $A_a$ has positive $\liminf$ density then $\bigcup_{k=1}^\infty \mathcal{A}_{(k-1)/k}$ has probability $1$. 
On the other hand, by optional stopping, for each $n \geq 1$ the conditional probability that $n$ is a cut time given everything the walk has done up to time $n$ is bounded below by $1-a$ whenever $n\in A_a$. From here the claim follows easily by standard arguments and we omit the details.
\end{proof}


\section{Sharpness for birth-death chains} \label{sec:sharp}
In this final section, we demonstrate that the integral condition given in Theorem \ref{thm:mainPolyDecay} is sharp by comparing our results to those of Cs\'aki, F\"oldes, and R\'ev\'esz \cite{MR2644879} on the cut times of birth-death chains. Throughout this section, $(X_n)_{n\geq0}$ will denote a random walk on $\Z_{\geq 0}$ with transition probabilities of the form \begin{equation*}
	E_i:=\pr(X_{n+1}=i+1\mid X_n=i)=1-\pr(X_{n+1}=i-1\mid X_n=i)=\begin{cases}
		1 &\text{if } i=0\\
		1/2+p_i&\text{otherwise}
	\end{cases},
\end{equation*}
where $-1/2< p_i<1/2$ for each $i\geq 1$.
For each $m\geq 0$, define \[
D(m) = 1+\sum_{j=1}^{\infty} \prod_{i=1}^{j} \bigg(\frac{1}{E_{m+i}}-1\bigg).
\]
The aforementioned work \cite{MR2644879} establishes the following dichotomy. (Here we rephrase their theorem in terms of cut times and omit the strengthened conclusion concerning strong cut points.)
\begin{theorem}[{\cite[Theorem 1.1]{MR2644879}}]\label{thm:import}
	Let $(X_n)_{n\geq 0}$ be a transient birth-death chain as defined above with $0\leq p_i<1/2$ for each $i\geq 1$.
	\begin{itemize}
		\item If 
		$\sum_{n=2}^\infty (D(n)\log n)^{-1}<\infty$,
		then $(X_n)$ has finitely many cut times a.s.
		\item If $D(n)\leq n(\log n)^{1/2}$ and $\sum_{n=2}^\infty (D(n)\log n)^{-1}=\infty$,
		then $(X_n)$ has infinitely many cut times a.s.
	\end{itemize}
\end{theorem}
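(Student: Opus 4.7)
The plan is to exploit the rigid structure of cut times for birth-death chains. Since $\{X_0,\ldots,X_n\}$ is always a contiguous interval $[L_n,M_n]$, a time $n$ is a cut time if and only if $X_n=M_n$, $X_{n+1}=M_n+1$, and the walk from $X_{n+1}$ never returns to $M_n$. This gives a bijection between cut times and levels $k$ for which the event $A_k=\{\text{walk never returns to }k\text{ after first hitting }k+1\}$ occurs, and a gambler's-ruin computation on $\{k,k+1,\ldots\}$ (solving the recurrence $h(n+1)-h(n)=\rho_n(h(n)-h(n-1))$ for $\rho_i=(1-E_i)/E_i$) yields $\pr(A_k)=1/D(k)$. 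The problem thus reduces to determining when the sum $\sum_k\mathbf{1}_{A_k}$ is finite or infinite almost surely, and the expected number of cut times is exactly $\sum 1/D(k)$.

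For the divergent direction, where $\sum 1/(D(n)\log n)=\infty$ and $D(n)\leq n(\log n)^{1/2}$, I would use a Kochen--Stone (Borel--Cantelli counterpart) argument. The strong Markov property gives
\[
\pr(A_k\cap A_l) = \frac{1}{T(k,l)\,D(l)} \qquad (k<l),
\]
where $T(k,l)=1+\rho_{k+1}+\rho_{k+1}\rho_{k+2}+\cdots+\rho_{k+1}\cdots\rho_l$ is the partial sum defining $D(k)$. The pairwise correlation ratio $\pr(A_k\cap A_l)/(\pr(A_k)\pr(A_l))=D(k)/T(k,l)$ can be bounded uniformly on a logarithmically sparse subsequence of levels (this is exactly what the hypothesis $D(n)\leq n(\log n)^{1/2}$ is designed to guarantee), so Kochen--Stone applied along such a subsequence combined with a tail $0$--$1$ argument produces infinitely many cut times.

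For the convergent direction, $\sum 1/(D(n)\log n)<\infty$, direct Borel--Cantelli on the $A_k$ fails because $\sum\pr(A_k)=\sum 1/D(k)$ may still diverge; the expected number of cut times can be infinite while the actual number is a.s.\ finite. The strategy is to group the levels into dyadic scales $[2^j,2^{j+1})$ and prove the sharp scale-wise estimate
\[
\pr\Bigl(\bigcup_{k\in[2^j,2^{j+1})} A_k\Bigr)\leq \frac{C\cdot 2^j}{j\,D(2^j)},
\]
whose sum over $j$ matches the convergence of $\sum 1/(D(n)\log n)$, so a single Borel--Cantelli over scales finishes. The heuristic reason for the $1/\log$ improvement is that the events $A_k$ within a common dyadic scale are strongly positively correlated: conditional on $A_k$, the walk from $k+1$ is a Doob $h$-transform on $\{k+1,k+2,\ldots\}$, which substantially boosts the conditional probability of other $A_\ell$ on the same scale, so the union is much smaller than the naive sum of individual probabilities.

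The main obstacle is honestly proving the scale-wise bound with the correct $1/\log M$ saving: inclusion--exclusion only recovers the naive sum $\sum_{k\in[M,2M)}\pr(A_k)$, so the proof must either couple the $h$-transformed walk after the first $A_k$ fires with a freshly started walk on a comparable range, or directly analyze the generating function counting successive above-level excursions, in order to convert positive correlation into a genuine union-bound improvement.
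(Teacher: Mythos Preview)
The paper does not prove this statement at all: Theorem~\ref{thm:import} is quoted verbatim from Cs\'aki, F\"oldes, and R\'ev\'esz \cite{MR2644879} and is used as a black box in Section~\ref{sec:sharp} to establish the sharpness of the integral condition in Theorem~\ref{thm:mainPolyDecay}. There is therefore no proof in the present paper to compare your proposal against.

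That said, your outline is broadly in the right direction for a birth--death chain argument. The reduction to the level-events $A_k$ and the gambler's-ruin computation $\pr(A_k)=1/D(k)$ are standard and correct. For the divergent case your Kochen--Stone strategy along a sparse subsequence is plausible, though you have not actually verified that the hypothesis $D(n)\leq n(\log n)^{1/2}$ suffices to control the correlation ratios $D(k)/T(k,l)$ on a subsequence dense enough to keep the series divergent; this is where the real work lies. For the convergent case you have correctly identified the crux --- that naive Borel--Cantelli fails and one needs a genuine $1/\log$ saving in the union bound over a dyadic scale --- but your proposal stops at a heuristic (positive correlation via the $h$-transform) without indicating how to turn it into the inequality $\pr(\bigcup_{k\in[2^j,2^{j+1})}A_k)\leq C\,2^j/(j\,D(2^j))$. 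As written this is an honest sketch with the hard step flagged but not done; to assess it properly you would need to consult the original paper \cite{MR2644879}.
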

We use this Theorem to prove the following partial converse of Theorem \ref{thm:mainPolyDecay}.  
We let $\mathbb{G}(n)=\mathbb{G}(n,0)$ denote the Green's function associated with $(X_n)$ and say a function $F:[0,\infty)\rightarrow(0,\infty)$ is \textbf{eventually log-convex} if there exists $r\geq 0$ such that the restriction of $F$ to the interval $[r,\infty)$ is log-convex.
\begin{proposition} \label{prop:sharpness}
	Given any decreasing differentiable bijection $\Phi:[0,\infty)\rightarrow(0,1]$ that is eventually log-convex and satisfies
	\[\int_0^{1} \frac{1}{u (1\vee \log \Phi^{-1}(u))}\ \mathrm{d}u<\infty,\]
	there exists a nearest-neighbour random walk $(X_n)_{n\geq 0}$ on $\Z_{\geq 0}$ with Green's function $\mathbb{G}(n)=\mathbb{G}(n,0)$, such that
		$\limsup_{n\to\infty}\Phi(n)^{-1}\mathbb{G}(X_n) < \infty$  
	and $(X_n)_{n\geq 0}$ has at most finitely many cut times almost surely.
\end{proposition}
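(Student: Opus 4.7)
The plan is to deduce the proposition from Theorem~\ref{thm:import} by explicitly constructing a birth-death chain with $p_i\geq 0$ that satisfies the CFR convergence criterion $\sum_m 1/(D(m)\log m)<\infty$ (yielding finitely many cut times) together with $\mathbb{G}(X_n,0)=O(\Phi(n))$ almost surely. The standard one-dimensional analysis gives $\mathbb{G}(k,0)=\sum_{j\geq k}\prod_{i=1}^j r_i$ for $r_i=(1-E_i)/E_i$, together with the key identity $\mathbb{G}(k+1,0)/\mathbb{G}(k,0)=1-1/D(k)$, so that $-\log\mathbb{G}(k,0)\sim\sum_{j<k}1/D(j)$ as $k\to\infty$. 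Reversibility yields $\mathbb{G}(0,j)\asymp D(j)$ whenever $E_j$ stays bounded away from $0$ and $1$, and hence $\mathbf{E}[\tau_k]\asymp T(k):=\sum_{j<k}D(j)$. A free choice of the sequence $D$ therefore determines the chain, and the problem reduces to tuning $D$ so that both estimates close.

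Setting $\phi:=-\log\Phi$, the substitution $u=\Phi(x)$ rewrites the integral hypothesis as $\int_0^\infty \phi'(x)/\log x\,dx<\infty$, with $\phi$ eventually concave by the log-convexity of $\Phi$. The choice of $D$ I would use is dictated by the ODE $T'(k)=1/\sqrt{\phi'(T(k))}$, setting $D(k):=T(k+1)-T(k)$ for $k$ large (and $D(k)$ arbitrary but above $1$ for small $k$). The rationale is the key identity
\[
\sum_{j<k}\frac{1}{D(j)}\approx \int_{k_0}^k\sqrt{\phi'(T(x))}\,dx=\int_{T(k_0)}^{T(k)}\phi'(u)\,du=\phi(T(k))+O(1),
\]
obtained by the substitution $u=T(x),\ dx=\sqrt{\phi'(u)}\,du$: it gives $\mathbb{G}(k,0)\asymp\Phi(T(k))$, i.e., the Green's function at the typical position at time $n$ is of order $\Phi(n)$. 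The same substitution transforms the CFR sum into $\int \phi'(t)/\log T^{-1}(t)\,dt$, and Cauchy-Schwarz applied to the defining identity $k=\int_0^{T(k)}\sqrt{\phi'(u)}\,du$ gives $T(k)\phi(T(k))\geq k^2$, hence $T^{-1}(t)\gtrsim\sqrt{t/\phi(t)}$. Since the integral hypothesis (via concavity of $\phi$) forces $\phi$ to grow with $\log\phi(t)=o(\log t)$, we have $\log T^{-1}(t)\geq c\log t$, so the CFR integral is dominated by a constant multiple of $\int\phi'(t)/\log t\,dt$ and is finite.

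It remains to upgrade the deterministic estimate $\mathbb{G}(T^{-1}(n),0)=O(\Phi(n))$ to the almost sure bound on $\mathbb{G}(X_n,0)$. I would first control the running maximum $L_n:=\max_{m\leq n}X_m$ via a variance estimate for $\tau_k$ coming from the excursion decomposition, namely $\mathrm{Var}(\tau_k)\lesssim\sum_{j<k}D(j)^2$, which for our explicit $D$ is $o(T(k)^2)$; Chebyshev and Borel-Cantelli along $k=2^r$ then give $L_n\geq T^{-1}(n/2)$ eventually a.s. To pass from $L_n$ to $X_n$ itself, I would combine the explicit birth-death hitting formula $\mathbf{P}_k(\text{hit }\ell)=\mathbb{G}(k,0)/\mathbb{G}(\ell,0)$ with iterated optional stopping applied to the supermartingale $\mathbb{H}(X_n,0)=\mathbb{G}(X_n,0)/\mathbb{G}(0,0)$ along a geometric sequence of scales, bounding the amount by which $X_n$ can dip below $L_n$.

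The main obstacle is this final oscillation control, since the construction inevitably produces a nearly symmetric chain ($p_k\to 0$) and $X_n$ can legitimately fluctuate on the diffusive scale around its running maximum. The essential point that makes the argument close is that the integral hypothesis gives substantial slack in $\Phi$: under it, $\phi(cn)-\phi(n)=o(1)$ for every fixed $c>0$, so a constant-factor loss of $X_n$ relative to $T^{-1}(n)$ is harmless, and the log-convexity of $\Phi$ keeps the geometric series controlling the down-crossings summable.
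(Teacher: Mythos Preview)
Your Cauchy--Schwarz step is pointed the wrong way. From $k=\int_0^{T(k)}\sqrt{\phi'(u)}\,du$ one gets $k^2\le T(k)\,\phi(T(k))$, which after setting $t=T(k)$ reads $T^{-1}(t)\le\sqrt{t\,\phi(t)}$, an \emph{upper} bound on $T^{-1}$. You need a \emph{lower} bound $\log T^{-1}(t)\ge c\log t$ to dominate $\int\phi'(t)/\log T^{-1}(t)\,dt$ by $\int\phi'(t)/\log t\,dt$; the inequality you wrote, $T^{-1}(t)\gtrsim\sqrt{t/\phi(t)}$, does not follow. (Indeed, since $D(j)\ge 1$ forces $T(k)\ge k$, the trivial bound is also in the wrong direction.) One can try instead to use concavity of $\phi$ to get $k\ge T(k)\sqrt{\phi'(T(k))}$, hence $T^{-1}(t)\ge t\sqrt{\phi'(t)}$, but turning this into $\log T^{-1}(t)\gtrsim\log t$ requires a lower bound on $\phi'$ that the hypotheses do not supply. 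So as written the verification of the CFR convergence criterion has a genuine gap.

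More broadly, the detour through the ODE $T'=1/\sqrt{\phi'\!\circ T}$ and hitting-time asymptotics is unnecessary and is what creates the difficulty. The paper exploits the fact that for a birth--death chain one can \emph{prescribe} the Green's function to be any positive, strictly decreasing, convex sequence via \eqref{eq:pGrel}. It simply sets $\mathbb{G}(n)=C\,\widetilde\Phi(n)$ with
\[
\widetilde\Phi(x)=\Phi\bigl((x+M)^4\bigr)\,e^{-\sqrt{\log(x+2)}},
\]
a log-convex function for which the integral condition is preserved. The CFR sum $\sum 1/(D(n)\log n)$ then becomes, via \eqref{eq:DGrel}, essentially $\int \phi'/\log x\,dx$ with no change of variables through $T$ required. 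The slack you try to manufacture from ``$\phi(cn)-\phi(n)=o(1)$'' and oscillation control is built in by the fourth power: it suffices to show $X_n\ge n^{1/4}$ eventually, and the explicit factor $e^{-\sqrt{\log(x+2)}}$ gives a clean lower bound on $-(\log\widetilde\Phi)'$, hence on $1/D(n)$ and on the geometric parameter of the occupation time at $n$, from which $X_n\ge n^{1/2-o(1)}$ follows by Borel--Cantelli. This sidesteps both your variance estimate for $\tau_k$ and the down-crossing analysis you flag as the main obstacle.
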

In the proof of this proposition, we will utilize the following two elementary identities relating the quantities $p_n$, $\mathbb{G}(n)$, and $D(n)$ for each $n\geq 1$:
\begin{center}
	\begin{minipage}[b]{.45\textwidth}
		\vspace{-\baselineskip}
		\begin{equation}
			D(n-1)= \frac{\mathbb{G}(n-1)}{\mathbb{G}(n-1)-\mathbb{G}(n)} \label{eq:DGrel}
		\end{equation}
	\end{minipage}
	\hfill and\hfill
	\begin{minipage}[b]{.45\textwidth}
		\vspace{-\baselineskip}
		\begin{equation}
			p_n = \frac{1}{2}\frac{\mathbb{G}(n-1)+\mathbb{G}(n+1)-2\mathbb{G}(n)}{\mathbb{G}(n-1)-\mathbb{G}(n+1)}. \label{eq:pGrel}
		\end{equation}
	\end{minipage}
\end{center}
The first identity follows from \cite[(2.1)]{MR2644879} and the elementary identity $\mathbb{H}(n+1,n)=\mathbb{H}(n+1)/\mathbb{H}(n)$, where $\mathbb{H}(n)$ is the probability that $(X_m)$ will hit $0$ when $X_0=n$, and the second identity follows from \cite[(2.2)]{MR2644879} together with the first.

\medskip

Plugging \eqref{eq:DGrel} into $\sum_{n=2}^\infty (D(n)\log n)^{-1}$, we observe that their summation criterion is roughly related to our integral condition by a change of variables. We prove Proposition \ref{prop:sharpness} by formalising this relationship for a walk whose Green's function is an appropriate transformation of the input function $\Phi$.  We then conclude by proving a very weak lower bound on the displacement of the walk from $0$.
\begin{proof}[Proof of Proposition \ref{prop:sharpness}]
	Let $f$ be the decreasing, log-convex function $f(x):=e^{-\sqrt{\log(x+2)}}$, and let $M\geq 2$ be the smallest integer such that the restriction of $\Phi$ to $[M,\infty)$ is log-convex.	We begin by defining the function $\widetilde{\Phi}:[0,\infty)\rightarrow(0,\infty)$ by
	\[
	\widetilde{\Phi}(x)=\Phi((x+M)^4)f(x),
	\]
	and noting some its properties. First, observe that $\widetilde{\Phi}\leq\Phi$ is strictly positive, strictly decreasing, log-convex and differentiable. Moreover, since $\widetilde{\Phi}(x)\leq\Phi((x+M)^4)\wedge f(x)$, we also have that $\widetilde{\Phi}^{-1}(x)\geq(\Phi^{-1}(x)^{1/4}-M)\vee f^{-1}(x)$, and hence that there exists a $C<\infty$ finite such that
	\begin{align} \label{eq:stillfinite}
		\int_0^{1} \frac{1}{u (1\vee \log \widetilde{\Phi}^{-1}(u))}\ \mathrm{d}u
        &\leq\int_0^{1} \frac{1}{u (1\vee \log [(\Phi^{-1}(x)^{1/4}-M)\vee f^{-1}(x)])}\ \mathrm{d}u\nonumber \\
		 &= \int_0^{1}\min\left\{ \frac{1}{u (1\vee \log f^{-1}(x))},\,  \frac{1}{u (1\vee \log [(\Phi^{-1}(x)^{1/4}-M)])}\right
	\}\  \mathrm{d}u \nonumber\\
	 &\leq C+C	\int_0^{1} \frac{1}{u (1\vee \log \Phi^{-1}(x))} \mathrm{d}u<\infty,
	\end{align}
	where for functions $F\in\{\widetilde{\Phi},f\}$, we use the convention that $1 \vee \log F^{-1}(u) =1$ when $F^{-1}(u)$ is not defined.
    We also note that the logarithmic derivative $(\log\widetilde{\Phi})^\prime$ of $\widetilde{\Phi}$, which is increasing by log-convexity of $\widetilde{\Phi}$, satisfies the inequality
	\begin{equation} \label{eq:loweblog}
		-\frac{\mathrm{d}}{\mathrm{d}x} \log \widetilde{\Phi}(x) \geq -\frac{\mathrm{d}}{\mathrm{d}x} \log f(x) =
        \frac{1}{2(x+2)\sqrt{\log (x+2)}}.
	\end{equation}
	We now use the function $\widetilde{\Phi}$ to define a Markov chain satisfying the desired properties. For $i\geq 1$, we define 
	\begin{equation}
    \label{eq:pphi_def}
	p_i = \frac{1}{2} \frac{\widetilde{\Phi}(n-1)+\widetilde{\Phi}(n+1)-2\widetilde{\Phi}(n)}{\widetilde{\Phi}(n-1)-\widetilde{\Phi}(n+1)},
	\end{equation}
	which is non-negative since $\widetilde \Phi$ is convex and strictly less than $1/2$ since $\widetilde{\Phi}$ is strictly decreasing.
     We can therefore define a nearest-neighbour random walk $(X_n)_{n\geq 0}$ on the integers with $X_0=0$ and with transition probabilities
	\[
	\pr(X_{n+1}=i+1\mid X_n=i)=1-\pr(X_{n+1}=i-1\mid X_n=i)=\frac{1}{2}+p_i\qquad \text{for } i\geq1,
	\] and $\pr(X_{n+1}=1\mid X_n=0)=1$.
	Comparing \eqref{eq:pphi_def} and \eqref{eq:pGrel}, it follows by induction on $n$ that the Green's function of this Markov chain is given by
	\[\mathbb{G}(n)=C\widetilde{\Phi}(n)\qquad \text{for }n\geq 0,\]
	for some constant $C=\mathbb{G}(0)/\widetilde{\Phi}(0)$ independent of $n$.
	Therefore, to complete the proof, it suffices to show that 
	\begin{equation} \label{eq:fin}
		\limsup_{n\to\infty}\frac{\widetilde{\Phi}(X_n)}{\Phi(n)} \leq	\limsup_{n\to\infty} \frac{\Phi((X_n+M)^4)}{\Phi(n)} < \infty \quad \text{ a.s.} 
	\end{equation}
	and that $X$ has at most finitely many cut times almost surely.

	We first apply \cref{thm:import} to prove that $X$ has finitely many cut times almost surely. Let $N\geq3$ be large enough such that $\Phi(N-1)<f(2)$. We can calculate
	\begin{multline*}
		\sum_{n=N}^\infty \frac{1}{D(n)\log n}=\sum_{n=N}^\infty\frac{\widetilde{\Phi}(n)-\widetilde{\Phi}(n+1)}{\widetilde{\Phi}(n)\log n} \\ \leq\sum_{n=N}^\infty\frac{-\widetilde{\Phi}^\prime(n)}{\widetilde{\Phi}(n)\log n}\leq\int_{N-1}^\infty\frac{-\widetilde{\Phi}^\prime(x)}{\widetilde{\Phi}(x)\log x}\ \mathrm{d} x\leq \int_{0}^{\Phi(N-1)} \frac{\mathrm{d} u}{u\log\widetilde{\Phi}^{-1}(u)}<\infty,
	\end{multline*}
	where the first equality follows from \eqref{eq:DGrel}, the first inequality is by convexity, the second follows by integral comparison as  $(\mathrm{d}/\mathrm{d}x)[\log\widetilde{\Phi}(x)]$ is increasing, the third follows by the substitution $u=\widetilde{\Phi}(x)$ and the inequality $\widetilde{\Phi}\leq\Phi$, and the fourth follows from \eqref{eq:stillfinite}. The claim then follows from Theorem \ref{thm:import}.

	Finally, we prove \eqref{eq:fin}. As $\widetilde{\Phi}$ is decreasing, it suffices to show that
	\begin{equation}\label{eq:speedcon}
	 \inf_{m\geq n} X_m \geq n^{1/2-o(1)} \quad \text{a.s.\ as $n\to\infty$ } \qquad \text{ and hence that } \qquad	\liminf_{n\rightarrow\infty} \frac{X_n}{n^{1/4}}>1 \quad \text{a.s.}
	\end{equation}
    Since $\widetilde{\Phi}$ is log-convex, $\widetilde{\Phi}(m+1) \geq \widetilde{\Phi}(m) \widetilde{\Phi}(1)/\widetilde{\Phi}(0)$ for every $m\geq 1$.
	For each $m\geq 0$, define $H_m=\abs{\{n\geq 0:X_n=m\}}$. By \cite[Lemma B]{MR2644879}, for each $m\geq 0$, $H_m$ is a geometric random variable with parameter
	\begin{multline*}1/\mu_m:=\frac{1+2p_m}{2}\cdot \frac{\widetilde{\Phi}(m)-\widetilde{\Phi}(m+1)}{\widetilde{\Phi}(m)}\geq \frac{1}{2}\frac{\widetilde{\Phi}(m)-\widetilde{\Phi}(m+1)}{\widetilde{\Phi}(m)} \\ \geq\frac{-\widetilde{\Phi}^\prime(m+1)}{2\widetilde{\Phi}(m)}\geq -\frac{\widetilde{\Phi}(1)\widetilde{\Phi}^\prime(m+1)}{2\widetilde{\Phi}(0)\widetilde{\Phi}(m+1)}\geq \frac{\widetilde{\Phi}(1)}{4\widetilde{\Phi}(0)(m+2)\sqrt{\log (m+2)}},
	\end{multline*}
	where the second inequality follows by convexity of $\widetilde{\Phi}$ and the final inequality follows from \eqref{eq:loweblog}. Since each $H_m$ is a geometric random variable with mean of order $m^{1+o(1)}$, it follows by an elementary application of Borel-Cantelli that $\max_{m\leq n} H_m = n^{1+o(1)}$ almost surely as $n\to\infty$, and hence that $\max_{m\leq n} X_m \geq n^{1/2-o(1)}$ almost surely as $n\to\infty$. On the other hand, letting $\tau_n$ be the hitting time of $n$ for each $n\geq 1$, we have by optional stopping that
    \[
\mathbb{P}(\text{$X$ visits $m$ after $\tau_n$}) \leq \frac{\mathbb{G}(n)}{\mathbb{G}(m)} =  \frac{\widetilde{\Phi}(n)}{\widetilde{\Phi}(m)} \leq \frac{f(n)}{f(m)}
    \]
    for each $0\leq m \leq n$. Since $f(2^k)/f(\lfloor 2^{(1-\varepsilon)k}\rfloor)$ is superpolynomially small in $k$ for each fixed $\varepsilon>0$, we deduce by a further simple Borel-Cantelli argument that $\inf_{m\geq n} X_m = (\max_{m\leq n} X_m)^{1-o(1)} \geq n^{1/2-o(1)}$ almost surely as $n\to\infty$, completing the proof.
\end{proof}

\subsection*{Acknowledgements}
TH thanks his former advisor Asaf Nachmias for introducing him to the problem in 2013. NH has been supported by the doctoral training centre, Cambridge Mathematics of Information (CMI).

\setstretch{1}
\footnotesize{
	\bibliographystyle{abbrv}
	\bibliography{bibliography}
}

\end{document}